\title{Two Minicourses on Analytic Microlocal Analysis}
\author{Michael Hitrik \\\small Department of Mathematics \\\small University of California
\\\small Los Angeles \\\small CA 90095-1555, USA\\\small hitrik@math.ucla.edu \and
Johannes Sj\"ostrand\\\small IMB, Universit\'e de Bourgogne \\\small 9 avenue Alain Savary BP 47870 \\\small 21078 Dijon Cedex, France
\\\small and UMR 5584 CNRS \\\small johannes.sjostrand@u-bourgogne.fr}
\date{}
\def\wrtext#1{\relax\ifmmode{\leavevmode\hbox{#1}}\else{#1}\fi}
\def\abs#1{\left|#1\right|}
\def\begeq{\begin{equation}}
\def\endeq{\end{equation}}
\def\iint{\int\hskip -2mm\int}
\def\Re{{\rm Re\,}}
\def\Im{{\rm Im\,}}
\def\part#1{\frac{\partial}{\partial #1}}
\def\norm#1{||\,#1\,||}
\newcommand{\real}{\mbox{\bf R}}
\newcommand{\comp}{\mbox{\bf C}}
\renewcommand{\Re}{\mbox{\rm Re\,}}
\renewcommand{\Im}{\mbox{\rm Im\,}}
\renewcommand{\exp}{\mbox{\rm exp\,}}
\newtheorem{dref}{Definition}[section]
\newtheorem{lemma}[dref]{Lemma}
\newtheorem{theo}[dref]{Theorem}
\newtheorem{prop}[dref]{Proposition}
\newtheorem{example}[dref]{Example}
\newenvironment{proof}{\vspace{.3cm}\noindent{{\em Proof:}}}{\hfill$\Box$}
\begin{document}

\maketitle
\pagestyle{plain}

\vspace*{1cm}
\noindent
{\bf Abstract}: These notes correspond roughly to the two minicourses prepared by the authors for the workshop on Analytic Microlocal Analysis, held at Northwestern
University in May 2013. The first part of the text gives an elementary introduction to some global aspects of the theory of metaplectic FBI transforms, while the
second part develops the general techniques of the analytic microlocal analysis in exponentially weighted spaces of holomorphic functions.

\bigskip
\noindent

{\begin{center}
\large{In memory of Lars G\aa{}rding and Lars H\"ormander}
\end{center}}

\tableofcontents

\chapter{Introduction to metaplectic FBI transforms}

\section{Introduction}
The metaplectic Fourier-Bros-Iagolnitzer (FBI) transform allows one to pass from the standard Hilbert space $L^2(\real^n)$ to an exponentially weighted space
of holomorphic functions on $\comp^n$. Such transforms occur under various other names in the literature, such as the Bargmann, Segal, Gabor, and
wave packet transforms, and from the general point of view of microlocal analysis, these can all be viewed as Fourier integral operators with complex phase.
In this part of the text, the connection to analytic microlocal analysis will be emphasized, and we shall therefore refer to these transforms as FBI transforms,
as they were used by J. Bros and D. Iagolnitzer to give a definition of the analytic wave front set. 
Pseudodifferential operators can be transported to the FBI transform side, and in this way, one obtains some flexible and
powerful techniques for their analysis, particularly in the analytic case. In this chapter, we give an elementary introduction to the theory of
metaplectic FBI transforms. In Section 1.2 we discuss aspects of the geometry of positive  complex Lagrangian planes and some closely related complex canonical
transformations, following Appendix A of~\cite{CaGrHiSj} and Chapter 11 of~\cite{SjAst}. In Section 1.3, following~\cite{Sj95},~\cite{Sj02}, we introduce
metaplectic FBI transforms, derive a representation for the Bergman projection and establish the unitarity of the FBI transform between $L^2(\real^n)$
and a suitable weighted space of holomorphic functions on $\comp^n$. See also~\cite{H91},~\cite{Zworski}. Section 1.4 is concerned with pseudodifferential
operators on the FBI transform side. We discuss their mapping properties and prove the metaplectic Egorov theorem, finishing with a brief discussion of the
case of pseudodifferential operators with holomorphic symbols. Our presentation here follows~\cite{Sj95} and~\cite{Sj02} closely.

\medskip
\noindent
{\bf Acknowledgement}. The first author would like to thank Michael Hall for providing him with some notes which were used in the preparation of the present text.

\setcounter{equation}{0}
\section{Complex symplectic linear algebra. Positivity}
\setcounter{equation}{0}
We shall work in the complex space $\comp^{2n} = \comp^n_x \times \comp^n_{\xi}$, which is equipped with the complex symplectic (2,0)-form
\begeq
\label{eq1.1}
\sigma = \sum_{j=1}^n d\xi_j \wedge dx_j,\quad (x,\xi)\in \comp^{2n}.
\endeq
The form $\sigma$ is non-degenerate and closed, and we can write
\begeq
\label{eq1.2}
\sigma(X,Y) = JX\cdot Y, \quad J = \begin{pmatrix} 0 & 1 \\ -1 & 0 \end{pmatrix}, \quad X,Y\in \comp^{2n}.
\endeq
Here and in what follows we shall use the complex bilinear scalar product on $\comp^k$, given by $X\cdot Y = \sum_{j=1}^k X_j Y_j$.

\medskip
\noindent
The corresponding real 2-forms
\begeq
\label{eq1.3}
\operatorname{Re\,}\sigma = \frac{\sigma + \bar \sigma}{2}, \quad \operatorname{Im\,}\sigma = \frac{\sigma - \bar \sigma}{2i}.
\endeq
are closed and non-degenerate, and hence give rise to real symplectic structures on $\comp^{2n}$.

\begin{dref}
A complex linear map $\kappa: \comp^{2n} \to \comp^{2n}$ is called a complex canonical transformation if
\begeq
\label{eq1.4}
\sigma(\kappa(X),\kappa(Y)) = \sigma(X,Y),\quad X,Y\in \comp^{2n}.
\endeq
\end{dref}

\medskip
\noindent
If $\kappa: \comp^{2n} \rightarrow \comp^{2n}$ is a complex canonical transformation, then $\kappa$ preserves the complex volume form $\sigma^n/n!$
on $\comp^{2n}$, and therefore ${\rm det}\, \kappa = 1$. If $n=1$, the converse is also true.

\medskip
\noindent
Let us consider the following configuration: Let $\Sigma \subseteq \comp^{2n}$ be a real subspace which is {\it I-Lagrangian} in the sense that
$\operatorname{dim}_\mathbf{R} \Sigma = 2n$ and $\operatorname{Im\,}\sigma|_\Sigma = 0$. Assume also that $\Sigma$ is {\it R-symplectic}:
$\operatorname{Re\,}\sigma|_\Sigma$ is non-degenerate. Such a subspace is automatically maximally totally real, $\Sigma \cap i\Sigma = \{0\}$, and we can write
$$
\comp^{2n} = \Sigma \oplus i\Sigma.
$$
Let $\Gamma = \Gamma_\Sigma: \comp^{2n} \to \comp^{2n}$ be the unique antilinear map such that $\Gamma|_\Sigma = 1$. Clearly, we have
\begeq
\label{eq1.41}
\sigma(\Gamma X, \Gamma Y) = \overline{\sigma(X,Y)},\quad X,Y\in \comp^{2n}.
\endeq

\bigskip
\noindent
{\bf Examples}.

\begin{enumerate}
\item $\Sigma = \real^{2n}$, $\Gamma X = \bar{X}$, the complex conjugation.
\item Let $\Phi$ be a real valued quadratic form on $\comp^n_x$, such that the Levi matrix,
$\partial_{\bar x} \partial_x \Phi = (\partial_{\bar x_j}\partial_{x_k}\Phi)_{j,k=1}^n$, is non-degenerate.
\end{enumerate}

Let us set
\begeq
\label{eq1.5}
\Sigma = \Lambda_{\Phi} :=  \left\{ \left(x,\frac{2}{i} \frac{\partial \Phi}{\partial x}(x) \right);\,\, x\in \comp^n \right\}.
\endeq
We claim that the linear subspace $\Sigma$ is I-Lagrangian and R-symplectic. Indeed, using $x\in \comp^n$ to parametrize $\Lambda_{\Phi}$,
we get
\begeq
\label{eq1.6}
\sigma |_{\Lambda_{\Phi}} = \sum_{k=1}^n d\left( \frac{2}{i} \frac{\partial \Phi}{\partial x_k} \right)\wedge dx_k =
\sum_{j,k=1}^n \frac{2}{i}\frac{\partial^2 \Phi}{\partial \bar x_j \partial x_k}d\bar x_j \wedge dx_k.
\endeq
Using only the fact that $\Phi$ is real, we see that $\sigma |_{\Lambda_{\Phi}}$ is real, so that $\Lambda_{\Phi}$ is I-Lagrangian. Since the Levi form of $\Phi$
is non-degenerate, (\ref{eq1.6}) also shows that $\sigma |_{\Lambda_{\Phi}}$ is non-degenerate.

\medskip
\noindent
Let us now describe the involution $\Gamma|_{\Lambda_\Phi}$ explicitly. We have
\begeq
\label{eq1.7}
\Phi(x) = \frac{1}{2} \Phi''_{xx} x\cdot x + \Phi''_{\bar{x} x}x\cdot \bar{x} +
\frac{1}{2} \Phi''_{\bar{x} \bar{x}}\bar{x}\cdot \bar{x},
\endeq
and therefore,
\begeq
\label{eq1.8}
\Lambda_{\Phi} =  \left\{ \left(x,\frac{2}{i} \left(\Phi''_{x x}x + \Phi''_{x \bar x}\bar x \right)\right);\,\, x\in \comp^n \right\}.
\endeq
Using that $\Gamma_{\Lambda_{\Phi}}(X+iY) = X-iY$, $X,Y\in \Lambda_{\Phi}$, we see that $\Gamma = \Gamma_{\Lambda_{\Phi}}$ is given by
\begeq
\label{eq1.9}
\left(y, \frac{2}{i} \left(\Phi''_{xx} y + \Phi''_{x \bar x}\bar x\right) \right) \mapsto
\left(x, \frac{2}{i}\left(\Phi''_{xx} x + \Phi''_{x \bar x}\bar y\right) \right)
\endeq
Notice that the map (\ref{eq1.9}) is well-defined since ${\rm det}\, \left(\Phi''_{\bar x x}\right) \neq 0$.

\bigskip
\noindent
Now let $\Lambda\subseteq \comp^{2n}$ be a $\comp$-Lagrangian subspace, i.e. a complex linear subspace such that $\operatorname{dim}_\mathbf{C} \Lambda = n$ and
$\sigma|_\Lambda = 0$. If $\Sigma \subseteq \comp^{2n}$ is I-Lagrangian, R-symplectic as above, with the associated involution $\Gamma$, we can introduce the
Hermitian form
\begeq
\label{eq1.10}
b(X,Y) = \frac{1}{i}\sigma (X,\Gamma Y),\quad (X,Y) \in \Lambda \times \Lambda.
\endeq
Here the Hermitian property, $\overline{b(X,Y)} = b(Y,X)$, follows from (\ref{eq1.41}).

\medskip
\noindent
{\it Remark}. When $\Sigma = \real^{2n}$, the Hermitian form (\ref{eq1.10}) was introduced in~\cite{H71}. The general case was considered in~\cite{SjAst}.

\medskip
\noindent
\begin{prop}
\label{prop1.1}
The form $b$ is non-degenerate if and only if the subspaces $\Lambda$ and $\Sigma$ are transversal, i.e. $\Lambda \cap \Sigma = \{0\}$.
\end{prop}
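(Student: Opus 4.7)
The plan is to translate the non-degeneracy of $b$ into a statement about the intersection of $\Lambda$ with an auxiliary complex Lagrangian, namely $\Gamma(\Lambda)$, and then to show that this intersection is non-trivial exactly when $\Lambda \cap \Sigma \neq \{0\}$.

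First I would unpack the definition: $b$ is degenerate precisely when there exists $X \in \Lambda \setminus \{0\}$ with $\sigma(X, \Gamma Y) = 0$ for every $Y \in \Lambda$, i.e.\ $X$ is $\sigma$-orthogonal to $\Gamma(\Lambda)$. The first technical observation is that $\Gamma(\Lambda)$ is itself a $\comp$-Lagrangian subspace. Indeed, $\Gamma$ is antilinear, so $\Gamma(\Lambda)$ is a complex subspace of complex dimension $n$, and the identity (\ref{eq1.41}) immediately gives $\sigma(\Gamma X, \Gamma Y) = \overline{\sigma(X,Y)} = 0$ for $X, Y \in \Lambda$. Since a $\comp$-Lagrangian subspace equals its own $\sigma$-orthogonal, the condition on $X$ becomes $X \in \Gamma(\Lambda)$, so
\[
b \text{ is degenerate} \iff \Lambda \cap \Gamma(\Lambda) \neq \{0\}.
\]

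The remaining step is to identify $\Lambda \cap \Gamma(\Lambda)$ with something built from $\Lambda \cap \Sigma$. Using the splitting $\comp^{2n} = \Sigma \oplus i\Sigma$ and writing $Z = U + iV$ with $U,V \in \Sigma$, one has $\Gamma Z = U - iV$. Thus $Z \in \Lambda \cap \Gamma(\Lambda)$ forces both $Z$ and $\Gamma Z$ to lie in $\Lambda$, and hence (by taking half-sum and half-difference) $U, iV \in \Lambda$. Since $\Lambda$ is a complex subspace, $iV \in \Lambda$ is equivalent to $V \in \Lambda$, so both $U$ and $V$ belong to $\Lambda \cap \Sigma$. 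Conversely, any non-zero $U \in \Lambda \cap \Sigma$ satisfies $\Gamma U = U$, hence lies in $\Lambda \cap \Gamma(\Lambda)$. Combining the two equivalences yields the claim.

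I do not expect a serious obstacle here; the one point worth checking carefully is that $\Gamma(\Lambda)$ is genuinely a complex subspace (despite $\Gamma$ being antilinear) and that it remains Lagrangian, both of which follow at once from antilinearity and (\ref{eq1.41}). The rest is linear algebra in the decomposition $\comp^{2n} = \Sigma \oplus i\Sigma$.
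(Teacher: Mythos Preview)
Your proof is correct and follows essentially the same route as the paper's: the paper works with $X$ in the radical of $b$, observes that $\Gamma X\in\Lambda$ (equivalently your $X\in\Gamma(\Lambda)$, since $\Gamma$ is an involution), and then takes $\tfrac{1}{2}(X+\Gamma X)$ and $\tfrac{1}{2i}(X-\Gamma X)$ to land in $\Lambda\cap\Sigma$, exactly your half-sum/half-difference step. The only cosmetic difference is that you phrase the intermediate condition as $\Lambda\cap\Gamma(\Lambda)\neq\{0\}$ and explicitly record that $\Gamma(\Lambda)$ is $\comp$-Lagrangian, whereas the paper uses the equivalent formulation $\Gamma X\in\Lambda^{\sigma}=\Lambda$ directly.
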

\begin{proof}
Consider the radical of $b$,
$$
{\rm Rad}\,(b) = \{X\in \Lambda; b(X,Y) = 0\,\,\wrtext{for all}\,\,\, Y\in \Lambda\}.
$$
If $0\neq X\in {\rm Rad}\,(b)$, then $\sigma(\Gamma X,Y) = 0$ for all $Y\in \Lambda$, and therefore, $\Gamma X \in \Lambda$, since $\Lambda$ is Lagrangian. We see,
using the fact that $\Gamma$ is an antilinear involution, that the vectors $(1/2)\left(X + \Gamma X\right)$ and $(1/2i)\left(X - \Gamma X\right)$ both belong to
$\Lambda \cap \Sigma$, and at least one of them is $\neq 0$, so that $\Lambda \cap \Sigma \neq \{0\}$. Conversely, $\Lambda \cap \Sigma \subseteq {\rm Rad}\, (b)$,
and the result follows.
\end{proof}

\bigskip
\noindent
\begin{example} \end{example} Let $\Sigma = \real^{2n}$ and assume that $\Lambda$ is transversal to the fiber $F = \left\{(0,\xi);\,\,\, \xi\in \comp^{n} \right\}$,
$\Lambda \cap F = \{0\}$. Then necessarily, $\Lambda= \Lambda_\varphi$ is of the form $\xi = \varphi'(x) = \varphi'' x$, where $\varphi$ is a holomorphic
quadratic form on $\comp^n_x$. We can compute the form $b$ explicitly using this representation of $\Lambda$. When $X = (x,\varphi'' x)\in \Lambda$, we get,
using (\ref{eq1.10}),
\begeq
\label{eq1.11}
\frac{1}{2}b(X,X) = \left(\operatorname{Im\,}\varphi''\right) x\cdot \bar x.
\endeq
Here
$$
{\rm Im}\, \varphi'' = \frac{1}{2i}\left(\varphi'' - \left(\varphi''\right)^*\right).
$$

\medskip
\noindent
\begin{dref}
Let $\Lambda\subseteq \comp^{2n}$ be $\comp$-Lagrangian and let $\Sigma \subseteq \comp^{2n}$ be I-Lagrangian, R-symplectic, with the involution $\Gamma$.
We say that $\Lambda$ is \emph{$\Sigma$-positive (negative)} if the Hermitian form $b$ is positive definite (negative definite) on $\Lambda$.
\end{dref}

\medskip
\noindent
\begin{prop}
Let $\Sigma = \real^{2n}$. Then $\Lambda$ is $\Sigma$--positive if and only if $\Lambda = \Lambda_\varphi$, where $\operatorname{Im\,}\varphi'' > 0$.
\end{prop}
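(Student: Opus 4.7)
The plan is to exploit the explicit formula (\ref{eq1.11}) from the preceding example, which already computes $b$ on any $\comp$-Lagrangian $\Lambda$ transversal to the fiber $F = \{(0,\xi);\,\xi\in\comp^n\}$. So the argument splits naturally into an easy ``if'' direction and an ``only if'' direction, where the only real content is to rule out the non-transversal case.

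For the ``if'' direction, I would start by observing that when $\varphi$ is a holomorphic quadratic form on $\comp^n_x$, the graph $\Lambda_\varphi = \{(x,\varphi''x);\,x\in\comp^n\}$ is automatically $\comp$-Lagrangian (the Lagrangian condition is equivalent to $\varphi''$ being symmetric, which holds since $\varphi''$ is a Hessian) and transversal to $F$ by construction. Then (\ref{eq1.11}) gives $\frac{1}{2}b(X,X) = (\operatorname{Im}\varphi'')x\cdot\bar x$ for $X=(x,\varphi''x)$, and since $X\mapsto x$ is a bijection $\Lambda_\varphi\to\comp^n$, positive definiteness of the Hermitian matrix $\operatorname{Im}\varphi''$ on $\comp^n$ is equivalent to positive definiteness of $b$ on $\Lambda_\varphi$, i.e.\ $\Sigma$-positivity of $\Lambda_\varphi$.

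For the ``only if'' direction, the one step that needs an argument is that any $\Sigma$-positive $\Lambda$ must be transversal to $F$; once this is established we may write $\Lambda = \Lambda_\varphi$ as above and conclude by (\ref{eq1.11}). Suppose to the contrary that there exists $0\neq X=(0,\xi)\in\Lambda\cap F$. Since $\Sigma=\real^{2n}$, the involution $\Gamma$ is complex conjugation, so $\Gamma X = (0,\bar\xi)\in F$. A direct computation from (\ref{eq1.2}) gives $\sigma(X,\Gamma X) = \xi\cdot 0 - 0\cdot\bar\xi = 0$, hence $b(X,X) = 0$, contradicting positive definiteness on $\Lambda$. Therefore $\Lambda\cap F = \{0\}$ and $\Lambda = \Lambda_\varphi$ for a suitable $\varphi$; applying (\ref{eq1.11}) again we get $\operatorname{Im}\varphi'' > 0$.

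I do not expect a serious obstacle here: the proposition is essentially a repackaging of the example, and the only content is the trivial observation that $b$ vanishes identically on $F\cap\Lambda$, which forces transversality to $F$. One small care point is that the claim $\Lambda\cap\Sigma=\{0\}$ coming from Proposition~\ref{prop1.1} (non-degeneracy of $b$) is weaker than the transversality to $F$ that we actually need to parametrize $\Lambda$ as a graph over $x$, so the transversality argument cannot be outsourced to Proposition~\ref{prop1.1} and has to be done by hand as above.
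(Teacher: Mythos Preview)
Your proposal is correct and follows the same approach as the paper: both directions hinge on formula (\ref{eq1.11}), and the only substantive step is showing that a $\Sigma$-positive $\Lambda$ is transversal to the fiber $F$. The paper's proof simply asserts this transversality without justification, whereas you supply the explicit computation $b(X,X)=0$ for $X\in F$; your version is thus a fuller rendering of the same argument, and your closing remark that Proposition~\ref{prop1.1} alone does not give transversality to $F$ is a correct observation.
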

\begin{proof}
If $\Lambda = \Lambda_{\varphi}$ with $\operatorname{Im\,}\varphi'' > 0$, then in view of (\ref{eq1.11}), we see that $\Lambda$ is $\Sigma$--positive. Conversely, if
$\Lambda$ is $\Sigma$-positive, then $\Lambda$ is transversal to the fiber $F$, so that $\Lambda = \Lambda_{\varphi}$, and Example 1.2.3 applies again.
\end{proof}

\begin{prop}
The set $\{\Lambda\subseteq \comp^{2n};\,\, \Lambda\text{ is }\comp-\text{Lagrangian and }\Lambda\text{ is }\Sigma-\text{positive}\}$ is a connected component
in the set of all $\comp$-Lagrangian spaces that are transversal to $\Sigma$.
\end{prop}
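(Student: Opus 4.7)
The plan is to show that the set $P$ of $\Sigma$-positive $\comp$-Lagrangians is non-empty, open and closed in the set $T$ of $\comp$-Lagrangians transversal to $\Sigma$, and connected; these four properties together imply that $P$ is a single connected component of $T$.

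For openness and closedness, note that by Proposition~\ref{prop1.1}, on $T$ the Hermitian form $b_\Lambda$ is non-degenerate. Locally near any $\Lambda_0 \in T$, the Grassmannian of $\comp$-Lagrangians is a smooth manifold carrying a tautological bundle, so one can choose a continuously varying frame $X_1(\Lambda), \ldots, X_n(\Lambda)$ of $\Lambda$. The Hermitian matrix $\bigl(b_\Lambda(X_i(\Lambda), X_j(\Lambda))\bigr)$ then depends continuously on $\Lambda$ and is non-degenerate, so its signature, and in particular its positive-definiteness, is locally constant on $T$. Hence $P$ is both open and closed in $T$.

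For connectedness and non-emptiness, I reduce to the case $\Sigma = \real^{2n}$. Since $\Sigma$ is I-Lagrangian, $\sigma|_\Sigma = \operatorname{Re}\sigma|_\Sigma$ is a real symplectic form on $\Sigma$; by the real Darboux theorem, pick a $\operatorname{Re}\sigma$-symplectic basis of $\Sigma$ and extend it $\comp$-linearly to obtain a $\sigma$-symplectic basis of $\comp^{2n}$. This yields a complex canonical transformation $\kappa$ with $\kappa(\real^{2n}) = \Sigma$, which one checks satisfies $\Gamma_\Sigma \circ \kappa = \kappa \circ \Gamma_{\real^{2n}}$ (both sides are antilinear and coincide on $\real^{2n}$). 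Hence $b_\Sigma(\kappa X, \kappa Y) = b_{\real^{2n}}(X, Y)$, so $\kappa$ restricts to a homeomorphism between the two positive sets.

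It therefore suffices to handle $\Sigma = \real^{2n}$. By the preceding proposition, $P$ is identified with the set of $\Lambda_\varphi$ where $\varphi''$ ranges over symmetric complex $n \times n$ matrices with $\operatorname{Im} \varphi'' > 0$. This Siegel upper half-space is a non-empty convex subset of the space of symmetric matrices, hence connected; the map $\varphi'' \mapsto \Lambda_\varphi$ is continuous, so $P$ is connected and non-empty, completing the proof. The main subtlety is the local frame construction supporting the signature argument, but the smoothness of the $\comp$-Lagrangian Grassmannian and its tautological bundle is standard, and the rest of the argument is essentially bookkeeping once one has the reduction to the model case.
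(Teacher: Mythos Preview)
Your proof is correct and follows essentially the same strategy as the paper: reduce to $\Sigma = \real^{2n}$ via a complex canonical transformation, invoke Proposition~1.2.5 to identify the positive set with the (convex, hence connected) Siegel upper half-space, and use Proposition~\ref{prop1.1} for non-degeneracy. The only minor difference is that you establish openness and closedness abstractly via local constancy of the signature of $b$ on $T$, whereas the paper argues these directly in the model case (openness from the explicit parametrization, closedness from the observation that positive semi-definiteness plus non-degeneracy forces positive definiteness).
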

\begin{proof}
After applying a suitable linear complex canonical transformation, we may assume that $\Sigma = \real^{2n}$. Proposition 1.2.5 shows then that the set of all
$\Sigma$-positive $\comp$-Lagrangian spaces is a connected (even convex) and open subset of the set of all $\comp$-Lagrangian spaces that are transversal to
$\Sigma$. It is also closed, for if $\Lambda$ is a $\comp$-Lagrangian space transversal to $\Sigma$, such that the form $b$ is positive semi-definite on
$\Lambda$, then $b$ is necessarily positive definite on $\Lambda$, in view of Proposition 1.2.2. We conclude that the set of all $\Sigma$-positive
$\comp$-Lagrangian spaces is a component in the set of all $\comp$-Lagrangian spaces that are transversal to $\Sigma$.
\end{proof}

\bigskip
\noindent
Let us return to the situation where $\Sigma = \Lambda_{\Phi}$, with $\Phi$ being a real quadratic form on $\comp^n_x$. Assume that the Levi form of
$\Phi$ is positive definite,
\begeq
\label{eq1.12}
\sum_{j,k=1}^n \frac{\partial^2 \Phi}{\partial \bar x_j \partial x_k} \bar \xi_j \xi_k > 0, \quad \forall 0 \neq \xi \in \comp^n,
\endeq
i.e. the quadratic form $\Phi$ is strictly pluri-subharmonic.

\begin{prop}
The fiber $F = \{(0,\eta); \,\,\, \eta\in \comp^n\}$ is $\Lambda_\Phi$-negative.
\end{prop}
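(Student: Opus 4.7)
The plan is to verify $\Lambda_{\Phi}$-negativity of $F$ by direct computation, using the explicit formula~(\ref{eq1.9}) for the involution $\Gamma = \Gamma_{\Lambda_{\Phi}}$. First, $F$ is transversal to $\Lambda_{\Phi}$: a point $(0, \eta) \in F \cap \Lambda_{\Phi}$ must arise in the parametrization~(\ref{eq1.8}) from $x = 0$, which immediately forces $\eta = 0$. Hence by Proposition~\ref{prop1.1}, the form $b$ is non-degenerate on $F$, and it remains to determine its sign.

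To evaluate $\Gamma(0, \eta)$ from~(\ref{eq1.9}), I set the left hand side equal to $(0, \eta)$, obtaining $y = 0$ and $\frac{2}{i}\Phi''_{x\bar{x}}\bar{x} = \eta$, so that $\bar{x} = \frac{i}{2}(\Phi''_{x\bar{x}})^{-1}\eta$ and hence $\Gamma(0, \eta) = (x, \frac{2}{i}\Phi''_{xx}x)$ with $x = -\frac{i}{2}\,\overline{(\Phi''_{x\bar{x}})^{-1}}\,\bar{\eta}$. Because the first component of $(0,\eta)$ vanishes, the symplectic pairing simplifies to $\sigma((0,\eta), \Gamma(0,\eta)) = \eta \cdot x$, and therefore
$$
b((0,\eta), (0,\eta)) = \frac{1}{i}\,\eta \cdot x = -\frac{1}{2}\,\eta \cdot \overline{(\Phi''_{x\bar{x}})^{-1}}\,\bar{\eta}.
$$

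To conclude, recall that $\Phi''_{x\bar{x}}$ is Hermitian (since $\Phi$ is real) and positive definite by~(\ref{eq1.12}), so the same holds for $H = (\Phi''_{x\bar{x}})^{-1}$. For any such $H$ the identity $\eta \cdot \overline{H}\,\bar{\eta} = \langle H\eta, \eta\rangle$ holds in the standard Hermitian inner product of $\comp^n$, since $\overline{H} = H^{T}$ when $H$ is Hermitian. Consequently $b((0,\eta), (0,\eta)) = -\frac{1}{2}\langle H\eta, \eta\rangle < 0$ for every $\eta \neq 0$, so that $F$ is $\Lambda_{\Phi}$-negative. The principal bookkeeping hazard is the antilinearity of $\Gamma$, which demands care in distinguishing $\Phi''_{x\bar{x}}$ from its complex conjugate when solving for $x$; once that is handled, the argument is mechanical.
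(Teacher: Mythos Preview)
Your proof is correct and follows essentially the same approach as the paper: both compute $\Gamma(0,\eta)$ from (\ref{eq1.9}), evaluate $\frac{1}{i}\sigma((0,\eta),\Gamma(0,\eta)) = \frac{1}{i}\eta\cdot x$, and conclude negativity from the positive definiteness of the Levi form. The only cosmetic difference is that the paper leaves the answer parametrized by $x$, writing $\frac{1}{i}\eta\cdot x = -2\,\Phi''_{x\bar x}\bar x\cdot x$, whereas you invert to express everything in terms of $\eta$ via $(\Phi''_{x\bar x})^{-1}$; your initial transversality check is an extra (harmless) step, since negative definiteness already gives non-degeneracy.
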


\begin{proof}
Using (\ref{eq1.9}) we see that $\Gamma(0,\eta) = (x,\xi)$, where $\xi = \frac{2}{i} \Phi''_{xx} x$, $\eta = \frac{2}{i}\Phi''_{x\bar x}\bar x$, which implies that
$$
\frac{1}{i} \sigma((0,\eta),(x,\xi)) = \frac{1}{i}\eta\cdot x = -2 \Phi''_{x\bar x}\bar x\cdot x \leq \frac{-1}{C}\abs{x}^2 \leq
-\frac{1}{\widetilde{C}}\abs{\eta}^2.
$$
\end{proof}

\medskip
\noindent
Now the space $\Gamma(F): \xi  = \frac{2}{i}\Phi''_{x x}x = \frac{1}{i}\partial_x\left( \Phi''_{xx}x\cdot x \right)$ is $\comp$-Lagrangian and
$\Lambda_\Phi$-positive. Let us write
$$
\Phi(x) = \Phi_{{\rm plh}}(x) + \Phi_{{\rm herm}}(x),
$$
where
$$
\Phi_{\rm plh}(x) = \operatorname{Re\,}\left(\Phi''_{xx}x\cdot x\right)
$$
is the pluri-harmonic part, and
$$
\Phi_{\rm herm}(x) = \Phi''_{\bar x x}x\cdot \bar x
$$
is the positive definite Hermitian part. Using that
$$
\partial_x \left(\Phi''_{xx} x\cdot x\right) = 2\partial_x \Phi_{{\rm plh}}(x),
$$
we conclude that $\Gamma(F)$ is of the form $\Lambda_{\Phi_{{\rm plh}}}$, where $\Phi(x) - \Phi_{\rm plh}(x)\sim \abs{x}^2$.

\begin{prop}
Assume that $\partial_{\bar x} \partial_x \Phi > 0$. A $\comp$-Lagrangian space $\Lambda$ is $\Lambda_\Phi$-positive if and only if
$\Lambda = \Lambda_{\widetilde \Phi}$, where $\widetilde \Phi$ is pluri-harmonic quadratic and $\Phi - \widetilde \Phi \sim \abs{x}^2$.
\end{prop}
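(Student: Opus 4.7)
The plan is to prove both directions of the biconditional by first reducing any $\Lambda_\Phi$-positive candidate $\Lambda$ to the form $\Lambda_{\widetilde\Phi}$ for some pluriharmonic $\widetilde\Phi$, then computing $b|_{\Lambda_{\widetilde\Phi}}$ explicitly and showing that its positivity coincides with that of $\Phi - \widetilde\Phi$.

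For the forward implication, suppose $\Lambda$ is $\Lambda_\Phi$-positive. I first claim $\Lambda \cap F = \{0\}$, where $F = \{(0, \eta) : \eta \in \comp^n\}$ is the fiber. If instead $0 \neq X \in \Lambda \cap F$, then $b(X, X)$ would be simultaneously strictly positive (from $\Lambda_\Phi$-positivity of $\Lambda$) and strictly negative (since $F$ is $\Lambda_\Phi$-negative by Proposition 1.2.7), which is impossible. Transversality of $\Lambda$ to $F$ gives $\Lambda = \{(x, Mx) : x \in \comp^n\}$ with $M$ complex linear, and the $\comp$-Lagrangian condition forces $M^T = M$. Writing $M = \frac{2}{i}\widetilde A$ with $\widetilde A$ complex symmetric, we obtain $\Lambda = \Lambda_{\widetilde\Phi}$ where $\widetilde\Phi(x) = \operatorname{Re}(\widetilde A x \cdot x)$; this $\widetilde\Phi$ is automatically real and pluriharmonic, since (1.6) shows $\sigma|_{\Lambda_{\widetilde\Phi}} = 0$ precisely when $\widetilde\Phi''_{\bar x x} = 0$.

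The main computational step is the explicit evaluation of $b$ on $\Lambda_{\widetilde\Phi}$. Set $A = \Phi''_{xx}$ and $B = \Phi''_{\bar x x}$ (Hermitian positive definite by hypothesis). For $X = (x, \tfrac{2}{i}\widetilde A x) \in \Lambda_{\widetilde\Phi}$, I decompose $X = Y + iZ$ with $Y, Z \in \Lambda_\Phi$ parametrized as in (1.8). Matching first components yields $y + iz = x$, and matching second components, together with the complex conjugate equation and the invertibility of $B$, produces $y - iz = B^{-1}\overline{(\widetilde A - A)}\,\bar x$. A direct computation of $\sigma(X, \Gamma X)$ with $\Gamma X = Y - iZ$ then gives
\[
\tfrac{1}{2}b(X,X) \;=\; \Phi_{\rm herm}(x) \;-\; x^T (\widetilde A - A)\, B^{-1}\, \overline{(\widetilde A - A)}\, \bar x,
\]
whereas the target quantity reads $(\Phi - \widetilde\Phi)(x) = \Phi_{\rm herm}(x) - \operatorname{Re}\bigl((\widetilde A - A)x\cdot x\bigr)$.

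Finally, I verify that these two real quadratic forms in $x$ are positive definite simultaneously. The key device is a Cholesky reduction to the case $B = I$: writing $B = L^* L$, substituting $y = Lx$, and setting $\widetilde C = L^{-T}(\widetilde A - A)L^{-1}$ (still complex symmetric), a short manipulation transforms $\tfrac{1}{2}b(X,X)$ into $|y|^2 - |\widetilde C y|^2$ and $(\Phi - \widetilde\Phi)(x)$ into $|y|^2 - \operatorname{Re}(\widetilde C y\cdot y)$. Using Takagi's decomposition $\widetilde C = U \Sigma U^T$ with $U$ unitary and $\Sigma = \operatorname{diag}(\sigma_1, \ldots, \sigma_n)$ with nonnegative entries, one sees that both forms are positive definite in $y$ if and only if $\sigma_1 < 1$. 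The principal obstacle is precisely this last verification—matching two a priori different positivity conditions—and the Cholesky reduction followed by Takagi's factorization is the technical step that makes the equivalence transparent, simultaneously yielding both directions of the Proposition.
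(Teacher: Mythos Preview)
Your argument is correct and provides a genuinely different proof from the paper's. The paper avoids computing $b$ explicitly: it shows that the set $\{\Lambda_{\widetilde\Phi}: \widetilde\Phi \text{ pluri-harmonic},\ \Phi-\widetilde\Phi>0\}$ is open, closed, and connected in the set of $\comp$-Lagrangian spaces transversal to $\Lambda_\Phi$, and observes (from the discussion preceding the proposition) that it contains the $\Lambda_\Phi$-positive element $\Lambda_{\Phi_{\rm plh}}$. An appeal to Proposition~1.2.6 then identifies this set with the connected component of $\Lambda_\Phi$-positive Lagrangians. By contrast, you carry out the computation of $\Gamma X$ and $b(X,X)$ directly, and then match the positivity of the two resulting quadratic forms via a Cholesky reduction and Takagi factorization. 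The paper's route is shorter and more conceptual, reusing the connectedness machinery already set up; your route is more explicit and has the advantage of producing a closed formula for $\tfrac12 b(X,X)$ on $\Lambda_{\widetilde\Phi}$, which could be useful if one wanted quantitative information rather than just the sign.
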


\begin{proof}
If $\widetilde{\Phi}$ is pluri-harmonic quadratic and $\Phi - \widetilde{\Phi} > 0$ then clearly, $\Lambda_{\widetilde{\Phi}}$ is $\comp$-Lagrangian and
transversal to $\Lambda_{\Phi}$. It follows that the set
$$
\{\Lambda_{\widetilde \Phi}; \,\, \widetilde\Phi \text{ pluri-harmonic },\,\, \Phi - \widetilde \Phi > 0\}
$$
is an open connected subset of the set of all $\comp$--Lagrangian spaces that are transversal to $\Lambda_{\Phi}$. It is also closed, for if $\widetilde{\Phi}$ is
pluri-harmonic, $\Phi - \widetilde{\Phi}\geq 0$, and $\Lambda_{\widetilde{\Phi}}$ is transversal to $\Lambda_{\Phi}$, then the quadratic form
$\Phi - \widetilde{\Phi}$ is necessarily positive definite. (The transversality forces a non-strict inequality to become strict.) It follows that the set
$\{\Lambda_{\widetilde \Phi}; \,\, \widetilde\Phi \text{ pluri-harmonic },\,\, \Phi - \widetilde \Phi > 0\}$ is a connected component of the set of all
$\comp$-Lagrangian spaces that are transversal to $\Lambda_\Phi$. It contains
$\Lambda_{\Phi_{\rm plh}}$, as we saw above, which is $\Lambda_\Phi$-positive. An application of Proposition 1.2.6 allows us to conclude the proof.
\end{proof}

\bigskip
\noindent
{\bf Example}. Let $\Sigma = \real^{2n}$, and let $\Lambda_\pm \subseteq \comp^{2n}$ be $\comp$-Lagrangian spaces such that $\Lambda_+$ is positive and $\Lambda_-$
is negative, with respect to $\Sigma$. Let us verify that there exists a holomorphic quadratic form $\varphi(x,y)$ on $\comp_x^n\times \comp^n_y$ such that
\begeq
\label{eq1.12.1}
{\rm det}\,\varphi''_{xy} \neq 0,\quad \operatorname{Im\,}\varphi''_{yy} > 0,
\endeq
and such that the complex linear canonical transformation
$$
\kappa_{\varphi}: \comp^{2n}\ni (y,-\varphi'_y(x,y)) \mapsto (x,\varphi'_x(x,y))\in \comp^{2n}
$$
satisfies
\begeq
\label{eq1.12.2}
\kappa_{\varphi}(\Lambda_+) = \{(x,0);\, x\in \comp^n\},
\endeq
and
\begeq
\label{eq1.12.3}
\kappa_{\varphi}(\Lambda_-) = \{(0,\xi);\, \xi\in \comp^n\}.
\endeq

\medskip
\noindent
When showing the existence of the quadratic form $\varphi(x,y)$, let us recall from Proposition 1.2.5 that $\Lambda_{\pm}$ has the form $\eta = F_{\pm}y$, where
$F_{\pm}$ is a complex symmetric matrix such that $\pm {\rm Im}\,F_{\pm} > 0$. Looking for $\varphi$ in the form
$$
\varphi(x,y) = \frac{1}{2} Ax\cdot x + Bx\cdot y + \frac{1}{2}Cy\cdot y,
$$
where the matrices $A$ and $C$ are symmetric and $B$ is bijective, we observe first that (\ref{eq1.12.3}) is equivalent to the fact that
$$
\kappa_{\varphi}^{-1}(\{(0,\xi);\, \xi\in \comp^n\}) = \{(y,-Cy);\, y\in \comp^n\} = \Lambda_-,
$$
so we must have
\begeq
\label{eq1.12.31}
C = -F_-.
\endeq
The second condition in (\ref{eq1.12.1}) is then satisfied, and we also see that
\begin{multline}
\label{eq1.12.4}
\kappa_{\varphi}^{-1}(\{(x,0);\,x\in \comp^n\}) = \{(y,-Bx -Cy);\, Ax + B^t y =0\} \\
= \{(-(B^t)^{-1} Ax, -Bx +C (B^t)^{-1} Ax)\}.
\end{multline}
In order to have (\ref{eq1.12.2}), the matrix $A$ should necessarily be bijective, and we assume that this is the case. Writing $y = -(B^{t})^{-1} Ax$,
$x = -A^{-1} B^t y$, we then get from (\ref{eq1.12.4}),
\begin{multline*}
\kappa_{\varphi}^{-1}(\{(x,0);\,x\in \comp^n\}) = \{(y, B A^{-1} B^t y - C (B^{t})^{-1} AA^{-1} B^t y)\} \\
= \{(y, \left(B A^{-1} B^t - C\right)y)\}.
\end{multline*}
The condition (\ref{eq1.12.2}) therefore holds precisely when
\begeq
\label{eq1.12.5}
B A^{-1} B^t - C = F_+.
\endeq
Using (\ref{eq1.12.31}), we may rewrite (\ref{eq1.12.5}) in the form
$$
B A^{-1} B^t = F_+ - F_-,
$$
and observe that the matrix $F_+ - F_-$ is invertible, since ${\rm Im}\,\left(F_+ - F_-\right) > 0$. It follows that $A^{-1} = B^{-1}(F_+ - F_-)(B^{t})^{-1}$, and
choosing the invertible symmetric matrix $A$ in the form
$$
A = B^t \left(F_+ - F_-\right)^{-1} B,
$$
we achieve (\ref{eq1.12.2}). The general solution to (\ref{eq1.12.2}), (\ref{eq1.12.3}), satisfying (\ref{eq1.12.1}), is therefore of the form
$$
\varphi(x,y) = \frac{1}{2} B^t \left(F_+ - F_-\right)^{-1} Bx\cdot x + Bx\cdot y - \frac{1}{2}F_-y\cdot y.
$$
Here $B$ is an arbitrary invertible matrix.

\section{Metaplectic FBI transforms and Bergman\\ kernels}
\setcounter{equation}{0}
Last time we discussed the geometry of complex Lagrangian planes in the complexified phase space and that motivated us to look at complex canonical
transformations of the form
$$
\kappa_{\varphi}: \comp^{2n}\ni (y,-\varphi'_y(x,y)) \mapsto (x,\varphi'_x(x,y))\in \comp^{2n}.
$$
Here $\varphi$ is a holomorphic quadratic form on $\comp^n_x \times \comp^n_y$ such that
\begeq
\label{eq2.1}
{\rm det}\, \varphi''_{xy} \neq 0,\quad \operatorname{Im\,}\varphi''_{yy} > 0.
\endeq

\begin{dref}
The metaplectic Fourier-Bros-Iagolnitzer (FBI) transform associated to the quadratic form $\varphi$ satisfying {\rm (\ref{eq2.1})} is the operator
\begeq
\label{eq2.2}
T: \mathcal{S}'(\real^n)\to \operatorname{Hol}(\comp^n),
\endeq
given by
\begeq
\label{eq2.3}
Tu(x;h) = Ch^{-\frac{3n}{4}}\int e^{i\varphi(x,y)/h}u(y)\, dy,\quad 0 < h\leq 1.
\endeq
\end{dref}

\medskip
\noindent
To understand the growth properties of the entire function $Tu$ in the complex domain, let us set
\begeq
\label{eq2.31}
\Phi(x) = \sup_{y\in \mathbf{R}^n} (-\operatorname{Im\,}\varphi(x,y)).
\endeq
Since ${\rm Im}\, \varphi''_{yy} > 0$, we see that the supremum in (\ref{eq2.31}) is achieved at a unique point $y(x)\in \real^n$, which is the unique
critical point of the function
$$
\real^n \ni y\mapsto -{\rm Im}\, \varphi(x,y).
$$
It follows that
\begeq
\label{eq2.4}
\Phi(x) = {\rm vc}_{y\in {\mathbf R}^n} \left(-{\rm Im}\, \varphi(x,y)\right) = -{\rm Im}\,\varphi(x,y(x)),
\endeq
and by Taylor's formula, we can write, for $y\in \real^n$,
$$
-{\rm Im}\, \varphi(x,y) = \Phi(x) - \frac{1}{2} {\rm Im}\,\varphi''_{yy}(y-y(x))\cdot (y-y(x)) \leq \Phi(x) - \frac{1}{C}\abs{y-y(x)}^2.
$$
It is therefore clear that for some $M>0$ depending on the order of the distribution $u$, we have
\begeq
|Tu(x;h)| \leq Ch^{-M}\left\langle x \right\rangle^M e^{\Phi(x)/h},\quad x\in \comp^n.
\endeq
We also observe that the quadratic form $\Phi(x) = \sup_{y\in {\mathbf R}^n}\left(-\operatorname{Im\,}\varphi(x,y)\right)$ is pluri-subharmonic,
being the supremum of a family of pluri-harmonic quadratic forms.

\medskip
\noindent
{\bf Example}. Let $\varphi(x,y) = \frac{i}{2}(x-y)^2$. Then $\Phi(x) = \frac{1}{2}(\operatorname{Im\,}x)^2$, and the canonical transformation
$\kappa_{\varphi}$ is given by
$$
\kappa_{\varphi}(y,\eta) = (y-i\eta,\eta).
$$

\medskip
\noindent
{\it Remark}. In microlocal analysis, microlocal properties of $u\in {\cal S}'(\real^n)$ near $(y,\eta)\in T^*\real^n\backslash\{0\}$ can be characterized using local properties of the
holomorphic function $Tu$ near $\pi_x\left(\kappa_{\varphi}(y,\eta)\right) \in \comp^n$. Here $\pi_x: \comp_{x,\xi}^{2n}\ni (x,\xi) \rightarrow x\in \comp^n$ is the
natural projection map. We refer to~\cite{SjAst} and to Section 2.6 of Chapter 2 of this text for further details. In this elementary discussion, we shall only be
concerned with global aspects of the metaplectic FBI transforms.

\bigskip
\noindent
The following proposition indicates that there is a dictionary between the real side and the FBI tranform side, where $\real^{2n}$ corresponds to the linear
manifold
\begeq
\label{eq2.5}
\Lambda_\Phi = \left\{ \left(x,\frac{2}{i} \frac{\partial \Phi}{\partial x}(x) \right);\,\, x\in \comp^n \right\} \subseteq \comp^{2n}.
\endeq

\begin{prop}
The complex canonical transformation
\begeq
\kappa_{\varphi}: \comp^{2n} \ni (y,-\varphi'_y(x,y)) \mapsto (x,\varphi'_x(x,y))\in \comp^{2n}
\endeq
maps $\real^{2n}$ bijectively onto $\Lambda_\Phi$. The quadratic form $\Phi$ introduced in {\rm (\ref{eq2.31})} is strictly pluri-subharmonic.
\end{prop}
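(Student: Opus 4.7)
The plan is to organize both assertions around the critical point $y(x)\in \real^n$ introduced just after (\ref{eq2.31}). For the mapping statement I will verify the inclusion $\kappa_\varphi(\real^{2n})\subseteq \Lambda_\Phi$; since $\kappa_\varphi$ is complex linear and invertible on $\comp^{2n}$ it is $\real$-linear and injective on $\real^{2n}$, and the equality of real dimensions $\dim_\real \real^{2n} = 2n = \dim_\real \Lambda_\Phi$ will then force surjectivity onto $\Lambda_\Phi$. For strict pluri-subharmonicity I will exploit the critical value formula (\ref{eq2.4}) to compute $\Phi$ in closed form and read off positivity of the Levi matrix directly.

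To establish the inclusion, I fix $(y,\eta)\in \real^{2n}$ and solve $\eta = -\varphi'_y(x,y)$ for $x\in \comp^n$, which is possible and unique since ${\rm det}\,\varphi''_{xy}\neq 0$. The reality of $\eta$ then amounts to $\Im \varphi'_y(x,y) = 0$, which is precisely the vanishing of the gradient in $y$ of the real-valued function $y\mapsto -\Im \varphi(x,y)$; hence $y = y(x)$. It remains to identify the second component $\varphi'_x(x,y(x))$ of $\kappa_\varphi(y,\eta)$ with $(2/i)\partial_x \Phi(x)$. Differentiating $\Phi(x) = -\Im \varphi(x,y(x))$ with $x$ and $\bar x$ treated as independent, the chain rule produces a holomorphic term $\varphi'_x(x,y(x))$ together with an implicit term $\varphi'_y(x,y(x))\cdot \partial_x y(x)$, while the anti-holomorphic contribution from $\overline{\varphi(x,y(x))} = \bar\varphi(\bar x, y(x))$ produces $\overline{\varphi'_y(x,y(x))}\cdot \partial_x y(x)$ (with the same complex Jacobian $\partial_x y(x)$, since $y(x)$ is real). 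The two implicit contributions cancel exactly because $\Im\varphi'_y(x,y(x)) = 0$, leaving $(2/i)\partial_x \Phi(x) = \varphi'_x(x,y(x))$ as desired.

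For strict pluri-subharmonicity, I would pick coordinates in which $\varphi(x,y) = \frac{1}{2}Ax\cdot x + Bx\cdot y + \frac{1}{2}Cy\cdot y$ with $A$, $C$ symmetric, $B$ invertible, and $\Im C > 0$; solving the critical point equation explicitly as $y(x) = -(\Im C)^{-1}\Im(Bx)$ and substituting into (\ref{eq2.4}) gives
$$
\Phi(x) = -\Im \left(\frac{1}{2}Ax\cdot x\right) + \frac{1}{2}\Im (Bx)\cdot (\Im C)^{-1}\Im (Bx).
$$
The first summand is pluri-harmonic and contributes nothing to the Levi matrix, while extracting the $(1,1)$-piece of the second summand in the variables $(x,\bar x)$ yields a Levi matrix proportional to $B^T(\Im C)^{-1}\bar B$. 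Since $(\Im C)^{-1}$ is real symmetric positive definite and $B$ is invertible, the induced Hermitian form on $\comp^n$ is positive definite. The step I expect to be the most delicate is the computation of $\partial_x \Phi$ in the previous paragraph, since $\Phi$ is only real-valued and $y(x)$ is defined implicitly through a condition that mixes $x$ and $\bar x$; one must carefully separate $\partial/\partial x_j$ from $\partial/\partial \bar x_j$ and invoke criticality of $y(x)$ at precisely the right moment so that the implicit derivatives $\partial_x y(x)$ drop out. The rest---the bijectivity by dimension and injectivity, the closed form for $\Phi$, and reading off positivity from the explicit Levi matrix---is essentially routine.
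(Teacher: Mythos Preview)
Your argument for the mapping statement is essentially the paper's: both hinge on recognizing that for $(y,\eta)\in\real^{2n}$ with $\pi_x\kappa_\varphi(y,\eta)=x$, reality of $\eta=-\varphi'_y(x,y)$ forces $y=y(x)$, and then the criticality of $y(x)$ makes the implicit terms in $\partial_x\Phi$ cancel, yielding $(2/i)\partial_x\Phi(x)=\varphi'_x(x,y(x))$. Your bijectivity argument (injectivity plus equality of real dimensions) is a minor variant of the paper's, which instead observes directly that $x\mapsto(y(x),\eta(x))$ is the inverse.

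Where you genuinely diverge is in the proof of strict pluri-subharmonicity. The paper does not compute $\Phi$ at all: it argues that since $\kappa_\varphi$ is canonical and $\sigma|_{\real^{2n}}$ is non-degenerate, so is $\sigma|_{\Lambda_\Phi}$; by the formula $\sigma|_{\Lambda_\Phi}=(2/i)\sum \Phi''_{\bar x_j x_k}\,d\bar x_j\wedge dx_k$ this is equivalent to non-degeneracy of the Levi matrix, and combined with the pluri-subharmonicity already noted (as a supremum of pluri-harmonic functions) one gets strict pluri-subharmonicity. Your route---solving explicitly for $y(x)$ and reading off the Levi matrix of $\frac{1}{2}\Im(Bx)\cdot(\Im C)^{-1}\Im(Bx)$---is correct and has the side benefit of producing the closed formula for $\Phi$ that the paper anyway needs later (cf.\ (\ref{eq2.14.01})). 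The symplectic argument, on the other hand, is coordinate-free, requires no computation, and generalizes immediately beyond the quadratic setting.
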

\begin{proof}
We claim that for any $x\in \comp^n$ there is a unique $(y(x),\eta(x))\in \real^{2n}$ such that $\pi_x \circ \kappa_\varphi(y(x),\eta(x)) = x$. Indeed, if
$y\in\real^n$, then $\varphi'_y(x,y)$ is real if and only if $\nabla_y(-{\rm Im}\,\varphi(x,y))= 0$, in other words, if and only if $y = y(x)$, the
critical point in (\ref{eq2.4}). The claim follows with $\eta(x) = -\varphi'_y(x,y(x))$. We let next $\xi(x)\in \comp^n$ be such that
$\kappa_\varphi(y(x),\eta(x)) = (x,\xi(x))$, i.e. $\xi(x) = \varphi'_x(x,y(x))$. Writing
$$
\Phi(x) = -{\rm Im}\, \varphi(x,y(x)) = \frac{i}{2}\left(\varphi(x,y(x)) - \overline{\varphi(x,y(x))}\right),
$$
we check, using the fact that $\varphi'_y(x,y(x))$ and $y(x)$ are real that
\begeq
\xi(x) = \frac{2}{i} \frac{\partial \Phi}{\partial x}(x).
\endeq
It follows that $\kappa_{\varphi}(\real^{2n}) = \Lambda_{\Phi}$, and since $\sigma|_{\mathbf{R}^{2n}}$ is non-degenerate, we obtain that $\sigma|_{\Lambda_\Phi}$
is non-degenerate, or equivalently, the Levi form $\partial_{\bar x}\partial_x\Phi $ is non-degenerate. Since we already know that $\Phi$ is pluri-subharmonic,
we conclude that $\Phi$ is strictly pluri-subharmonic.
\end{proof}

\bigskip
\noindent
We shall now establish the following basic result, concerning the mapping properties of the FBI transform on $L^2(\real^n)$.

\begin{theo}
If $C > 0$ is suitably chosen in {\rm (\ref{eq2.3})}, then $T$ is unitary,
$$
T: L^2(\mathbf{R}^n) \rightarrow H_\Phi(\mathbf{C}^n):= L^2(\mathbf{C}^n,e^{-2\Phi/h}\, L(dx))\cap \operatorname{Hol}(\mathbf{C}^n).
$$
Here $L(dx)$ is the Lebesgue measure on $\comp^n$.
\end{theo}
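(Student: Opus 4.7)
The plan is to prove unitarity in two steps: first, that $T$ is an isometry from $L^{2}(\real^{n})$ into $H_{\Phi}(\comp^{n})$ for a suitably chosen $C>0$, and second, that the range of $T$ is all of $H_{\Phi}$. Since every isometry has closed range, surjectivity reduces to density of the range, and I would settle the latter by identifying $TT^{*}$ with the Bergman projection on $H_{\Phi}$ (which is the identity on $H_{\Phi}$).

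For the isometry I would compute the Schwartz kernel of $T^{*}T$. For $u\in\mathcal{S}(\real^{n})$, expanding the norm gives
\[
\|Tu\|_{H_{\Phi}}^{2}=|C|^{2}h^{-3n/2}\iint u(y)\overline{u(y')}\,K_{0}(y,y')\,dy\,dy',
\]
where
\[
K_{0}(y,y')=\int_{\comp^{n}} e^{(i\varphi(x,y)-i\overline{\varphi(x,y')}-2\Phi(x))/h}\,L(dx),
\]
and the task is to show that $K_{0}(y,y')$ is a distributional multiple of $\delta(y-y')$. From the definition \eqref{eq2.4} of $\Phi$ and Taylor expansion about the critical point $y(x)$, one has, for $y\in\real^{n}$,
\[
-\Im\varphi(x,y)=\Phi(x)-\tfrac{1}{2}\Im\varphi''_{yy}(y-y(x))\cdot(y-y(x)),
\]
so the real part of the exponent in $K_{0}$ reduces to
\[
-\tfrac{1}{2h}\Im\varphi''_{yy}\bigl[(y-y(x))\cdot(y-y(x))+(y'-y(x))\cdot(y'-y(x))\bigr]\le 0,
\]
a negative semidefinite quadratic form in $x$ whose zero set (when $y=y'$) is a real $n$-dimensional affine subspace of $\comp^{n}$ determined by $y(x)=y$. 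The remaining imaginary part of the exponent is linear in the $n$ transverse real parameters, and supplies the oscillation needed to produce a delta function.

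To carry this out, I would change variables to split $x\in\comp^{n}\simeq\real^{2n}$ into a real $n$-dimensional ``Gaussian'' parameter $s$ along which the real part of the exponent is negative definite, and an $n$-dimensional transverse ``oscillatory'' parameter $t$ on which the integrand is purely oscillatory with phase linear in $y-y'$. A natural splitting is provided by the real-linear identification of $\comp^{n}$ with $\real^{2n}$ coming from Proposition~1.3.2 (via $\kappa_{\varphi}^{-1}:\Lambda_{\Phi}\to\real^{2n}$). The $s$-integral yields a convergent Gaussian proportional to $h^{n/2}$, while the $t$-integral is a Fourier transform producing $(2\pi h)^{n}\delta(y-y')$. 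Multiplying, $K_{0}(y,y')=c\,h^{3n/2}\delta(y-y')$ for an explicit positive constant $c$ depending on $\varphi''_{yy}$ and $\varphi''_{xy}$; choosing $|C|^{2}c=1$ then gives $\|Tu\|=\|u\|$ on the dense subspace $\mathcal{S}\subset L^{2}(\real^{n})$, extending to all of $L^{2}$ by density.

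The main obstacle is that $K_{0}$ is genuinely oscillatory, not absolutely convergent, so Fubini cannot be applied naively. I would circumvent this by interpreting the integrals distributionally: insert a regularizer $e^{-\epsilon|x|^{2}}$, perform all manipulations rigorously with the regularizer in place, and pass to $\epsilon\to 0^{+}$ at the end, exploiting the Gaussian structure in $s$ and dominated convergence for the $t$-Fourier integral. Once $T$ is known to be an isometry, surjectivity is established by repeating the analogous Gaussian computation for the kernel of $TT^{*}$ acting on $L^{2}(\comp^{n},e^{-2\Phi/h}L(dx))$ and checking that it coincides with the reproducing (Bergman) kernel of $H_{\Phi}$ derived earlier in the section; hence $TT^{*}=P_{\Phi}$ acts as the identity on $H_{\Phi}$, completing the unitarity statement.
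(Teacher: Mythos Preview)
Your overall architecture---isometry first, then $TT^{*}=\Pi$---matches the paper exactly, and your treatment of the surjectivity step (compute the kernel of $TT^{*}$, integrate out $y$ by exact stationary phase, recognize the result as $e^{2\psi(x,\bar w)/h}$ and invoke the Bergman projection formula) is precisely what the paper does.

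The isometry step is where you diverge. You propose to compute $T^{*}T$ directly by analyzing the oscillatory kernel $K_{0}(y,y')$ over $\comp^{n}$, splitting $x$ into a Gaussian direction (parametrized by $y(x)$) and an oscillatory direction along the fibers of $y(x)$, then extracting a $\delta(y-y')$ from the latter. This works: on the fiber $y(x)=\mathrm{const}$ the map $x\mapsto \eta(x)=-\Re(\varphi''_{yx}x)+\mathrm{const}$ is a bijection onto $\real^{n}$ (this is exactly the content of $\kappa_{\varphi}^{-1}:\Lambda_{\Phi}\to\real^{2n}$ being a bijection), so the phase $\Re(\varphi''_{xy}x)\cdot(y-y')$ is nondegenerate there and the regularized $t$-integral does produce the delta. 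The paper instead makes the preliminary reduction $\varphi''_{xx}=0$, $\Re\varphi''_{yy}=0$ (harmless: adding a holomorphic quadratic in $x$ to $\varphi$ multiplies $Tu$ by a unimodular weight and shifts $\Phi$ accordingly, while adding a real quadratic in $y$ is absorbed by a unitary multiplication on $u$), after which $Tu(A^{-1}x)$ becomes a Fourier--Laplace transform of a Gaussian times $u$, and a single application of Parseval in $\Re x$ followed by a Gaussian integral in $\Im x$ gives $\|Tu\|_{H_{\Phi}}=\|u\|_{L^{2}}$ with the explicit constant. The paper's route avoids the distributional/regularization issue altogether and yields the normalizing constant $C$ with no effort; your route is more hands-on but has the minor virtue of not needing the normal-form reduction. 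Both are correct.
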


\medskip
\noindent
As a preparation for the proof, let us first derive an expression for the orthogonal (Bergman) projection:
$$
\Pi: L^2_\Phi(\comp^n) \to H_\Phi(\comp^n),
$$
where $L^2_\Phi(\comp^n) = L^2(\mathbf{C}^n,e^{-2\Phi/h}\, L(dx))$ and $H_{\Phi}(\comp^n) \subseteq L^2_{\Phi}(\comp^n)$ is the closed subspace of holomorphic
functions. Let $\psi(x,y)$ be the unique holomorphic quadratic form on $\mathbf{C}^n_x\times \mathbf{C}^n_y$ such that $\psi(x,\bar x) = \Phi(x)$. Here we may notice
that the anti-diagonal $\{(x,\bar x); x\in \comp^n\}$ is maximally totally real $\subseteq \comp^n_x \times \comp^n_y$. Explicitly, we have
$$
\psi(x,y) = \frac{1}{2}\Phi''_{xx}x\cdot x + \Phi_{\bar x x}x\cdot y + \frac{1}{2}\Phi''_{\bar x \bar x}y\cdot y,
$$
so that in particular, $\psi''_{xy} = \Phi''_{x\bar x}$ is non-degenerate. It also follows that when $y = \bar x$, we have
\begeq
\label{eq2.51}
\partial_y \psi = \partial_{\bar x} \Phi, \quad \partial_x \psi = \partial_x \Phi.
\endeq
These observations have the following useful consequence:
\begeq
\label{eq2.6}
2{\rm Re}\, \psi(x,\overline{y})-\Phi(x) - \Phi(y) = -\Phi''_{\bar x x}(y-x)\cdot(\overline{y-x}) \sim - \abs{y-x}^2,
\endeq
on $\comp^n_x \times \comp^n_y$. Here the last conclusion follows since $\Phi$ is strictly pluri-subharmonic, and to verify the first equality in (\ref{eq2.6})
it suffices to Taylor expand the quadratic functions $y\mapsto \Phi(y)$ and $y\mapsto \psi(x,\bar y)$ at the point $y=x$, and exploit (\ref{eq2.51}) to obtain some
cancelations.

\begin{prop}
The orthogonal projection $\Pi: L^2_\Phi(\comp^n)  \to H_\Phi(\comp^n)$ is given by
\begeq
\label{eq2.61}
\Pi u(x) = \frac{2^n \det\psi''_{xy}}{(\pi h)^n} \int_{\mathbf{C}^n} e^{2\psi(x,\bar y)/h}u(y) e^{-2\Phi(y)/h}\, L(dy).
\endeq
\end{prop}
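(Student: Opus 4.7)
The plan is to verify that the operator $\widetilde{\Pi}$ defined by the right-hand side of (\ref{eq2.61}) is the orthogonal projection of $L^2_\Phi(\comp^n)$ onto $H_\Phi(\comp^n)$. It suffices to establish three properties: $\widetilde{\Pi}$ maps $L^2_\Phi$ boundedly into $H_\Phi$, $\widetilde{\Pi}$ is self-adjoint in $L^2_\Phi$, and $\widetilde{\Pi} u = u$ for every $u \in H_\Phi$. These three properties together characterize the orthogonal projection onto $H_\Phi$.

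The mapping properties are the easiest part. Holomorphy of $\widetilde{\Pi} u$ in $x$ is immediate from that of the kernel $K(x,y) = \frac{2^n \det \psi''_{xy}}{(\pi h)^n} e^{2\psi(x,\bar y)/h}$ in $x$. Boundedness on $L^2_\Phi$ follows from Schur's test applied to the weighted kernel $|K(x,y)| e^{-\Phi(x)/h - \Phi(y)/h}$: by (\ref{eq2.6}), this quantity is dominated by $C h^{-n} e^{-|y-x|^2/(Ch)}$, a Gaussian whose $L^1$-norm in either variable is of order one uniformly in $h$. Self-adjointness reduces to the pointwise identity $K(x,y) = \overline{K(y,x)}$, which follows from two observations: first, $\det \psi''_{xy} = \det \Phi''_{x\bar x}$ is real, since $\Phi''_{x\bar x}$ is Hermitian; second, $\overline{\psi(y,\bar x)} = \psi(x,\bar y)$, which can be seen by noting that both sides are holomorphic in $(x, \bar y)$ (treated as independent complex variables) and coincide with the real quantity $\Phi(x)$ on the totally real anti-diagonal $y = \bar x$.

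The reproducing identity $\widetilde{\Pi} u = u$ for $u \in H_\Phi$ is the heart of the proof. Substituting $y = x + z$ and using the explicit form of $\psi$ together with (\ref{eq2.51}), one obtains an algebraic decomposition
\[
2\psi(x,\bar y) - 2\Phi(y) = -2 \Phi''_{x\bar x} z \cdot \bar z + f(x,z),
\]
where $f(x,z)$ is holomorphic and quadratic in $z$ with $f(x,0) = 0$. The integrand then becomes the Gaussian $e^{-2\Phi''_{x\bar x} z \cdot \bar z/h}$ multiplied by the holomorphic-in-$z$ function $e^{f(x,z)/h} u(x+z)$. By rotational invariance of the Gaussian measure, the moments $\int e^{-2\Phi''_{x\bar x} z \cdot \bar z/h} z^\alpha\, L(dz)$ vanish for every multi-index $|\alpha| > 0$, so only the value at $z = 0$ of the holomorphic factor survives, producing $u(x)$. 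The prefactor $2^n \det \psi''_{xy}/(\pi h)^n$ is designed to cancel the Gaussian volume $(\pi h)^n/(2^n \det \Phi''_{x\bar x})$ exactly, using the identification $\psi''_{xy} = \Phi''_{x\bar x}$ recorded just above (\ref{eq2.51}).

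The main obstacle is the algebraic decomposition of the phase. It requires careful Taylor expansion of both $\psi(x, \bar y)$ and $\Phi(y)$ around $y = x$, together with the tangential identities (\ref{eq2.51}) tying $\psi$ to $\Phi$ on the anti-diagonal. The identities (\ref{eq2.51}) are precisely what is needed to eliminate the cross terms linear in $\bar z$ with nontrivial $x$-dependent coefficients; their cancellation is exactly what guarantees that the remaining perturbation $f(x,z)$ is holomorphic in $z$, which in turn is what makes the vanishing-moments argument work.
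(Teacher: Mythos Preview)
Your argument is correct; the boundedness and self-adjointness parts match the paper exactly, and your reproducing-property argument is a legitimate shortcut. The phase identity you state is right: writing $y=x+z$ and using the explicit formula for $\psi$, one has
\[
2\psi(x,\bar y)-2\Phi(y)=-2\Phi''_{\bar x x}z\cdot\bar z-\Phi''_{xx}(2x\cdot z+z\cdot z)-2\Phi''_{\bar x x}z\cdot\bar x,
\]
so the remainder $f(x,z)$ is indeed entire in $z$ with $f(x,0)=0$, and the mean-value property of holomorphic functions against the Hermitian Gaussian (applied after a linear change of variables diagonalising $\Phi''_{\bar x x}$, then Fubini in polar coordinates) yields $\widetilde{\Pi}u(x)=u(x)$. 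One small caveat: your phrasing ``only the value at $z=0$ survives'' via vanishing moments suggests term-by-term integration of a Taylor series, which would require justification; it is cleaner to invoke the spherical mean-value property directly, exactly as the paper does (citing H\"ormander's Lemma 7.3.11), since absolute integrability of the full integrand is already established by (\ref{eq2.6}).

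The paper takes a longer route: it first proves a complex Fourier inversion formula (\ref{eq2.9}) along the contour $\Gamma(x)$ in (\ref{eq2.10}), using the same mean-value argument you use, and then recovers (\ref{eq2.61}) by a change of integration variable $\theta\mapsto w$ followed by a Stokes deformation of the resulting good contour to $w=\bar y$. Your approach bypasses the Fourier inversion entirely and goes straight to the Bergman kernel. What you gain is brevity; what the paper gains is that (\ref{eq2.9}) is itself the prototype for the contour-integral definition of pseudodifferential operators in Section~1.4 (compare (\ref{eq3.2})), so its derivation here does double duty.
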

\begin{proof}
Let $\Pi$ be the operator given in (\ref{eq2.61}). To see that
\begeq
\label{eq2.7}
\Pi = {\cal O}(1): L^2_{\Phi}(\comp^n) \rightarrow H_{\Phi}(\comp^n),
\endeq
we consider the reduced kernel
\begeq
\widetilde \Pi (x,y) = e^{-\Phi(x)/h} \Pi(x,y)e^{\Phi(y)/h},
\endeq
and observe that thanks to (\ref{eq2.6}), we have
$$
\abs{\widetilde \Pi(x,y)} \leq \frac{C}{h^n}e^{-|x-y|^2/Ch}.
$$
The uniform boundedness of $\Pi$ on $L^2_{\Phi}$ is therefore a consequence of Schur's lemma, and since the range of $\Pi$ consists of holomorphic functions, the
property (\ref{eq2.7}) follows. The selfadjointness of $\Pi$ on $L^2_{\Phi}$ follows since $\overline{\psi(x, \bar y)} = \psi(y,\bar x)$. We finally need to
show the reproducing property of $\Pi$,
\begeq
\label{eq2.8}
\Pi u = u, \quad u \in H_\Phi(\comp^n).
\endeq
To see (\ref{eq2.8}), we start by establishing the Fourier inversion formula in the complex domain,
\begeq
\label{eq2.9}
u(x) = \frac{1}{(2\pi h)^n} \iint_{\Gamma(x)} e^{\frac{i}{h}(x-y)\cdot \theta}u(y)\, dy\wedge d\theta,\quad u\in {H}_\Phi(\comp^n).
\endeq
Here $dy\wedge d\theta$ is a $(2n,0)$--form in $\comp^n_y \times \comp^n_{\theta}$, and the integration in (\ref{eq2.9}) is carried out over the $2n$-dimensional
contour (chain) $\Gamma(x)$, parametrized by $y\in \comp^n$ and given by
\begeq
\label{eq2.10}
\Gamma(x): \comp^n \ni y \mapsto \left(y,\theta\right)\in \comp^n \times \comp^n, \quad
\theta= \frac{2}{i} \frac{\partial \Phi}{\partial x}(x) + iC\overline{(x-y)}.
\endeq
Here $C\gg 1$ is large enough. We have
\begeq
\label{eq2.10.1}
dy\wedge d\theta|_{\Gamma(x)} = \left(\frac{C}{i}\right)^n dy \wedge d\bar y
\endeq
is real and non-vanishing, and it what follows we shall tacitly assume that the orientation on $\Gamma(x)$ has been chosen so that the form in
(\ref{eq2.10.1}) is a positive multiple of the Lebesgue measure on $\comp^n_y$. Let us also notice that the unique critical point of the function
$\comp^n \times \comp^n \ni (y,\theta) \mapsto -{\rm Im}\, (x-y)\cdot \theta + \Phi(y)$ is given by $y=x$,
$\theta = \frac{2}{i}\frac{\partial \Phi}{\partial x}(x)$, and the contour $\Gamma(x)$ passes through the critical point for all $C$. To see (\ref{eq2.9}), we
first observe that the contour $\Gamma(x)$ is good~\cite{Sj82}, since along $\Gamma(x)$, we have in view of Taylor's formula,
$$
\operatorname{Re\,}(i(x-y)\cdot \theta) + \Phi(y) - \Phi(x) \leq -\abs{x-y}^2,
$$
provided that $C > 1$ is large enough. The integral in (\ref{eq2.9}) therefore converges absolutely for all $u\in {\rm Hol}(\comp^n)$ such that
$\abs{u(x)} \leq {\cal O}_h(1) \langle{x\rangle}^{N_0} e^{\Phi(x)/h}$, for some $N_0 > 0$, and in particular, for all $u\in H_{\Phi}$. We also notice
that it is independent of $C \gg 1$, in view of Stokes' formula. 

\color{black}
\medskip
\noindent
Using (\ref{eq2.10}), we see that the right hand side in (\ref{eq2.9}) is given by
\begeq
\label{eq2.11}
\frac{2^n C^n}{(2\pi h)^n} \int e^{-C\abs{x-y}^2/h} e^{\frac{2}{h}\frac{\partial \Phi}{\partial x}(x)\cdot (x-y)} u(y)\, L(dy).
\endeq
Here the Gaussian
$$
\comp^n \ni y \mapsto \frac{C^n}{(\pi h)^n} e^{-C\abs{y}^2/h}
$$
is spherically symmetric of integral one, and therefore, by the mean value theorem for holomorphic functions, here applied to the function
$$
y \mapsto e^{\frac{2}{h}\frac{\partial \Phi}{\partial x}(x)\cdot (x-y)} u(y),
$$
we conclude that the expression (\ref{eq2.11}) is equal to $u(x)$ --- see also Lemma 7.3.11 in~\cite{HormI}. This establishes the validity of (\ref{eq2.9}), and
we may observe that the argument given above is in some sense simpler than the usual proof of Fourier's inversion formula in the real domain, since all the
integrals involved converge absolutely, thanks to the choice of a family of good contours, such as $\Gamma(x)$ above.

\bigskip
\noindent
We shall now finish the proof of Proposition 1.3.4 by passing from (\ref{eq2.9}) to (\ref{eq2.61}). To this end, we make a linear complex change of variables
$\theta \mapsto w$, given by
$$
\theta =  \frac{2}{i}\frac{\partial \psi}{\partial x}\left(\frac{x+y}{2},w\right) =
\frac{2}{i}\left(\Phi''_{xx}\left(\frac{x+y}{2}\right) + \Phi''_{x\bar x}w \right).
$$
It follows, since $\psi$ is quadratic, that
$$
2\left(\psi(x,w) - \psi(y,w)\right) = i(x-y)\cdot \theta,
$$
and we get therefore from (\ref{eq2.9}),
\begeq
\label{eq2.12}
u(x) = \frac{1}{(2\pi h)^n} \iint_{\widetilde{\Gamma}(x)} e^{\frac{2}{h}(\psi(x,w)-\psi(y,w))} \left(\frac{2}{i}\right)^n
\left({\rm det}\, \Phi_{x\bar x}\right) u(y)\, dy\wedge dw.
\endeq
Here $\widetilde{\Gamma}(x)$ is the natural image of $\Gamma(x)$, so that $(y,w)\in \widetilde{\Gamma}(x)$ precisely when $(y,\theta) \in \Gamma(x)$. The contour
$\widetilde{\Gamma}(x)$ is good in the sense that along $\widetilde{\Gamma}(x)$, we have
$$
2\operatorname{Re\,}(\psi(x,w) - \psi(y,w)) + \Phi(y) - \Phi(x) \leq -\abs{x-y}^2,
$$
and another good contour $\widehat{\Gamma}(x)$ is given by $w = \bar y$. Indeed, we have in view of (\ref{eq2.6}),
$$
2\operatorname{Re\,}(\psi(x,\bar y) - \psi(y,\bar y)) + \Phi(y) - \Phi(x) \leq -\frac{1}{C}\abs{x-y}^2.
$$
The good contour $\widehat{\Gamma}(x)$ is homotopic to $\widetilde{\Gamma}(x)$, with the homotopy being within the set of good contours, and
we conclude, in view of Stokes' formula, that
\begeq
\label{eq2.13}
u(x) = \frac{{\rm det}\, \Phi_{x\bar x}} {i^n(\pi h)^n} \iint_{\widehat{\Gamma}(x)} e^{\frac{2}{h}(\psi(x,w)-\psi(y,w))} u(y)\, dy\wedge dw = \Pi u.
\endeq
This completes the proof of Proposition 1.3.4.
\end{proof}

\bigskip
\noindent
We shall return to the proof of Theorem 1.3.3, where, without loss of generality, we may assume that
$$
\varphi''_{x x} = {\rm Re}\, \varphi''_{y y} = 0,
$$
so that we can write
\begeq
\label{eq2.14}
\varphi(x,y) = Ax \cdot y + \frac{i}{2} By \cdot y, \quad B > 0, \quad {\rm det}\, A \neq 0.
\endeq
We shall first show that $T: L^2(\real^n) \rightarrow H_{\Phi}(\comp^n)$ is an isometry. To this end, we observe that $Tu(A^{-1}x;h)$ is equal to
$Ch^{-3n/4}$ times the semiclassical Fourier-Laplace transform of $u(y)e^{-By\cdot y/2h}$, and therefore, by Parseval's formula,
$$
\int\abs{Tu(A^{-1}x;h)}^2\, d{\rm Re}\, x = (2\pi h)^n C^2 h^{-3n/2} \int e^{-By\cdot y/h} e^{-2{\rm Im}\,x \cdot y/h}\abs{u(y)}^2\,dy.
$$
Next, a computation using (\ref{eq2.14}) shows that
\begeq
\label{eq2.14.01}
\Phi(x) = \frac{1}{2} B^{-1} {\rm Im}\, (Ax)\cdot {\rm Im}\, (Ax),
\endeq
and therefore
\begin{multline*}
\iint \abs{Tu(A^{-1}x;h)}^2 e^{-2\Phi(A^{-1}x)/h}\, L(dx) \\
= (2\pi)^n C^2 h^{-n/2} \iint e^{-(By\cdot y +2\xi \cdot y + B^{-1}\xi \cdot \xi)/h} \abs{u(y)}^2\,dy\, d\xi.
\end{multline*}
We have $By\cdot y +2\xi \cdot y + B^{-1}\xi \cdot \xi = B^{-1}(\xi + By)\cdot (\xi + By)$, and therefore the integral with respect to $\xi$ in the right hand
side is equal to $(\pi h)^{n/2} \left({\rm det}\, B\right)^{1/2}$. On the other hand, the left hand side is given by $\abs{{\rm det}\, A}^2\norm{Tu}_{H_{\Phi}}^2$,
so that we get
$$
\abs{{\rm det}\, A}^2\norm{Tu}_{H_{\Phi}}^2 = 2^n \pi^{3n/2} C^2  \left({\rm det}\, B\right)^{1/2}\norm{u}_{L^2}^2.
$$
Choosing
\begeq
\label{eq2.14.1}
C = 2^{-n/2} \pi^{-3n/4} \left({\rm det}\, B\right)^{-1/4}\abs{{\rm det}\, A} > 0,
\endeq
we conclude that $T: L^2(\real^n) \rightarrow H_{\Phi}(\comp^n)$ is an isometry.

\medskip
\noindent
We shall finally show that $T T^* = 1$ on $H_{\Phi}(\comp^n)$. Here the Hilbert space adjoint $T^*$ of $T: L^2(\real^n) \rightarrow L^2_{\Phi}(\comp^n)$ is given by
\begeq
\label{eq2.15}
T^*v(y) = C h^{-3n/4} \int e^{-i\varphi^*(\bar x, y)/h} v(x) e^{-2\Phi(x)/h}\, L(dx),
\endeq
where $\varphi^*(x,y) = \overline{\varphi(\bar x, \bar y)}$ is the holomorphic extension of $\real^n_x \times \real^n_y \ni (x,y) \mapsto \overline{\varphi(x,y)}$.
We get, for $v\in {\rm Hol}(\comp^n)$, such that $\abs{v(x)} \leq {\cal O}_{N,h}(1) \langle{x\rangle}^{-N} e^{\Phi(x)/h}$, for all $N$,
\begeq
\label{eq2.15.01}
(T T^*v)(x) = C^2 h^{-3n/2} \iint e^{i(\varphi(x,y)-\varphi^*(\bar w, y))/h} v(w) e^{-2\Phi(w)/h}\, L(dw)\, dy.
\endeq
The integral with respect to $y$ can be computed by exact stationary phase and we get, writing $q(x,\bar w, y) = \varphi(x,y) - \varphi^*(\bar w, y)$,
\begeq
\label{eq2.15.1}
\int e^{i q(x,\bar w, y)/h}\, dy = h^{n/2} \left({\rm det}\, \frac{q''_{yy}}{2\pi i}\right)^{-1/2} e^{i{\rm vc}_y q(x,\bar w, y)/h}.
\endeq
Here
\begeq
\label{eq2.15.2}
\frac{i}{2} {\rm vc}_y (q(x,z,y)) = \frac{i}{2}{\rm vc}_y\left(\varphi(x,y) - \varphi^*(z,y)\right)
\endeq
is a holomorphic quadratic form on $\comp^n_x \times \comp^n_z$, and when $z = \bar x$, we see using (\ref{eq2.14}) that the unique critical point $y$ in
(\ref{eq2.15.2}) is real and that (\ref{eq2.15.2}) is equal to $\Phi(x)$. It follows that
$$
\frac{i}{2}{\rm vc}_y\left(\varphi(x,y) - \varphi^*(z,y)\right) = \psi(x,z),
$$
and using also that $q''_{yy} = 2iB$, we obtain from (\ref{eq2.15.1}) that
$$
\int e^{i q(x,\bar w, y)/h}\, dy = h^{n/2} \pi^{n/2} ({\rm det}\, B)^{-1/2} e^{2\psi(x,\bar w)/h}.
$$
Returning to (\ref{eq2.15.01}) and recalling the explicit expression for the constant $C$ in (\ref{eq2.14.1}), we see that
\begin{multline*}
(T T^* v)(x) = C^2 h^{-3n/2} h^{n/2} \pi^{n/2} ({\rm det}\, B)^{-1/2} \int e^{2\psi(x,\bar w)/h} v(w) e^{-2\Phi(w)/h}\, L(dw) \\
= \frac{2^{-n} ({\rm det}\, B)^{-1} \abs{{\rm det}\, A}^2}{(\pi h)^n} \int e^{2\psi(x,\bar w)/h} v(w) e^{-2\Phi(w)/h}\, L(dw) = (\Pi v)(x) = v(x),
\end{multline*}
where the penultimate equality follows from Proposition 1.3.4. Here we have also used that
$$
{\rm det}\, \Phi''_{x \bar x} = 4^{-n} \abs{{\rm det}\, A}^2 ({\rm det}\, B)^{-1},
$$
in view of (\ref{eq2.14.01}). The proof of Theorem 1.3.3 is complete.

\section{Pseudodifferential operators on FBI\\ transform side}
\setcounter{equation}{0}
Let $\Phi$ be a strictly pluri-subharmonic quadratic form on $\comp^n$, and let us recall the linear IR-manifold $\Lambda_{\Phi}\subset \comp^n_x \times \comp^n_{\xi}$,
defined in (\ref{eq2.5}). Introduce
\begeq
\label{eq3.1}
S(\Lambda_{\Phi}) = \{a\in C^{\infty}(\Lambda_{\Phi});\,\partial^{\alpha} a = {\cal O}_{\alpha}(1),\,\,\forall \alpha\}
\endeq
Here we identify $\Lambda_{\Phi}$ linearly with $\comp^n$ via the projection map $\Lambda_{\Phi}\ni (x,\xi) \mapsto x\in \comp^n$. If $a\in S(\Lambda_{\Phi})$ and
$u\in {\rm Hol}(\comp^n)$ is such that $u = {\cal O}_{h,N}(1) \langle{x\rangle}^{-N}e^{\Phi(x)/h}$, for all $N\geq 0$, we put
\begeq
\label{eq3.2}
{\rm Op}_{h}^w(a)u(x) = \frac{1}{(2\pi h)^n}\int\!\!\!\!\int_{\Gamma(x)} e^{\frac{i}{h}(x-y)\cdot \theta} a\left(\frac{x+y}{2},\theta\right)u(y)\, dy\wedge d\theta.
\endeq
Here $\Gamma(x)$ is the only possible integration contour given by
$$
\theta = \frac{2}{i} \frac{\partial \Phi}{\partial x}\left(\frac{x+y}{2}\right).
$$
Along $\Gamma(x)$, we get, by Taylor's formula,
$$
{\rm Re}\left(i(x-y)\cdot \theta\right) - \Phi(x) + \Phi(y) = \biggl\langle{x-y,\nabla \Phi \left(\frac{x+y}{2}\right)\biggr\rangle}_{{\bf R}^{2n}} - \Phi(x) + \Phi(y) = 0,
$$
and let us notice also that
$$
dy\wedge d\theta|_{\Gamma(x)} = \frac{1}{i^n} {\rm det}\,(\Phi''_{x\bar{x}})dy\wedge d\bar{y}.
$$
It follows that the integral in (\ref{eq3.2}) converges absolutely, and for a suitable constant $C\neq 0$, we may write,
\begeq
\label{eq3.3}
{\rm Op}_{h}^w(a)u(x) = \frac{C}{h^n}\int K(x,y)u(y)\, L(dy),
\endeq
where
$$
K(x,y) = e^{\frac{2}{h}(x-y)\cdot \frac{\partial \Phi}{\partial x}\left(\frac{x+y}{2}\right)}
a\left(\frac{x+y}{2}, \frac{2}{i}\frac{\partial \Phi}{\partial x}\left(\frac{x+y}{2}\right)\right).
$$
It follows that $\partial_{\bar x} K(x,y) = \partial_{\bar y}K(x,y)$, and using an integration by parts we conclude that the function
${\rm Op}_h^w(a)u(x)$ is holomorphic, since $u$ is.

\medskip
\noindent
\begin{theo} Let $a\in S(\Lambda_{\Phi})$. The operator ${\rm Op}_h^w(a)$ extends to a bounded operator: $H_{\Phi}(\comp^n) \rightarrow H_{\Phi}(\comp^n)$, whose
norm is ${\cal O}(1)$, as $h\rightarrow 0^+$.
\end{theo}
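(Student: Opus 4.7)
The plan is to bound $\mathrm{Op}_h^w(a):H_\Phi(\comp^n)\to H_\Phi(\comp^n)$ by applying Schur's lemma to the reduced kernel
$$
\widetilde K(x,y) = e^{-\Phi(x)/h}K(x,y)e^{\Phi(y)/h},
$$
in the spirit of the proof of Proposition 1.3.4, after first deforming the contour $\Gamma(x)$ in (\ref{eq3.2}) so as to produce Gaussian decay in $|x-y|$. The discussion preceding (\ref{eq3.3}) shows that along the original $\Gamma(x)$ the exponent in $\widetilde K$ vanishes identically, so that $|\widetilde K(x,y)|\sim |a|$ has no off-diagonal decay and Schur's test cannot be applied directly.

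Since $\Lambda_\Phi$ is a maximally totally real submanifold of $\comp^{2n}$, I would first extend $a$ to an almost-analytic function $\widetilde a$ in a tubular neighborhood of $\Lambda_\Phi$, with the same symbol bounds as in (\ref{eq3.1}) and
$$
\bar\partial\widetilde a(z,\theta)=\mathcal{O}_N(\dist((z,\theta),\Lambda_\Phi)^N),\qquad \forall N\geq 0.
$$
I would then deform $\Gamma(x)$ to
$$
\Gamma_C(x):\quad \theta=\frac{2}{i}\,\partial_x\Phi\!\left(\frac{x+y}{2}\right)+iC\,\overline{(x-y)},\qquad C\gg 1,
$$
and observe, by combining the Taylor identity already used in the excerpt with the additional contribution $i(x-y)\cdot iC\,\overline{(x-y)}=-C|x-y|^2$, that
$$
\operatorname{Re\,}\!\left(\frac{i}{h}(x-y)\cdot\theta\right)-\frac{\Phi(x)}{h}+\frac{\Phi(y)}{h}=-\frac{C}{h}|x-y|^2
$$
along $\Gamma_C(x)$. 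The linear homotopy $\Gamma_t(x)$, $t\in[0,C]$, stays in the tubular neighborhood of $\Lambda_\Phi$, at distance $\mathcal{O}(t|x-y|)$ from it, and the same computation gives an exponent bounded by $-ct|x-y|^2/h$.

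By Stokes' theorem (applied to the $2n$-chain swept out by the homotopy), the original integral equals the analogous integral along $\Gamma_C(x)$, plus a remainder involving $\bar\partial\widetilde a$ integrated over the homotopy. The main term has reduced kernel bounded by $C'h^{-n}e^{-|x-y|^2/(C''h)}$, which is a Schur kernel uniformly in $h$. The remainder's reduced kernel is $\mathcal{O}(h^N)$ for every $N$, thanks to the decay of $\bar\partial\widetilde a$ balanced against the Gaussian weight $e^{-ct|x-y|^2/h}$ along $\Gamma_t(x)$. Schur's lemma then yields $\|{\rm Op}_h^w(a)\|_{H_\Phi\to H_\Phi}=\mathcal{O}(1)$; the holomorphy of the image is preserved because the relation $\partial_{\bar x}K=\partial_{\bar y}K$ noted in the excerpt extends to the deformed kernels, allowing an integration by parts in $\bar y$ to show that $\partial_{\bar x}({\rm Op}_h^w(a)u)=0$.

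The main technical obstacle is precisely the verification that the homotopy remainder is genuinely $\mathcal{O}(h^\infty)$ uniformly in $(x,y)$. This amounts to the standard trade-off: $\bar\partial\widetilde a$ vanishes like $\dist(\cdot,\Lambda_\Phi)^N\lesssim (t|x-y|)^N$ on $\Gamma_t(x)$, while the remaining factors contribute $t|x-y|^2/h$ in the exponent; integrating in $t$ and optimizing $N$ against $h$ produces the required decay. Given that, everything else is a direct application of Schur's lemma as in Proposition 1.3.4.
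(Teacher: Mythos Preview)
Your overall strategy---almost-analytic extension of $a$ off $\Lambda_\Phi$, contour deformation, Stokes, then Schur---is exactly the paper's. The gap is in the specific deformation you choose.

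You deform to $\Gamma_C(x):\theta=\frac{2}{i}\partial_x\Phi\bigl(\frac{x+y}{2}\bigr)+iC\,\overline{(x-y)}$ and then assert that the homotopy ``stays in the tubular neighborhood of $\Lambda_\Phi$, at distance $\mathcal O(t|x-y|)$.'' But $t|x-y|$ is unbounded in $y$, so for any fixed tube width and any $t>0$ the contour leaves the tube once $|x-y|$ is large. Outside the tube the almost-analytic extension $\widetilde a$ is identically zero (that is how it is built), so your ``main term'' integrand over $\Gamma_C$ vanishes for $|x-y|\gtrsim 1/C$ and the advertised global Gaussian bound $C'h^{-n}e^{-|x-y|^2/(C''h)}$ is never actually realized; likewise the remainder estimate $\bar\partial\widetilde a=\mathcal O_N\bigl((t|x-y|)^N\bigr)$ is vacuous when $t|x-y|\ge 1$. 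One can in principle salvage the argument by exploiting the support of $\widetilde a$ explicitly, but then the Jacobian $dy\wedge d\theta|_{\Gamma_t}$ (which now grows polynomially in $t$ and $|x-y|$) has to be tracked, and your sketch does not do this.

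The paper avoids all of this by using the \emph{bounded} deformation
\[
\Gamma_t(x):\quad \theta=\frac{2}{i}\,\partial_x\Phi\Bigl(\frac{x+y}{2}\Bigr)+it\,\frac{\overline{x-y}}{\langle x-y\rangle},\qquad 0\le t\le 1,
\]
which stays in a fixed tube around $\Lambda_\Phi$ for all $y$. Along $\Gamma_1(x)$ the reduced kernel is $\mathcal O(h^{-n})\exp\bigl(-\frac{|x-y|^2}{h\langle x-y\rangle}\bigr)$, which is not Gaussian but is still an $\mathcal O(1)$ Schur kernel (split $|x-y|\le 1$ and $|x-y|\ge 1$). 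On the swept-out chain, $\bar\partial\widetilde a=\mathcal O_N\bigl(t^N|x-y|^N/\langle x-y\rangle^N\bigr)$ is now a genuinely small factor, and the paper checks by an explicit $dt$-integration (again splitting $|x-y|\lessgtr 1$ and $t\lessgtr h^{1/2}$) that the remainder is $\mathcal O(h^\infty)$ in $L^2_\Phi$-norm. With this modification your outline is correct.

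One minor point: the relation $\partial_{\bar x}K=\partial_{\bar y}K$ does \emph{not} persist for the deformed kernel, since $\widetilde a$ is not holomorphic. Holomorphy of $\mathrm{Op}_h^w(a)u$ is established once and for all on the original contour $\Gamma(x)$ for $u$ in the dense subspace of rapidly decaying elements; after boundedness is proved, the extension to $H_\Phi$ lands in $H_\Phi$ because $H_\Phi$ is closed in $L^2_\Phi$.
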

\begin{proof}
Following~\cite{Sj02}, we shall prove this result by means of a contour deformation argument. When $0\leq t \leq 1$, let $\Gamma_t(x)$ be the $2n$-dimensional
contour, given by
\begeq
\label{eq3.4}
\theta = \frac{2}{i} \frac{\partial \Phi}{\partial x}\left(\frac{x+y}{2}\right) + it \frac{\overline{x-y}}{\langle{x-y\rangle}}.
\endeq
We also introduce the $(2n+1)$-dimensional contour $G(x) \subset \comp^n_y \times \comp^n_{\theta}$, given by
$$
G(x) = \bigcup_{0 \leq t\leq 1} \Gamma_t(x).
$$
We would like to replace the contour $\Gamma(x)=\Gamma_0(x)$ by $\Gamma_1(x)$ in (\ref{eq3.2}), and to that end, we let $\widetilde{a}\in C^{\infty}(\comp^{2n}_{x,\xi})$ be an almost
holomorphic extension of $a\in S(\Lambda_{\Phi})$, so that ${\rm supp}\,(\widetilde{a}) \subseteq \Lambda_{\Phi} + {\rm neigh}(0,\comp^{2n})$, all derivatives
of $\widetilde{a}$ are bounded, $\widetilde{a}|_{\Lambda_{\Phi}} = a$, and
\begeq
\label{eq3.41}
\abs{\partial_{\bar x,\bar \xi} \widetilde{a}(x,\xi)} \leq {\cal O}_N(1) \abs{\xi - \frac{2}{i}\frac{\partial \Phi}{\partial x}(x)}^N,
\endeq
for all $N \geq 0$. Let us recall that to construct $\widetilde{a}$, we may first make a complex linear change of coordinates to replace $\Lambda_{\Phi}$ by
$\real^{2n}$ and consider the problem of constructing an almost holomorphic extension of $a\in C^{\infty}(\real^{2n})$, with
$\partial^{\alpha} a \in L^{\infty}(\real^{2n})$ for all $\alpha$. To this end, following the classical construction by H\"ormander, explained in~\cite{DSj},
we set
\begeq
\label{eq3.5}
\widetilde{a}(X + iY) = \sum_{\abs{\alpha}\geq 0} \frac{\partial^{\alpha}a(X)}{\alpha!} (iY)^{\alpha} \chi(t_{\abs{\alpha}}Y),
\endeq
where $\chi\in C^{\infty}_0(\real^{2n})$, $\chi = 1$ near $0$, and $t_j \rightarrow \infty$ sufficiently rapidly. Returning to (\ref{eq3.2}), we get by Stokes'
formula, assuming that $u\in {\rm Hol}(\comp^n)$, with $u(x) = {\cal O}_{h,N}(1) \langle{x\rangle}^{-N}e^{\Phi(x)/h}$, for all $N\geq0$,
\begeq
\label{eq3.6}
{\rm Op}^w_h(a)u = I_1 u + I_2 u,
\endeq
where
\begeq
\label{eq3.7}
I_1 u(x) = \frac{1}{(2\pi h)^n}\int\!\!\!\!\int_{\Gamma_1(x)} e^{\frac{i}{h}(x-y)\cdot \theta} \widetilde{a}\left(\frac{x+y}{2},\theta\right)u(y)\, dy\wedge d\theta,
\endeq
and
\begeq
\label{eq3.8}
I_2 u(x) = \frac{1}{(2\pi h)^n} \int_{G(x)}
d_{y,\theta} \left(e^{\frac{i}{h}(x-y)\cdot\theta} \widetilde{a}\left(\frac{x+y}{2},\theta\right)u(y)\right)\wedge dy\wedge d\theta.
\endeq
We have $dy\wedge d\theta|_{\Gamma_1(x)} = {\cal O}(1) L(dy)$, and it follows from (\ref{eq3.4}) that the reduced kernel of $I_1$ satisfies
$$
\abs{e^{-\Phi(x)/h} I_1(x,y) e^{\Phi(y)/h}} \leq \frac{C}{h^n} e^{-\frac{\abs{x-y}^2}{h\langle{x-y\rangle}}}.
$$
In order to conclude that $I_1 = {\cal O}(1): L^2_{\Phi}(\comp^n) \rightarrow L^2_{\Phi}(\comp^n)$, in view of Schur's lemma, it suffices to check that
$$
\frac{1}{h^n} \int e^{-\frac{\abs{x}^2}{h\langle{x\rangle}}}\, L(dx) = {\cal O}(1),
$$
which is easily seen by considering the integrals over the regions where $\abs{x}\leq 1$ and $\abs{x}\geq 1$. When estimating the contribution of $I_2$, we write
\begin{multline*}
d_{y,\theta} \left(e^{\frac{i}{h}(x-y)\cdot\theta} \widetilde{a}\left(\frac{x+y}{2},\theta\right)u(y)\right)\wedge dy\wedge d\theta \\
= e^{\frac{i}{h}(x-y)\cdot\theta} u(y)\partial_{\bar y,\bar \theta} \left(\widetilde{a}\left(\frac{x+y}{2},\theta\right)\right)\wedge dy\wedge d\theta,
\end{multline*}
and notice that in view of (\ref{eq3.41}), we have along $G(x)$,
$$
\partial_{\bar y,\bar \theta} \left(\widetilde{a}\left(\frac{x+y}{2},\theta\right)\right)\wedge dy\wedge d\theta =
{\cal O}_N(1) t^N \frac{\abs{x-y}^N}{\langle{x-y\rangle}^N}dt\, L(dy),\quad N\geq 0.
$$
It follows that the reduced kernel of $I_2$ satisfies
$$
\abs{e^{-\Phi(x)/h} I_2(t,x,y)e^{\Phi(y)/h}} \leq \frac{C}{h^n} e^{-\frac{t\abs{x-y}^2}{h\langle{x-y\rangle}}}t^N \frac{\abs{x-y}^N}{\langle{x-y\rangle}^N},
$$
and by an application of Schur's lemma, we see that in order to control the norm of the operator
$$
I_2: L^2_{\Phi}(\comp^n) \rightarrow L^2_{\Phi}(\comp^n),
$$
it suffices to estimate
$$
\frac{1}{h^n} \int e^{-\frac{t\abs{x}^2}{h\langle{x\rangle}}} t^N \frac{\abs{x}^N}{\langle{x\rangle}^N}\, L(dx),
$$
uniformly in $t\in [0,1]$. In doing so, we consider first the contribution of the region where $\abs{x}\leq 1$. We get
\begin{multline*}
\frac{1}{h^n} \int_{\abs{x}\leq 1} e^{-\frac{t\abs{x}^2}{h\langle{x\rangle}}} t^N \frac{\abs{x}^N}{\langle{x\rangle}^N}\, L(dx) =
{\cal O}(1) h^{-n} \int_0^1 e^{-\frac{tr^2}{2h}}t^N r^{N+2n-1}dr \\
\leq {\cal O}(1) h^{-n} \int_0^{\infty} e^{-s^2} t^N \left(\frac{2h}{t}\right)^{N/2+n} s^{N+2n-1}\,ds = {\cal O}(1) h^{N/2} t^{N/2-n} = {\cal O}(h^{N/2}),
\end{multline*}
uniformly in $t\in [0,1]$, for $N$ large enough. Next, the contribution of the integral over the region $\abs{x}\geq 1$ does not exceed a constant times
\begin{multline*}
h^{-n} \int_{\abs{x}\geq 1} e^{-\frac{t\abs{x}}{2h}} t^N L(dx) = {\cal O}(1) h^{-n}\int_1^{\infty} e^{-\frac{tr}{2h}} t^N r^{2n-1}\,dr \\
= {\cal O}(1) h^n t^{N-2n} \int_{t/h}^{\infty} e^{-\rho/2} \rho^{2n-1}\, d\rho = {\cal O}(1) h^n t^{N-2n} {\cal O}\left(\left(1+\frac{t}{h}\right)^{-M}\right),
\end{multline*}
for all $M\geq 0$. If $t\leq h^{1/2}$, we use the factor $t^{N-2n}$ to get the bound ${\cal O}(h^{N/2})$, while for $t\geq h^{1/2}$, we use the factor
$$
{\cal O}\left(\left(1+\frac{t}{h}\right)^{-M}\right) = {\cal O}(h^{M/2}),
$$
to get the bound ${\cal O}(h^{n+M/2})$. We conclude, in view of (\ref{eq3.6}) that
\begeq
\label{eq3.9}
{\rm Op}_h^w(a)u(x) =
\frac{1}{(2\pi h)^n}\int\!\!\!\!\int_{\Gamma_1(x)} e^{\frac{i}{h}(x-y)\cdot \theta} \widetilde{a}\left(\frac{x+y}{2},\theta\right)u(y)\, dy\wedge d\theta
+ Ru,
\endeq
where
$$
R = {\cal O}(h^{\infty}): L^2_{\Phi}(\comp^n) \rightarrow L^2_{\Phi}(\comp^n).
$$
This completes the proof.
\end{proof}

\bigskip
\noindent
We shall next discuss the link between the $h$-pseudodifferential operators on the FBI transform side and the semiclassical Weyl quantization on $\real^n$. We have
the following metaplectic Egorov theorem.
\begin{theo}
Let $T: L^2(\real^n) \rightarrow H_{\Phi}(\comp^n)$ be a metaplectic FBI transform with the associated canonical transformation
$$
\kappa_T: \real^{2n} \rightarrow \Lambda_{\Phi}.
$$
If $a\in S(\Lambda_{\Phi})$ then we have
$$
T^* {\rm Op}_h^w(a) T = {\rm Op}_h^w(a\circ\kappa_T).
$$
Here the operator in the right hand side is the $h$-Weyl quantization of the symbol $a\circ \kappa_T \in S(1)$ on $\real^{n}$.
\end{theo}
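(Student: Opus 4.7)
The plan is to establish the intertwining identity
$$
{\rm Op}_h^w(a)\circ T \;=\; T\circ {\rm Op}_h^w(a\circ\kappa_T)
$$
as operators $L^2(\real^n)\to H_\Phi(\comp^n)$, and then compose on the left with $T^*$, invoking the unitarity $T^*T = I$ established in Theorem 1.3.3.

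Writing both compositions as triple oscillatory integrals via the contour representation (\ref{eq3.2}) and the definition (\ref{eq2.3}), the left-hand side has combined phase
$$
\Psi_L(y,\theta;x,z) = (x-y)\cdot\theta + \varphi(y,z),
$$
with $(y,\theta)$ on the contour $\Gamma(x)$, $z\in\real^n$, and amplitude $a((x+y)/2,\theta)$; the right-hand side carries the phase
$$
\Psi_R(y,\eta;x,z) = \varphi(x,y) + (y-z)\cdot\eta,
$$
with $(y,\eta)\in\real^{2n}$, and amplitude $(a\circ\kappa_T)((y+z)/2,\eta)$. Both inner $2n$-dimensional integrations have holomorphic quadratic phases and admit unique critical points in the fiber variables: on the left, $(y,\theta) = (x, \partial_1\varphi(x,z))$; on the right, $(y,\eta) = (z, -\partial_2\varphi(x,z))$. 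In both cases the critical value is $\varphi(x,z)$, and by the very definition $\kappa_\varphi(z,-\partial_2\varphi(x,z)) = (x,\partial_1\varphi(x,z))$ of $\kappa_T$, the amplitudes at the critical points coincide:
$$
a(x,\partial_1\varphi(x,z)) = (a\circ\kappa_T)(z,-\partial_2\varphi(x,z)).
$$
This guarantees that the leading-order terms of stationary phase on the two sides agree.

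The main obstacle is to see that the \emph{full} exact-stationary-phase expansions, and not merely their leading terms, coincide at every order in $h$. Because the phases are holomorphic quadratic forms and $\kappa_T$ is a linear complex canonical transformation, the Egorov identity is expected to hold exactly, with no $O(h^\infty)$ error; this is the metaplectic nature of the statement. Concretely, at fixed $(x,z)$ I would perform a linear holomorphic change of variables in the $2n$-dimensional fiber mapping the $(y,\eta)$-contour of the right-hand side onto the contour $\Gamma(x)$ in $(y,\theta)$-space; the required linear map is dictated by $\kappa_\varphi$ and simultaneously converts $\Psi_R$ into $\Psi_L$ and the symbol $(a\circ\kappa_T)((y+z)/2,\eta)$ into $a((x+y)/2,\theta)$. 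The Jacobian of the transformation is exactly the ratio of the prefactors coming from the two Hessian determinants
$$
H_L = \begin{pmatrix} \varphi''_{yy} & -I \\ -I & 0 \end{pmatrix},\qquad H_R = \begin{pmatrix} \varphi''_{yy} & I \\ I & 0 \end{pmatrix},
$$
so the two oscillatory integrals become identical term by term. The contour deformation between the real fiber and the contour $\Gamma(x)$ is justified by Stokes' theorem, in the same spirit as the good-contour argument used in the proof of Theorem \ref{theo3.1} (eqs. (\ref{eq3.6})--(\ref{eq3.9})).

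Once the intertwining is established, the theorem follows by applying $T^*$ on the left and using $T^*T = I$ from Theorem 1.3.3, obtaining $T^*\,{\rm Op}_h^w(a)\,T = {\rm Op}_h^w(a\circ\kappa_T)$.
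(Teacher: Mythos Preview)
Your approach differs substantially from the paper's. Rather than comparing the triple oscillatory integrals directly, the paper first verifies the intertwining ${\rm Op}_h^w(k)\circ T = T\circ{\rm Op}_h^w(\ell)$ for \emph{linear} symbols $k,\ell$ related by $\ell = k\circ\kappa_T$ (a short computation with first-order differential operators acting under the integral sign), then exponentiates via the identities $e^{i\ell(x,hD)/h} = {\rm Op}_h^w(e^{i\ell/h})$ on $L^2(\real^n)$ and the analogous $e^{ik(x,hD)/h} = {\rm Op}_h^w(e^{ik/h})$ on $H_\Phi$ (the latter proved by a contour deformation and the mean-value theorem for holomorphic functions), and finally writes a general $a\in\mathcal S(\Lambda_\Phi)$ as a Fourier superposition of such exponentials, with a density argument to pass to $S(\Lambda_\Phi)$. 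This route never manipulates the full kernels and makes the exactness transparent, since each building block is an exact identity.

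Your direct route can be made to work, and one pleasant feature you did not make explicit is that the affine change of fibre variables determined by $\bigl((x+y)/2,\theta\bigr)=\kappa_T\bigl((y'+z)/2,\eta\bigr)$ sends $\real^{2n}_{y',\eta}$ \emph{exactly} onto the Weyl contour $\Gamma(x)$ (since $\kappa_T(\real^{2n})=\Lambda_\Phi$ and $(y'+z)/2\in\real^n$), so no almost-holomorphic extension or Stokes argument is needed and the identity is genuinely exact rather than modulo $\mathcal O(h^\infty)$. The gap is the central assertion itself: that this same change of variables carries $\Psi_R$ into $\Psi_L$. You have only checked that critical points and critical values correspond; matching the full quadratic forms is a separate identity, using both the quadraticity of $\varphi$ and the fact that $\kappa_T$ is \emph{generated} by $\varphi$, and you have not carried it out. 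That identity (which is true---write $\varphi(X,Y)=\tfrac12\alpha X\cdot X+\beta X\cdot Y+\tfrac12\gamma Y\cdot Y$ and expand both sides) is precisely where the metaplectic exactness of the Weyl calculus is encoded; asserting it without proof is essentially asserting the theorem. The paper's strategy sidesteps this computation by reducing to linear symbols, where the verification is a one-line differentiation under the integral.
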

\begin{proof}
The starting point is the following fact that can be verified by means of an explicit computation: let $\ell$ be a real linear form on $\real^{2n}$ and let
$k$ be the linear form on $\Lambda_{\Phi}$ such that $k\circ \kappa_T = \ell$. Then we have on ${\cal S}(\real^n)$,
\begeq
\label{eq3.10}
{\rm Op}_h^w(k) \circ T = T \circ {\rm Op}_h^w(l).
\endeq
In the computation, it is convenient to use that if $k(x,\xi) = x^*\cdot x + \xi^* \cdot \xi$, $x,\xi \in \comp^n$, then
$$
{\rm Op}^w_h(k) = k(x,hD_x) = x^*\cdot x + \xi^*\cdot hD_x,
$$
and there is a similar formula for ${\rm Op}^w_h(\ell)$. Now let us recall from~\cite{DSj} that the first order operator $\ell(x,hD_x) = {\rm Op}^w_h(\ell)$ is essentially selfadjoint on
$L^2(\real^n)$ from ${\cal S}(\real^n)$, and
\begeq
\label{eq3.10.1}
e^{i\ell(x,hD_x)/h} = {\rm Op}^w_h\left(e^{i\ell(x,\xi)/h}\right).
\endeq
It follows from (\ref{eq3.10}) and the unitarity of $T$ that $k(x,hD_x)$ is essentially selfadjoint on $H_{\Phi}(\comp^n)$ from $T{\cal S}(\real^n)$, and
therefore, the corresponding unitary groups are intertwined by $T$,
$$
e^{ik(x,hD_x)/h}\circ T = T \circ e^{il(x,hD_x)/h}.
$$
Here we claim that in analogy with (\ref{eq3.10.1}), we have
\begeq
\label{eq3.11}
e^{ik(x,hD_x)/h} = {\rm Op}_h^w(e^{ik(x,\xi)/h}),
\endeq
where the right hand side is still given by the contour integral in (\ref{eq3.2}). Indeed, let us write, for $u\in T{\cal S}(\real^n)$,
\begeq
\label{eq3.12}
{\rm Op}_h^w\left(e^{ik(x,\xi)/h}\right) u(x) = \frac{1}{(2\pi h)^n}\int\!\!\!\int_{\Gamma(x)}
e^{\frac{i}{h}\left((x-y+\xi^*)\cdot \theta + x^*\cdot\left(\frac{x+y}{2}\right)\right)}u(y)\, dy\wedge d\theta.
\endeq
Here by Stokes' theorem, the integration contour can be deformed to the following,
$$
\theta = \frac{2}{i}\frac{\partial \Phi}{\partial x}(x) + iC\overline{(x-y+\xi^*)},
$$
for $C\gg 1$ large enough, and the expression (\ref{eq3.12}) becomes
$$
\frac{2^n C^n}{(2\pi h)^n} \int e^{-C\abs{x-y+\xi^*}^2/h} e^{\frac{2}{h}(x-y+\xi^*)\cdot \frac{\partial \Phi}{\partial x}(x) +
\frac{i}{h}x^*\cdot \left(\frac{x+y}{2}\right)} u(y)\, L(dy),
$$
which, by the mean value theorem for holomorphic functions, is equal to
$$
x\mapsto e^{\frac{i}{h}x^*\cdot x} e^{\frac{i}{2h}x^*\cdot \xi^*} u(x+\xi^*) = e^{ik(x,hD_x)/h}u(x).
$$
This establishes (\ref{eq3.11}) and therefore, we get
\begeq
\label{eq3.13}
{\rm Op}_h^w\left(e^{\frac{i}{h}k(x,\xi)}\right)\circ T = T \circ {\rm Op}_h^w\left(e^{\frac{i}{h}\ell(x,\xi)}\right).
\endeq
If $a\in {\cal S}(\Lambda_{\Phi})$ and $b\in {\cal S}(\real^{2n})$ are related by $b=a\circ \kappa_T$, then by Fourier's inversion formula, we can represent
$a$ and $b$ as superpositions of bounded exponentials of the form $e^{ik(x,\xi)/h}$ and $e^{il(x,\xi)/h}$, respectively. Here the linear forms $k$ and $\ell$ are
related by $\ell = k \circ \kappa_T$, and passing to the $h$--Weyl quantizations, we get, in view of (\ref{eq3.13}),
\begeq
\label{eq3.14}
{\rm Op}^w_h (a) \circ T = T\circ {\rm Op}^w_h(b).
\endeq
A density argument allows us to complete the proof.
\end{proof}

\bigskip
\noindent
We shall finally make some remarks concerning pseudodifferential operators with holomorphic symbols, referring to~\cite{SjAst}, as well as to the second part of this
text, for a much more extensive discussion. Let us assume that $a(x,\xi)$ is a holomorphic bounded function in a region of the form
$\Lambda_{\Phi} + W \subset \comp^n_x \times \comp^n_{\xi}$. Here $W$ is a bounded open neighborhood of $0\in \comp^{2n}$. It follows from the proof of Theorem
1.4.1 that in this case we have, for $u\in H_{\Phi}(\comp^n)$,
\begeq
\label{eq3.15}
{\rm Op}_h^w(a)u(x) = \frac{1}{(2\pi h)^n}\int\!\!\!\int_{\Gamma_C(x)} e^{\frac{i}{h}(x-y)\cdot \theta} a\left(\frac{x+y}{2},\theta\right)u(y)\,dy\wedge d\theta,
\endeq
where the contour $\Gamma_C(x)$ is given by
$$
\theta = \frac{2}{i}\frac{\partial \Phi}{\partial x}\left(\frac{x+y}{2}\right) + \frac{i}{C}\frac{\overline{(x-y)}}{\langle{x-y\rangle}},
$$
and $C>0$ is large enough fixed, so that $\Gamma_C(x) \subset \Lambda_{\Phi} + W$. The holomorphy of the symbol allows us to consider weight functions different
from $\Phi$ as well, and study boundedness properties of ${\rm Op}_h^w(a)$ in the corresponding exponentially weighted spaces.

\medskip
\noindent
Following~\cite{Sj02}, we have the following result.
\begin{theo}
Let $\widetilde \Phi \in C^{1,1}(\comp^n)$ be such that $\widetilde \Phi(x) = \Phi(x) + f(x)$, where $f\in C^{1,1}_0(\comp^n)$ is such that
$\norm{\nabla f}_{L^\infty}$, $\norm{\nabla^2 f}_{L^\infty}$ are sufficiently small. We then have a uniformly bounded operator
\begeq
\label{eq3.16}
{\rm Op}_{h}^w(a) = {\cal O}(1): H_{\widetilde \Phi}(\comp^n) \to H_{\widetilde \Phi}(\comp^n).
\endeq
Here we set $H_{\widetilde \Phi}(\comp^n) = {\rm Hol}(\comp^n) \cap L^2(\comp^n, e^{-2\widetilde{\Phi}/h}\, L(dx))$.
\end{theo}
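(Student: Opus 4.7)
The plan is to recycle the contour-deformation argument from the proof of Theorem~1.4.1, with the weight $\Phi$ replaced throughout by $\widetilde\Phi$, using the holomorphy of $a$ in $\Lambda_\Phi+W$ to freely deform the $\theta$-contour. Since $f\in C^{1,1}_0(\comp^n)$ is bounded, $H_{\widetilde\Phi}(\comp^n)$ and $H_\Phi(\comp^n)$ coincide as sets (with non-uniform-in-$h$ norm equivalence), so the dense subspace of holomorphic $u$ with $|u(y)|\leq \mathcal{O}_{h,N}(1)\langle y\rangle^{-N}e^{\Phi(y)/h}$ for all $N$ is also dense in $H_{\widetilde\Phi}$, and on it the representation (\ref{eq3.15}) is available.

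For such a $u$, I would deform $\Gamma_C(x)$ to the new contour $\widetilde\Gamma_C(x)$ defined by
$$
\theta=\frac{2}{i}\,\frac{\partial\widetilde\Phi}{\partial x}\!\left(\frac{x+y}{2}\right)+\frac{i}{C}\frac{\overline{x-y}}{\langle x-y\rangle},
$$
via the linear homotopy $\theta_s=\frac{2}{i}\partial_x\!\bigl((1-s)\Phi+s\widetilde\Phi\bigr)\!\bigl(\tfrac{x+y}{2}\bigr)+\frac{i}{C}\overline{(x-y)}/\langle x-y\rangle$, $s\in[0,1]$. Since $\partial_x f$ has compact support and $\|\nabla f\|_{L^\infty}$ is small, the homotopy stays inside $\Lambda_\Phi+W$ (adjusting $C$ if necessary) and agrees with $\Gamma_C(x)$ outside a fixed compact $y$-region, so Stokes' theorem, together with the $y\to\infty$ decay of the integrand, justifies the replacement.

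The heart of the matter is then the reduced kernel $\widetilde K(x,y)=e^{-\widetilde\Phi(x)/h}K(x,y)e^{\widetilde\Phi(y)/h}$ obtained from the integral on $\widetilde\Gamma_C$. By the defining relation of that contour,
$$
\Re\bigl(i(x-y)\cdot\theta\bigr)-\widetilde\Phi(x)+\widetilde\Phi(y)=E(x,y)-\frac{1}{C}\frac{|x-y|^2}{\langle x-y\rangle},
$$
where $E(x,y)=\langle\nabla\widetilde\Phi((x+y)/2),x-y\rangle_{\real^{2n}}-\bigl(\widetilde\Phi(x)-\widetilde\Phi(y)\bigr)$ is the midpoint-rule defect. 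This defect vanishes identically on the quadratic part $\Phi$, so $E$ depends only on $f$, and one has the two coexisting bounds
$$
|E(x,y)|\leq C\|\nabla^2 f\|_{L^\infty}|x-y|^2\quad\text{and}\quad|E(x,y)|\leq 2\|\nabla f\|_{L^\infty}|x-y|,
$$
from the $C^{1,1}$ midpoint-rule error and from the Lipschitz fundamental theorem, respectively. Using the first for $|x-y|\leq 1$ and the second for $|x-y|\geq 1$, one splices them into $|E|\leq C\epsilon|x-y|^2/\langle x-y\rangle$ with $\epsilon=\max(\|\nabla f\|_{L^\infty},\|\nabla^2 f\|_{L^\infty})$; the smallness of $\epsilon$ then absorbs $E$ into the contour term, leaving $\Re(i(x-y)\cdot\theta)-\widetilde\Phi(x)+\widetilde\Phi(y)\leq -|x-y|^2/(C'\langle x-y\rangle)$.

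Therefore $|\widetilde K(x,y)|\leq(C/h^n)\exp\bigl(-|x-y|^2/(C'h\langle x-y\rangle)\bigr)$, and Schur's lemma, applied exactly as in Proposition~1.3.4 and Theorem~1.4.1, delivers (\ref{eq3.16}); the density argument transports the bound to all of $H_{\widetilde\Phi}$, and holomorphy of $\mathrm{Op}_h^w(a)u$ is automatic from the identity $\partial_{\bar x}K=\partial_{\bar y}K$ after integration by parts, as at the start of Section~1.4. The principal obstacle I anticipate is the combined Taylor estimate: one must simultaneously exploit the $C^{1,1}$ regularity of $f$ near the diagonal and its Lipschitz control away from it, stitching them into a single $|x-y|^2/\langle x-y\rangle$ decay that matches the Gaussian-in-$\langle\cdot\rangle$ factor produced by the contour perturbation.
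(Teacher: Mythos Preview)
Your proposal is correct and follows essentially the same route as the paper: deform to the contour $\widetilde\Gamma_C(x)$ built from $\partial_x\widetilde\Phi$, observe that the quadratic part $\Phi$ drops out of the midpoint defect so only $f$ contributes, bound this defect by a small multiple of $|x-y|^2/\langle x-y\rangle$, and finish with Schur's lemma. Your splicing of the $\|\nabla^2 f\|_{L^\infty}$ and $\|\nabla f\|_{L^\infty}$ bounds is in fact more explicit than the paper, which compresses that step into a single invocation of ``Taylor's formula'' with constant $\mathcal{O}(1)\|f''\|_{L^\infty}$; your version makes clear why both smallness hypotheses are used.
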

\begin{proof}
We make a deformation to the new contour and set
\begeq
\label{eq3.17}
{\rm Op}_h^w(a)u(x) = \frac{1}{(2\pi h)^n}\int\!\!\!\int_{\widetilde{\Gamma}_C(x)}
e^{\frac{i}{h}(x-y)\cdot \theta} a\left(\frac{x+y}{2},\theta\right)u(y)\,dy\wedge d\theta,
\endeq
where
\begeq
\label{eq3.18}
\widetilde \Gamma_C(x) = \frac{2}{i}\frac{\partial \widetilde \Phi}{\partial x}\left(\frac{x+y}{2} \right) +
\frac{i}{C}\frac{\overline{x-y}}{\left\langle x-y \right\rangle}.
\endeq
Along the contour $\widetilde{\Gamma}_C(x)$, we have
\begin{multline*}
-\widetilde{\Phi}(x) + {\rm Re}\, \left(i(x-y)\cdot \theta\right) + \widetilde{\Phi}(y) \\
= -\widetilde{\Phi}(x) + \biggl\langle{x-y,\nabla \widetilde{\Phi}\left(\frac{x+y}{2}\right)\biggr\rangle}_{{\bf R}^{2n}} + \widetilde{\Phi}(y) -
\frac{1}{C} \frac{\abs{x-y}^2}{\langle{x-y\rangle}} \\
= -f(x) + \biggl\langle{x-y,\nabla f\left(\frac{x+y}{2}\right)\biggr\rangle}_{{\bf R}^{2n}} + f(y) -
\frac{1}{C} \frac{\abs{x-y}^2}{\langle{x-y\rangle}},
\end{multline*}
and applying Taylor's formula we see that this expression does not exceed
$$
{\cal O}(1)\norm{f''}_{L^{\infty}} \frac{\abs{x-y}^2}{\langle{x-y\rangle}} - \frac{1}{C} \frac{\abs{x-y}^2}{\langle{x-y\rangle}} \leq
-\frac{1}{2C}\frac{\abs{x-y}^2}{\langle{x-y\rangle}},
$$
provided that $\norm{f''}_{L^{\infty}}$ is small enough. The proof can therefore be concluded as before, by an application of Schur's lemma.
\end{proof}

\medskip
\noindent
{\it Remark}. Let us notice that $H_{\widetilde{\Phi}}(\comp^n) = H_{\Phi}(\comp^n)$ as linear spaces, with the norms being equivalent, but not uniformly as
$h\rightarrow 0^+$. We observe also that the Lipschitz IR-manifold $\Lambda_{\widetilde{\Phi}}$ is close to $\Lambda_{\Phi}$, in the sense of Lipschitz graphs.

\bigskip
\noindent
It turns out that the natural symbol associated to the operator in (\ref{eq3.16}) is $a|_{\Lambda_{\widetilde{\Phi}}}$. Indeed, we have the following fundamental
quantization-multiplication formula, due to~\cite{Sj90},~\cite{CF}.

\begin{prop}
We have
\begin{align*}
\left({\rm Op}_{h}^w(a) u,v \right)_{H_{\widetilde \Phi}} =
\int a\left(x,\frac{2}{i}\frac{\partial \widetilde \Phi}{\partial x}(x) \right)u(x)\overline{v(x)}e^{-\frac{2}{h}\widetilde \Phi(x)}\, L(dx) +
\mathcal{O}(h)\norm{u}_{H_{\widetilde{\Phi}}} \,\norm{v}_{H_{\widetilde{\Phi}}},
\end{align*}
for $u,v\in H_{\widetilde \Phi}(\comp^n)$.
\end{prop}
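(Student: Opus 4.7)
Set $\theta_0(x):=\tfrac{2}{i}\partial_x\widetilde\Phi(x)$ and $a_0(x):=a(x,\theta_0(x))$. The plan is to extract the multiplication-by-$a_0$ contribution as the leading term and bound the remainder by $\mathcal{O}(h)$. The key geometric observation is that the point $(y,\theta)=(x,\theta_0(x))$ lies on the contour $\widetilde\Gamma_C(x)$ of (\ref{eq3.18}) (taking $y=x$), and is simultaneously the critical point of the phase $e^{i(x-y)\cdot\theta/h}$ together with the weight $e^{-2\widetilde\Phi/h}$.

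First I would establish a Fourier inversion formula adapted to $\widetilde\Phi$: for $u\in H_{\widetilde\Phi}(\comp^n)$,
$$
u(x) = \frac{1}{(2\pi h)^n}\int\!\!\!\!\int_{\widetilde\Gamma_C(x)} e^{\frac{i}{h}(x-y)\cdot\theta}u(y)\,dy\wedge d\theta.
$$
This is proved in parallel with Proposition 1.3.4: the contour $\widetilde\Gamma_C(x)$ can be deformed, via the family of good contours used in the proof of Theorem 1.4.3 and Stokes' theorem, to a contour of the type (\ref{eq2.10}), where the Gaussian computation (\ref{eq2.11}) together with the mean-value property for holomorphic functions reproduces $u(x)$. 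Multiplying by the scalar $a_0(x)$ and subtracting from (\ref{eq3.17}) yields the decomposition
$$
\left({\rm Op}_{h}^w(a)u,v\right)_{H_{\widetilde\Phi}} = \int a_0(x)u(x)\overline{v(x)}e^{-2\widetilde\Phi(x)/h}L(dx) + (Ru,v)_{H_{\widetilde\Phi}},
$$
where $R$ has the same kernel as in (\ref{eq3.17}) but with $a(\tfrac{x+y}{2},\theta)$ replaced by $r(x,y,\theta):=a(\tfrac{x+y}{2},\theta)-a_0(x)$. The task then reduces to showing $R = \mathcal{O}(h): H_{\widetilde\Phi}(\comp^n) \to L^2_{\widetilde\Phi}(\comp^n)$.

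To analyze $R$, Taylor expand the holomorphic symbol $a$ around $(x,\theta_0(x))$ to write $r = L + q$, where $L$ is linear in the displacements $(\tfrac{y-x}{2},\theta-\theta_0(x))$ and $|q|\leq C(|y-x|+|\theta-\theta_0(x)|)^2$, the bound saturating at $2\norm{a}_{L^\infty}$ for large arguments. Along $\widetilde\Gamma_C(x)$, the identity
$$
\theta-\theta_0(x) = \tfrac{2}{i}\bigl(\partial_x\widetilde\Phi(\tfrac{x+y}{2})-\partial_x\widetilde\Phi(x)\bigr) + \tfrac{i}{C}\frac{\overline{x-y}}{\langle x-y\rangle},
$$
combined with $\widetilde\Phi\in C^{1,1}$, gives $|\theta-\theta_0(x)|\leq C|y-x|$. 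Hence $|q|\leq C\min(|y-x|^2,1)$ on the contour, and the reduced kernel of the $q$-part of $R$ satisfies the Gaussian-type bound from the proof of Theorem 1.4.3 multiplied by the factor $\min(|y-x|^2,1)$. A direct Schur estimate (analogous to the one used to control $I_2$ in that proof) then yields an $\mathcal{O}(h)$ operator bound, the extra power of $h$ arising from the quadratic vanishing of $q$ at the critical point.

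The linear piece $L$ is handled by integration by parts: the identity $(y_j-x_j)e^{i(x-y)\cdot\theta/h} = ih\,\partial_{\theta_j}e^{i(x-y)\cdot\theta/h}$, combined with Stokes' theorem on $\widetilde\Gamma_C(x)$ (legitimate because $a$ is holomorphic in a complex neighborhood of $\Lambda_{\widetilde\Phi}$), transfers $\partial_\theta$ onto the symbol and gains a factor of $h$. After this, the integrand contains the bounded symbol $\partial_\theta a$ in place of a linear weight, and the standard Schur estimate of Theorem 1.4.3 gives an $\mathcal{O}(1)$ bound, so the full contribution of $L$ is $\mathcal{O}(h)$. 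Combining the two estimates, $(Ru,v)_{H_{\widetilde\Phi}} = \mathcal{O}(h)\norm{u}_{H_{\widetilde\Phi}}\norm{v}_{H_{\widetilde\Phi}}$. The principal technical obstacle will be this integration by parts step: on the $y$-parametrized contour $\widetilde\Gamma_C(x)$ the variable $\theta$ is tied to $y$, so one must carefully invoke the holomorphy of $a$ and deform in the $\theta$-direction so that Stokes' theorem applies with vanishing boundary contributions (these vanish thanks to the exponential decay built into the $\tfrac{i}{C}\overline{(x-y)}/\langle x-y\rangle$ term).
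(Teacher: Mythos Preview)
Your overall strategy---Taylor expand the symbol at $(x,\theta_0(x))$, bound the quadratic remainder by Schur, and treat the linear part separately---matches the paper's, and the Fourier inversion step and Schur estimate for the quadratic piece are fine.

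The gap is in your treatment of the linear piece $L$. You write $L$ as linear in $\bigl(\tfrac{y-x}{2},\,\theta-\theta_0(x)\bigr)$, but your integration-by-parts identity $(y_j-x_j)e^{i(x-y)\cdot\theta/h}=ih\,\partial_{\theta_j}e^{i(x-y)\cdot\theta/h}$ only addresses the $(y-x)$-component. For that component your argument works (indeed the contribution vanishes, which is the paper's remark that the $(\partial_x a)(x,\xi(x))\cdot\tfrac{y-x}{2}$ term ``drops out''). But the $(\theta-\theta_0(x))$-component, with coefficient $(\partial_\xi a)(x,\theta_0(x))$, is not of the form $(y_j-x_j)\times(\hbox{holomorphic})$. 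The analogous identity $\theta_j e^{i(x-y)\cdot\theta/h}=ih\,\partial_{y_j}e^{i(x-y)\cdot\theta/h}$ leaves behind the constant-in-$(y,\theta)$ piece $-\theta_{0,j}(x)$ times a bounded symbol, which is ${\cal O}(1)$, not ${\cal O}(h)$, as an operator. Along the contour one does have $|\theta-\theta_0(x)|\le C|y-x|$, but a single power of $|y-x|$ in a Schur estimate only yields ${\cal O}(h^{1/2})$.

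The paper closes this gap by a different mechanism: it recognizes that quantizing the $(\theta-\xi(x))$-linear term produces the first-order operator $(\partial_\xi a)(x,\xi(x))\cdot(hD_x-\xi(x))$, which is \emph{not} ${\cal O}(h)$ on $H_{\widetilde\Phi}$, and then integrates by parts in $x$ in the bilinear form $(\,\cdot\,,v)_{H_{\widetilde\Phi}}$. The key identity is $(-hD_{x_j}-\xi_j(x))\,e^{-2\widetilde\Phi(x)/h}=0$, an immediate consequence of $\xi(x)=\tfrac{2}{i}\partial_x\widetilde\Phi(x)$, which annihilates the main term; what survives is $h$ times a derivative landing on the Lipschitz factor $(\partial_{\xi_j}a)(x,\xi(x))$ or on $\overline{v}$, giving ${\cal O}(h)\norm{u}_{H_{\widetilde\Phi}}\norm{v}_{H_{\widetilde\Phi}}$. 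This exploitation of the weight in the outer $x$-integral is the idea missing from your proposal.
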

\begin{proof}
We represent the operator ${\rm Op}_h^w(a)$ as in (\ref{eq3.17}) with the contour (\ref{eq3.18}), and Taylor expand $a$, writing
$\xi(x) = \frac{2}{i}\frac{\partial \widetilde \Phi}{\partial x}(x)$,
\begin{multline*}
a\left(\frac{x+y}{2},\theta \right) = a(x,\xi(x)) + (\partial_\xi a)(x,\xi(x))(\theta-\xi(x)) \\ +
(\partial_x a)(x,\xi(x))\left(\frac{y-x}{2} \right) + \mathcal{O}(|y-x|^2) + \mathcal{O}(|\theta - \xi(x)|^2).
\end{multline*}
Here the remainder terms are both ${\cal O}(\abs{x-y}^2)$ along the contour $\widetilde{\Gamma}_C(x)$, and therefore, in view of Schur's lemma,
their contribution gives rise to an operator of the norm ${\cal O}(h): H_{\widetilde{\Phi}}(\comp^n) \to L^2_{\widetilde{\Phi}}(\comp^n)$. Next, observing
that the term $(\partial_x a)(x,\xi(x))\left(\frac{y-x}{2} \right)$ drops out, when passing to the quantizations, we conclude that
$$
{\rm Op}_h^w(a) = a(x,\xi(x)) + (\partial_\xi a)(x,\xi(x))\cdot(hD_x-\xi(x)) + R,
$$
where
$$
R = {\cal O}(h): H_{\widetilde{\Phi}}(\comp^n) \to L^2_{\widetilde{\Phi}}(\comp^n).
$$
It remains to estimate the integral
\begeq
\label{eq3.19}
\int \left(\partial_{\xi_j}a\right)(x,\xi(x))\left(\left(hD_{x_j} - \xi_j(x)\right)u(x)\right)\, \overline{v(x)} e^{-2\widetilde{\Phi}(x)/h}\, L(dx),
\quad 1\leq j\leq n,
\endeq
and since the function $\left(\partial_{\xi_j}a\right)(x,\xi(x))$ is Lipschitz, we can integrate by parts in (\ref{eq3.19}), getting
${\cal O}(h)\norm{u}_{H_{\widetilde{\Phi}}} \,\norm{v}_{H_{\widetilde{\Phi}}}$ plus the term
$$
\int \left(\partial_{\xi_j}a\right)(x,\xi(x)) u(x)\overline{v(x)} \left(-hD_{x_j} - \xi_j(x)\right)e^{-2\widetilde{\Phi}(x)/h}\, L(dx) = 0.
$$
This completes the proof.
\end{proof}

\bigskip
\noindent
We shall finish with the following general idea suggested by the discussion above: given an $h$--pseudo\-diffe\-ren\-tial operator of the form ${\rm Op}_h^w(a)$, with $a$
holomorphic in a tubular neighborhood of $\Lambda_{\Phi}$, try to find an IR-manifold $\Lambda_{\widetilde \Phi}$ close to $\Lambda_\Phi$ so that the operator
$$
{\rm Op}_h^w(a): H_{\widetilde{\Phi}}(\comp^n) \to H_{\widetilde{\Phi}}(\comp^n)
$$
acquires some improved properties, such as the invertibility, ellipticity, normality, etc. We refer to the works \cite{DeSjZw}, \cite{Hi04}, \cite{HiSj1},
\cite{HiSjVu07}, \cite{HiSj15}, \cite{MeSj1}, \cite{MeSj2}, where implementations of this idea have led to some precise results in the spectral theory of
semiclassical non-selfadjoint operators. It may also be interesting to compare this idea with the recent developments around Carleman estimates with
limiting Carleman weights for second order elliptic differential operators, see~\cite{KSjU}.

\chapter{Analytic microlocal analysis using holomorphic functions with exponential weights}

\section{Introduction}\label{}
\setcounter{equation}{0}
There are several approches to analytic microlocal analysis:
\begin{itemize}
\item One very natural approach consists in adapting the classical
  theory of pseudodifferential operators on the real domain to the
  analytic category. The basic calculus was developed by L.~Boutet de
  Monvel and P.~Kr\'ee \cite{BoKr67}. K.G.~Andersson \cite{An70} and L.~H\"ormander
  \cite{Ho71c} studied propagation of analytic singularities. The work
  \cite{Ho71c} also introduced the analytic wave
  front set of distributions, a corresponding notion in the framework
  of hyperfunctions had previously been introduced by M.~Sato (see
  \cite{SaKaKa71}). The two works \cite{An70}, \cite{Ho71c} use special sequences of
  cutoff functions, remedying for the lack of analytic functions with
  compact support. Such special sequences have an earlier history,
  see L.~Ehrenpreis \cite{Ehr60}, S.~Mandelbrojt
  \cite{Ma42, Ma52}. The book \cite{Tr80a} of
  F.~Treves gives the theory of analytic pseudodifferential operators,
  with the help of such cutoffs.
\item A second approach is based on the representation of
  distributions and more generally hyperfunctions as sums of boundary
  values of holomorphic functions. The main work in this direction is
  the one of M.~Sato, T.~Kawai and M.~Kashiwara \cite{SaKaKa71}.
\item A third approach is to work with Fourier transforms that have
  been modified by the introduction of a Gaussian (avoiding the use of
  the special cutoffs mentioned above. Such transforms come under
  different names: FBI, Bargmann-Segal, Gabor, wavepacket
  .... transforms.  Microlocal properties are now described in terms
  of exponential growth/decay of the transformed functions. In the
  context of analytic microlocal analysis they were introduced and
  used by D.~Iagolnitzer, H.~Stapp \cite{IaSt69}, J.~Bros, Iagolnitzer
  \cite{BrIa75}. This is the method we follow here. See \cite{Sj82, Ma02a}.
\end{itemize}

\medskip
\noindent
The aim of this part of the text is to explain the basic ingredients in the approach of \cite{Sj82}, that was preceded by some work on
propagation of analytic singularities for boundary value problems, see \cite{Sj81}. The main observation is that an FBI-transform
produces holomorphic functions whose exponential growth rate reflect the regularity and that such transforms are Fourier integral operators
with complex phase functions. This leads to a calculus of Fourier integral operators and pseudodifferential operators in the complex
domain via a Egorov theorem. In this calculus oscillatory integrals are systematically replaced by contour integrals, leading to ``Cauchy
integral operators''.

\medskip
\noindent
This part of the text will split roughly into 4 unequal parts:
\begin{itemize}
\item In Sections 2.2--2.5 we discuss pseudodifferential operators and
  Fourier integral operators acting on exponentially weighted spaces of
  holomorphic functions.
\item In Sections 2.6, 2.7 we introduce FBI (generalized Bargmann-)
  transforms and the analytic wave front set of a distribution.
\item The sections 2.8, 2.9 are devoted to some applications: propagation
  of singularities, construction of exponentially accurate quasi-modes
  for non-self-adjoint differential operators.
\item In Section 2.10 we give a very quick review of related developments.
\end{itemize}

\section{Classical analytic symbols and pseudodifferential operators.}
\label{cas}
\setcounter{equation}{0}
Let $\Omega \subset {\bf C}^n$ be open, $\phi \in C(\Omega ;{\bf R})$. By definition, the function $u=u(z;h)$ on $\Omega \times
  ]0,h_0[$ belongs to $H_\phi^{\mathrm{loc}}(\Omega )$ if
\begin{itemize}
\item $u(\cdot ;h)\in \mathrm{Hol}(\Omega )$, for all $h$, where
  $\mathrm{Hol\,}(\Omega )$ denotes the space of holomorphic functions
  on $\Omega $.
\item $\forall K\Subset \Omega $, $\varepsilon >0$, $\exists C>0$ such
  that $|u(z;h)|\le Ce^{(\phi (z)+\varepsilon )/h}$, $z\in K$.
\end{itemize}
When $u\in H_0(\Omega )$, we say that $u$ is an analytic symbol. When
$u={\cal O}(h^{-m})$ locally uniformly on $\Omega $, we say that $u$
is of finite order $m\in {\bf R}$.

\medskip
\noindent
We frequently identify equivalent elements of $H_\phi^\mathrm{loc}(\Omega )$, where the equivalence
$u\sim v$ of $u,v\in H^\mathrm{loc}_\phi (\Omega )$ means that there exists $C^0(\Omega )\ni \phi _0<\phi $, such that
$u-v\in H^\mathrm{loc}_{\phi _0}(\Omega )$. When $\Omega $ is pseudoconvex and the weights are pluri-subharmonic, we can represent equivalence
classes by functions $u\in L^2_\mathrm{loc}(\Omega )$ for which $\|e^{-\phi /h}u\|_{L^2(K)}\le C_{\varepsilon ,K}e^{\varepsilon /h}$,
$\|e^{-\phi_0 /h}\overline{\partial }u\|_{L^2(K)}\le C_{\varepsilon ,K}e^{\varepsilon /h}$
$\forall$ $\varepsilon >0$, $K\Subset \Omega $. Indeed by applying H\"ormander's method of solving the $\overline{\partial }$ equation it
is easy to make such a function $u$ holomorphic by adding a correction which is locally exponentially small compared to $e^{\phi /h}$.

\medskip
\noindent
By $H_{\phi ,x_0}$ we denote the intersection of all spaces $H_\phi (\Omega )$ where $\Omega $ is a small neighborhood of
$x_0\in {\bf C}^n$ and $\phi $ is defined in some fixed neighborhood of $x_0$. We have a corresponding equivalence relation.

\paragraph{Classical analytic symbols} (Boutet de Monvel, Kr\'ee \cite{BoKr67}). We restrict the attention to symbols of order $0$. Let
$a_k\in \mathrm{Hol\,}(\Omega)$, $k=0,1,...$ and assume that for every $\widetilde{\Omega }\Subset \Omega $, $\exists C=C_{\widetilde{\Omega }}>0$ such that
\begin{equation}
\label{cas.1}
|a_k(z)|\le C^{k+1}k^k,\ z\in \widetilde{\Omega }.
\end{equation}
$a=\sum_0^\infty a_k(z)h^k$ is called a classical analytic symbol.

\medskip
\noindent
We have a realization of $a$ on $\widetilde{\Omega }$ by
$$
a_{\widetilde{\Omega }}(z;h)=\sum_{0\le k\le (eC_{\widetilde{\Omega }}h)^{-1}}a_k(z)h^k.
$$
For $0\le k\le (eC_{\widetilde{\Omega }}h)^{-1}$ we have
$$
|a_k(z)|h^k\le C_{\widetilde{\Omega }} (C_{\widetilde{\Omega}}hk)^k\le C_{\widetilde{\Omega }}e^{-k},
$$
so the defining sum above converges geometrically and
$|a_{\widetilde{\Omega }}(z;h)|\le C_{\widetilde{\Omega }}e/(e-1)$.

\medskip
\noindent
If $\widehat{\Omega }\supset \widetilde{\Omega }$ is another
relatively compact subset of $\Omega $, then $a_{\widehat{\Omega }}$
and $a_{\widetilde{\Omega }}$ are equivalent on $\widetilde{\Omega }$.
It is sometimes convenient to consider classical symbols of the form
$$a=\sum_0^\infty a_k(z)h^k,\ a_k\in \mathrm{Hol\,}(\Omega )$$ without
the growth condition (\ref{cas.1}).

\medskip
\noindent
Let
$$p(x,\xi ;h)=\sum_0^\infty h^kp_k(x,\xi ),\ q(x,\xi ;h)=\sum_0^\infty
h^kq_k(x,\xi )$$
be classical symbols defined near $(x_0,\xi _0)\in {\bf C}^{2n}$. Denote
by $p(x,hD;h)$, $q(x,hD;h)$ the corresponding formal pseudodifferential operators. The formal
composition of $p$ and $q$ is defined by
$$
p\# q=\sum_{\alpha \in {\bf N}^n}\frac{{h^{\abs{\alpha}}}}{\alpha !}\partial _\xi
^\alpha p(x,\xi ;h) D_x^\alpha q(x,\xi ;h),
$$
which is a finite sum for each power of $h$. Here, we use standard
PDE-notation, $D_x=i^{-1}\partial _x$,
\[
\begin{split}
\partial _x^\alpha =\partial _{x_1}^{\alpha _1}\cdots\partial
_{x_n}^{\alpha _n},\ |\alpha |=|\alpha |_{\ell^1}=\alpha _1+...+\alpha
_{n},\hbox{ for }\alpha =(\alpha _1,...,\alpha _n)\in {\bf N}^n.
\end{split}
\]

\medskip
\noindent
When $p,q$ are polynomials in $\xi $, the differential operators $p(x,hD;h)$,
$q(x,hD;h)$ are well defined and
$$
p(x,D_x;h)\circ q(x,hD_x;h)=(p\# q)(x,hD;h).
$$
If $r$ is a third symbol, also polynomial in $\xi $, it follows that
\begin{equation}\label{cas.2}
(p\# q)\#r=p\# (q\# r).
\end{equation}
In general, we can approximate $p,q,r$ with finite Taylor polynomials
at any given point and see that we still have (\ref{cas.2}).

\medskip
\noindent
To $p$, we associate
\[\begin{split}&A(x,\xi ,hD_x;h)=p(x,\xi +hD_x;h)=\\ &\sum_{\alpha
}\frac{h^\alpha }{\alpha !}\partial _\xi ^\alpha p(x,\xi )D_x^\alpha
=\sum_{k=0}^\infty h^kA_k(x,\xi ,D_x),\end{split}\]
where
\begin{equation}\label{cas.3}
A_k=\sum_{\nu +|\alpha |=k}\frac{1}{\alpha !}(\partial _\xi ^\alpha
p_\nu )(x,\xi )D_x^\alpha
\end{equation}
is a differential operator of order $\le k$.

\medskip
\noindent
Formally, $A=e^{-ix\cdot \xi /h}\circ p(x,hD_x;h)\circ e^{ix\cdot
\xi /h}$ which is exact and well defined, when $p$ is a polynomial in
$\xi $. Let $B=q(x,\xi +hD_x;h)=\sum_0^\infty h^\ell B_\ell$. Then
$C=A\circ B$ is well defined by $C=\sum_0^\infty h^mC_m$,
$C_m=\sum_{k+\ell =m}A_k\circ B_\ell$. By Taylor approximation with
polynomials in $\xi $, we see that
$$
C=r(x,\xi +hD_x;h),\text{ if }r=p\# q.
$$
\paragraph{Quasi-norms} Let $\Omega _t\Subset {\bf C}^{2n}$, $0\le t\le
t_0$, $t_0>0$ be a family of open neighborhoods of a point $(x_0,\xi
_0)$ such that
$$(y,\xi )\in \Omega _s\hbox{ and }|x-y|_{\ell ^\infty }<t-s \Longrightarrow
(x,\xi )\in \Omega _t,$$
whenever $0\le s\le t\le t_0$. Here,
$$
|x|_{\ell^\infty }=\sup |x_j|,\ x=(x_1,...,x_n)\in {\bf C}^n.
$$
Then $D_x^\alpha $ is a bounded
operator: $B(\Omega _t)\to B(\Omega _s)$ where $B(\Omega )$ denotes
the space of bounded holomorphic functions on $\Omega $. Moreover, by
the Cauchy inequalities,
\[
\| D_x^\alpha \|_{t,s}:=\| D_x^\alpha \|_{{\cal L}(B(\Omega _t),B(\Omega _s))}\le
\frac{\alpha !}{(t-s)^{|\alpha |}}\le \frac{C_0^{|\alpha |}|\alpha
  |^{|\alpha |}}{(t-s)^{|\alpha |}},
\]
for some constant $C_0>0$.

\medskip
\noindent
If $\Omega _{t_0}$ is a relatively compact subset of the domain of
definition of $p$, then on $\Omega _{t_0}$,
$$
|\partial _\xi^\alpha p_\nu  |\le C^{1+\nu +|\alpha |}\nu ^\nu \alpha
! .
$$
Hence, with a new constant
$$
\| \frac{1}{\alpha !}\partial _\xi ^\alpha pD_x^\alpha  \|_{t,s}\le
C^{1+\nu +|\alpha |}\nu ^\nu \frac{|\alpha |^{|\alpha
    |}}{(t-s)^{|\alpha |}}.
$$

\medskip
\noindent
The number of terms in (\ref{cas.3}) is $\le (1+k)^{n+1}$, so with a
new constant $C>0$, we have
\begin{equation}\label{cas.4}
\|A_k\|_{t,s}\le \frac{C^{k+1}k^k}{(t-s)^k},\ 0\le s<t\le t_0.
\end{equation}

\medskip
\noindent
Conversely, if $p$ is a classical symbol such that (\ref{cas.4})
holds for some $C>0$, then $p$ is a classical analytic symbol near $(x_0,\xi _0)$. In fact,
since $p_k=A_k(1)$, we get for some new $C>0$ that
\begin{equation}\label{cas.5}
\sup_{\Omega _{t_0/2}}|p_k|\le C^{k+1}k^k.
\end{equation}

\medskip
\noindent
Put $f(A)=(f_k(A))_{k=0}^\infty $, where $f_k(A)$ is the smallest
constant $\ge 0$ such that
$$
\|A_k\|_{t,s}\le f_k(A)k^k(t-s)^{-k},\ 0\le s<t\le t_0.
$$
When (\ref{cas.4}) holds, $f_k(A)$ is of at most exponential
growth.

\medskip
\noindent
Let $B=\sum_{0}^\infty h^kB_k$ be an operator of the same
type, so that $B_k$ is a differential operator of order $\le k$.
\begin{lemma}\label{cas1}
If $C=A\circ B$, then $f_k(C)\le \sum_{\nu +\mu =k}f_\nu (A)f_\mu (B)$
or in other terms, $f(C)\le f(A)*f(B)$.
\end{lemma}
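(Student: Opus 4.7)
The plan is to control $C_m = \sum_{\nu+\mu=m} A_\nu \circ B_\mu$ term by term, using the scale of spaces $B(\Omega_t)$ and an intermediate point $u \in [s,t]$. Since $B_\mu$ is a differential operator acting boundedly $B(\Omega_t) \to B(\Omega_u)$ and $A_\nu$ acts boundedly $B(\Omega_u) \to B(\Omega_s)$, the submultiplicativity of operator norms gives
\[
\|A_\nu \circ B_\mu\|_{t,s} \le \|A_\nu\|_{u,s}\,\|B_\mu\|_{t,u} \le f_\nu(A) f_\mu(B) \frac{\nu^\nu}{(u-s)^\nu} \cdot \frac{\mu^\mu}{(t-u)^\mu}.
\]

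The next step is to optimize the choice of $u \in (s,t)$. Elementary calculus (or a weighted AM--GM inequality applied to $(u-s)^\nu (t-u)^\mu$) shows that the minimum of $(u-s)^{-\nu}(t-u)^{-\mu}$ is attained at $u-s = (t-s)\nu/(\nu+\mu)$, $t-u = (t-s)\mu/(\nu+\mu)$, and equals
\[
\frac{(\nu+\mu)^{\nu+\mu}}{\nu^\nu\,\mu^\mu\,(t-s)^{\nu+\mu}}.
\]
Plugging this back, the factors $\nu^\nu$ and $\mu^\mu$ cancel, leaving
\[
\|A_\nu \circ B_\mu\|_{t,s} \le f_\nu(A)f_\mu(B)\,\frac{(\nu+\mu)^{\nu+\mu}}{(t-s)^{\nu+\mu}}.
\]

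Summing over $\nu+\mu = m$ and comparing with the defining inequality for $f_m(C)$ yields
\[
f_m(C) \le \sum_{\nu+\mu=m} f_\nu(A) f_\mu(B),
\]
which is exactly the convolution inequality $f(C) \le f(A) * f(B)$. The only slightly non-routine point is the optimization step; once the correct choice of the intermediate $u$ is identified, the powers $\nu^\nu \mu^\mu$ combine precisely with $(\nu+\mu)^{\nu+\mu}$ so that the quasi-norm structure is preserved under composition. No hypothesis on the finiteness of $f_\nu$ or $f_\mu$ is needed --- if both are finite the bound is sharp in the stated form, and if either is infinite the inequality is trivially true.
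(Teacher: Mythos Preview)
Your proof is correct and follows essentially the same approach as the paper: both decompose $C_m=\sum_{\nu+\mu=m}A_\nu\circ B_\mu$, bound each term via an intermediate point in the scale $B(\Omega_t)$, and then choose that point to be $s+\frac{\nu}{\nu+\mu}(t-s)$ so that the factors $\nu^\nu\mu^\mu$ combine into $(\nu+\mu)^{\nu+\mu}$. The paper's argument is identical up to notation (it uses $r$ in place of your $u$).
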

\begin{proof} We have
$C_k=\sum_{\nu +\mu =k}A_\nu \circ B_\mu $ and for $0\le s<r<t\le
t_0$:
$$
\|A_\nu \circ B_\mu \|_{t,s}\le f_\nu (A)f_\mu (B)\frac{\nu ^\nu \mu ^\mu
}{(r-s)^\nu (t-r)^\mu }.
$$
Choose $r$ such that
$$
r-s=\frac{\nu }{\nu +\mu }(t-s),\ t-r=\frac{\mu }{\nu +\mu }(t-s).
$$
Then
$$\|A_\nu \circ B_\mu \|_{t,s}\le f_\nu (A)f_\mu (B) \frac{(\nu +\mu
  )^{\nu +\mu } }{(t-s)^{\nu +\mu }},$$
$$\|C_k\|_{t,s}\le \left(\sum_{\nu +\mu =k}f_\nu (A)f_\mu (B)
\right)\frac{k^k}{(t-s)^k}.$$\end{proof}

\medskip
\noindent
For $\rho >0$, put
$$\|A\|_\rho =\sum_0^\infty \rho ^kf_k(A).$$
Then (\ref{cas.4}) holds iff $\|A\|_\rho <\infty $ for $\rho >0$ small
enough.
\begin{lemma}\label{cas2}
Let $C=A\circ B$. If $\|A\|_\rho ,\, \|B\|_\rho <\infty $, then
$\|C\|_\rho <\infty $ and we have $\|C\|_\rho \le \|A\|_\rho \|B\|_\rho $.
\end{lemma}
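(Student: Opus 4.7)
The plan is to reduce the statement directly to Lemma~\ref{cas1} by recognizing that the quasi-norm $\|\cdot\|_\rho$ is nothing but the generating-function evaluation of the sequence $f(A)=(f_k(A))$ at the point $\rho$, which turns the convolution bound of Lemma~\ref{cas1} into a product bound.

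First I would recall Lemma~\ref{cas1}, which gives the termwise inequality
\[
f_k(C) \le \sum_{\nu+\mu=k} f_\nu(A)\, f_\mu(B),\qquad k=0,1,2,\ldots
\]
Multiplying by $\rho^k\ge 0$ and summing over $k\ge 0$, I obtain
\[
\|C\|_\rho \;=\; \sum_{k=0}^\infty \rho^k f_k(C) \;\le\; \sum_{k=0}^\infty \rho^k \sum_{\nu+\mu=k} f_\nu(A) f_\mu(B).
\]
Since all terms are non-negative, Fubini/Tonelli allows rearranging the double sum:
\[
\sum_{k=0}^\infty \sum_{\nu+\mu=k} \rho^{\nu}\rho^{\mu} f_\nu(A) f_\mu(B)
= \Bigl(\sum_{\nu=0}^\infty \rho^\nu f_\nu(A)\Bigr)\Bigl(\sum_{\mu=0}^\infty \rho^\mu f_\mu(B)\Bigr)
= \|A\|_\rho \|B\|_\rho.
\]
This gives both the finiteness and the desired inequality in one stroke, since the hypothesis $\|A\|_\rho,\|B\|_\rho<\infty$ makes the right-hand side finite.

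There is essentially no obstacle here: the lemma is a formal consequence of Lemma~\ref{cas1} together with the observation that $\|\cdot\|_\rho$ converts the additive convolution structure on the level of the $f_k$'s into a multiplicative structure. The only thing worth checking is the legitimacy of interchanging the two summations, which is automatic from non-negativity (Tonelli). Thus the proof is just the two displayed lines above, preceded by a one-sentence invocation of Lemma~\ref{cas1}.
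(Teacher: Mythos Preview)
Your proof is correct and follows essentially the same approach as the paper: invoke Lemma~\ref{cas1}, multiply by $\rho^k$, and observe that the $\ell^1$-norm of a (non-negative) convolution is bounded by the product of the $\ell^1$-norms. The paper states this in one line by writing the pointwise inequality $(\rho^k f_k(C))\le (\rho^k f_k(A))*(\rho^k f_k(B))$ and then passing to $\ell^1$, which is exactly your computation with the Tonelli justification made explicit.
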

\begin{proof}
By Lemma \ref{cas1}, we have pointwise with respect to $k$:
$$
(\rho ^kf_k(C))_0^\infty \le (\rho ^kf_k(A))_0^\infty * (\rho ^kf_k(B))_0^\infty
$$
and we have the corresponding inequality for the $\ell ^1$-norms.\end{proof}

\medskip
\noindent
If $p(x,\xi ;h)$ is a classical symbol on a neighborhood of
$\overline{\Omega }_{t_0}$, we put $\|p\|_\rho = \|A\|_\rho  $. If $p$
is a classical analytic symbol then there exists $\rho >0$ such that $\|p\|_\rho <\infty $
and similarly for $q$ corresponding to $B$. Since $p\# q$ corresponds
to $A\circ B$, we obtain $\|p\# q\|_\rho \le \|p\|_\rho \|q\|_\rho  $
and we conclude that $p\# q$ is a classical analytic symbol in $\Omega
_{t_0}$. Next we give a semi-classical formulation of a fundamental
result of L.~Boutet de Monvel, P.~Kr\'ee \cite{BoKr67}:
\begin{theo}\label{cas3}
Let $p$ be an elliptic classical analytic symbol ($p_0\ne 0$) on a neighborhood of
$\overline{\Omega }_{t_0}$ and let $q$ be the classical symbol given by
$p\# q=1$. Then $q$ is a classical analytic symbol in $\Omega _{t_0}$.
\end{theo}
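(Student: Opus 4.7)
The plan is to invert $p$ formally via a Neumann series in the Banach algebra of symbols of finite $\|\cdot\|_\rho$-norm provided by Lemma 2.2.2. Let $A=p(x,\xi +hD_x;h)=\sum h^kA_k$ be the operator associated to $p$ through (2.2.3); by hypothesis there exists $\rho_0>0$ with $\|A\|_{\rho_0}<\infty$. Ellipticity means that $p_0$ is non-vanishing on $\overline{\Omega}_{t_0}$, so $E:=1/p_0$ is holomorphic and bounded there; regarded as a formal operator concentrated at order $h^0$ (pure multiplication), we have $f_0(E)=\sup|1/p_0|$ and $f_k(E)=0$ for $k\ge 1$, hence $\|E\|_\rho =f_0(E)<\infty$ for every $\rho>0$. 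I would then factor
\[
A=A_0(I+U),\qquad U:=E\circ (A-A_0)=\sum_{k\ge 1}h^k(EA_k),
\]
which is again of the admissible form (each $EA_k$ is a differential operator of order $\le k$), and whose $h^0$-component vanishes.

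Lemma 2.2.1 gives $f_k(U)\le f_0(E)f_k(A)$ for $k\ge 1$. Hence, for $0<\rho \le \rho_0$, using $(\rho/\rho_0)^k\le \rho/\rho_0$ when $k\ge 1$,
\[
\|U\|_\rho =\sum_{k\ge 1}\rho^k f_k(U)\le f_0(E)\sum_{k\ge 1}(\rho/\rho_0)^k \rho_0^k f_k(A)\le \frac{\rho}{\rho_0}\, f_0(E)\,\|A\|_{\rho_0},
\]
which tends to $0$ as $\rho \to 0^+$. Fix $\rho_1\in (0,\rho_0]$ with $\|U\|_{\rho_1}<1$. The crucial point here is that $U$ has vanishing $h^0$-component, which is what lets the series start at $k=1$ and makes the scaling bite; this is the only place where any real analysis is needed.

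By Lemma 2.2.2, formal operators of the form $\sum h^k C_k$ with $C_k$ differential of order $\le k$ and finite $\|\cdot\|_{\rho_1}$ form a Banach algebra, so the Neumann series $(I+U)^{-1}=\sum_{k\ge 0}(-U)^k$ converges with $\|(I+U)^{-1}\|_{\rho_1}\le (1-\|U\|_{\rho_1})^{-1}$. Setting $B=(I+U)^{-1}\circ E$, one has $A\circ B=I$ and $\|B\|_{\rho_1}<\infty$. Translating back through (2.2.3), the corresponding symbol $q$ satisfies $p\# q=1$ and, by (2.2.5) applied to its coefficients $q_k$, is classical analytic on $\Omega_{t_0}$ (after the innocuous halving of $t_0$ built into (2.2.5)). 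Since the equations for the $q_k$ are triangular in $k$ and uniquely solvable as soon as $p_0\ne 0$, the $q$ so constructed coincides with the one in the statement. I do not foresee a real obstacle: once the quasi-norm framework is in place, the argument is the standard Banach-algebra inversion, with the only delicate ingredient being the elementary scaling estimate above.
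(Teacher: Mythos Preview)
Your argument is correct and follows essentially the same Neumann-series strategy as the paper: peel off the leading part, observe that the remainder starts at order $h^1$ so that its quasi-norm $\|\cdot\|_\rho$ is $<1$ for $\rho$ small, and invert via a geometric series using Lemma~\ref{cas2}. The only cosmetic difference is that the paper works at the symbol level, composing with $q_0=1/p_0$ as a \emph{symbol} to write $p\#q_0=1-r$ and then $q=q_0\#(1+r+r\#r+\cdots)$, whereas you work at the operator level, composing with $E=1/p_0$ as a pure multiplication operator (which is not itself of the form $q_0(x,\xi+hD_x)$) and then invoking uniqueness of the formal inverse to identify your $B$ with the operator associated to $q$; that last step is needed precisely because you left the image of the symbol-to-operator map, and it is handled correctly.
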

\begin{proof}
Let $q_0=1/p_0$, so that $q_0$ is a classical analytic symbol. Then $p\# q_0=1-r$
where $r$ is a classical analytic symbol of order $-1$ in the sense that its asymptotic
expansion starts with a term in $h$. Consequently $\|r\|_\rho <1/2
$ if $\rho >0$ is small enough. We have
$$
q=q_0\# (1+r+r\#r+...),
$$
so that
$$
\|q\|_\rho \le \|q_0\|_\rho (1+\|r\|_\rho + \|r\|_\rho ^2+...)\le
2\|q_0\|_\rho <\infty . $$
\end{proof}

\section{Stationary phase -- steepest descent}
\setcounter{equation}{0}
Let $B=B_{{\bf R}^n}(0,1)$ be the open unit ball in ${\bf R}^n$ and put
$$
\widetilde{B}=\{\lambda x;\, x\in \overline{B},\
\lambda \in \comp,\, \abs{\lambda}\leq 1\}.
$$
\begin{theo}
\label{stp1}
There exist a constant $C>0$ depending only on the dimension, such
that for all $N\in {\bf N}$, $0<h\le 1$, $u\in
\mathrm{Hol\,}(\mathrm{neigh\,}(\widetilde{B}))$,
$$
\int_B e^{-x^2/(2h)}u(x)dx=\sum_{\nu =0}^{N-1}(2\pi
)^{\frac{n}{2}}h^{\frac{n}{2}+\nu }\frac{1}{\nu !}\left(\frac{1}{2}
  \Delta  \right)^\nu u(0)+R_N(h),
$$
where
$$
|R_N(h)|\le Ch^{\frac{n}{2}+N}(N+1)^{\frac{n}{2}}N! 2^N\sup_{\widetilde{B}}|u(z)|.
$$
\end{theo}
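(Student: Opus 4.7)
The plan is to Taylor expand $u$ at the origin to order $2N-1$, evaluate the Gaussian integral of the Taylor polynomial exactly via the standard moment formulas, and control the Taylor remainder by one-variable Cauchy estimates adapted to the radial geometry of $\widetilde{B}$.

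First, write $u(x)=T_{2N-1}(x)+R_{2N}(x)$ with $T_{2N-1}(x)=\sum_{|\alpha|<2N}\partial^{\alpha}u(0)\,x^{\alpha}/\alpha!$. Under $x\mapsto -x$ only even multi-indices $\alpha=2\beta$ survive in $\int_B e^{-x^2/(2h)}T_{2N-1}$. Extending the integration from $B$ to $\mathbb{R}^{n}$ (the tail contribution from $\{|x|>1\}$ being $O(e^{-1/(2h)})$ and absorbed into $R_{N}(h)$), the standard Gaussian moment
\[
\int_{\mathbb{R}^{n}}e^{-x^{2}/(2h)}\,x^{2\beta}\,dx=(2\pi h)^{n/2}\,h^{|\beta|}\,\frac{(2\beta)!}{2^{|\beta|}\,\beta!},
\]
combined with the combinatorial identity $\Delta^{\nu}/\nu!=\sum_{|\beta|=\nu}\partial^{2\beta}/\beta!$, yields exactly the claimed asymptotic sum $(2\pi)^{n/2}\sum_{\nu<N}h^{n/2+\nu}\frac{1}{\nu!}\bigl(\Delta/2\bigr)^{\nu}u(0)$.

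For the remainder $R_{2N}$, the key geometric input from the definition of $\widetilde{B}$ is that for every real unit vector $\omega\in\mathbb{R}^{n}$ the function $g_{\omega}(\lambda):=u(\lambda\omega)$ is holomorphic on a neighborhood of the closed unit disc and satisfies $|g_{\omega}(\lambda)|\le M:=\sup_{\widetilde{B}}|u|$ there, simply because $\lambda\omega\in\widetilde{B}$ whenever $|\lambda|\le1$. Applying the one-variable Cauchy formula for the Taylor remainder of $g_{x/|x|}$ at the real point $\lambda=|x|$ gives
\[
R_{2N}(x)=\frac{|x|^{2N}}{2\pi i}\oint_{|\lambda|=1}\frac{g_{x/|x|}(\lambda)}{\lambda^{2N}(\lambda-|x|)}\,d\lambda,
\qquad
|R_{2N}(x)|\le\frac{M\,|x|^{2N}}{1-|x|}\quad (|x|<1),
\]
which on the set $\{|x|\le 3/4\}$ simplifies to the clean bound $|R_{2N}(x)|\le 4M|x|^{2N}$.

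The contribution from $\{|x|\le 3/4\}$ is then controlled by extending to $\mathbb{R}^{n}$:
\[
4M\int_{\mathbb{R}^{n}}e^{-x^{2}/(2h)}|x|^{2N}\,dx=4M\,|S^{n-1}|\,2^{N+n/2-1}\,h^{N+n/2}\,\Gamma(N+n/2),
\]
and the Stirling-type estimate $\Gamma(N+n/2)\le C_{n}N!(N+1)^{n/2}$ converts this into the desired $Ch^{n/2+N}(N+1)^{n/2}N!\,2^{N}M$. The main technical obstacle, in my view, lies in the collar $\{3/4<|x|<1\}$, where the Cauchy bound for $R_{2N}$ degenerates as $|x|\to 1$ while the Gaussian only supplies the super-polynomial factor $e^{-x^{2}/(2h)}\le e^{-9/(32h)}$; one must combine this exponential decay with a refined estimate of $R_{2N}$ (for instance a contour-truncated Cauchy representation, or a direct manipulation of the partial-sum form $R_{2N}(x)=\sum_{k\ge 2N}P_{k}(x)$ with $|P_{k}(x)|\le M|x|^{k}$) so that the resulting exponentially small term is uniformly compatible with the factors $2^{N}$, $N!$, $(N+1)^{n/2}$ in the final bound, for all $N\in\mathbb{N}$ and $h\in(0,1]$.
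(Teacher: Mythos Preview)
The paper omits the proof of this theorem, referring only to \cite{Sj82}, Chapter~2, so there is no in-text argument to compare with. Your overall strategy---Taylor expand, evaluate Gaussian moments exactly, bound the remainder by one-variable Cauchy estimates along rays---is the natural one and your main-term computation is correct.

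There are, however, two places where the argument is not yet closed. First, the claim that the tail $\int_{|x|>1}e^{-x^2/(2h)}T_{2N-1}(x)\,dx$ is ``$O(e^{-1/(2h)})$ and absorbed into $R_N(h)$'' is too casual: when $N$ is of order $1/h$ the implied constant in that $O$ blows up and is \emph{not} automatically dominated by $Ch^{n/2+N}(N+1)^{n/2}N!\,2^N M$. The honest bound is obtained termwise: since $|P_k(x)|\le M|x|^k$ and $|x|^{2\nu}\le |x|^{2N}$ for $|x|\ge 1$, $\nu<N$, the whole tail is at most $N\cdot M\int_{\mathbb{R}^n}e^{-x^2/(2h)}|x|^{2N}\,dx$, and this Gaussian moment together with $\Gamma(N+n/2)\le C_n N!(N+1)^{n/2-1}$ gives exactly the target.

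Second, and this is the point you yourself flag, the Cauchy-integral remainder bound $|R_{2N}(x)|\le M|x|^{2N}/(1-|x|)$ is useless on the collar because $1/(1-|x|)$ is not integrable, and the crude alternative $|R_{2N}(x)|\le(2N{+}1)M$ combined with $e^{-9/(32h)}$ genuinely fails to fit inside the target bound (minimize the right-hand side over $N$: it is of order $e^{-1/(2h)}$, and $9/32<1/2$). The clean fix is a Schwarz-lemma argument: for each unit vector $\omega$, the function $\lambda\mapsto R_{2N}(\lambda\omega)/\lambda^{2N}$ is holomorphic on the closed unit disc (the numerator vanishes to order $2N$ at $0$), and on $|\lambda|=1$ it is bounded by $|g_\omega(\lambda)|+\sum_{k<2N}|c_{k,\omega}|\le (2N{+}1)M$ since each Taylor coefficient satisfies $|c_{k,\omega}|\le M$. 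By the maximum principle,
\[
|R_{2N}(x)|\le (2N+1)\,M\,|x|^{2N}\qquad\text{for all }x\in\overline{B},
\]
and now a single Gaussian-moment estimate over all of $B$ (no splitting at $3/4$ needed) yields
\[
\Bigl|\int_B e^{-x^2/(2h)}R_{2N}(x)\,dx\Bigr|\le (2N{+}1)M\,\frac{|S^{n-1}|}{2}(2h)^{N+n/2}\Gamma\bigl(N+\tfrac{n}{2}\bigr)\le C_n\,M\,h^{n/2+N}(N+1)^{n/2}N!\,2^N,
\]
which is precisely the claimed bound. With these two adjustments your proof is complete.
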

We omit the proof and refer to \cite{Sj82}, Chapter 2.

\medskip
\noindent
{\bf Example 3.2}.
\label{stp2}
Consider
$$
J(h)=\left(\frac{h}{2\pi } \right)^n\iint_{|x|\le C_1\atop \xi
  =-C_2i\overline{x}}e^{-ix\cdot \xi /h}u(x,\xi )dxd\xi .
$$
Then,
\[
\begin{split}
  J(h)&=\sum_0^{N-1}\frac{1}{k!}\left(\frac{h}{i}\sum_1^n
    \frac{\partial }{\partial x_j}\frac{\partial }{\partial \xi _j}
  \right)^ku(0,0)+R_N(h)\\
&=\sum_{|\alpha |\le N-1}\frac{1}{\alpha !}\left(\frac{h}{i}
\right)^{|\alpha |} \left(\partial _x^\alpha \partial _\xi ^\alpha u \right)(0,0)+R_N(h),
\end{split}
\]
$$
|R_N(h)|\le C(n)(N+1)^nN! \left(\frac{h}{C_1^2C_2}
\right)^N\sup_{|x|\le C_1\atop |\xi |\le C_1C_2}|u(x,\xi )|.
$$
This follows from Theorem \ref{stp1}, some calculations and the following
three observations:
\begin{itemize}
\item $\Gamma :$ $\xi =(C_2/i)\overline{x}$ is a maximally totally
  real subspace of ${\bf C}^{2n}$, hence $\simeq {\bf R}^{2n}$ after a
  complex linear change of coordinates.
\item The restriction of $e^{-ix\cdot \xi /h}$ to $\Gamma $ is equal
  to $e^{-C_2|x|^2/h}$.
\item The corresponding restriction of $i^{-1}\partial
  _x\cdot \partial _\xi $ is equal to
$$
\frac{1}{i}\partial _x\cdot \frac{i}{C_2}\partial
_{\overline{x}}=\frac{1}{4C_2}\Delta _{{\rm Re}\,x,{\rm Im}\,x}.
$$
\end{itemize}

\paragraph{Non-quadratic case.} The holomorphic version of the Morse
lemma is the following:
\begin{lemma}\label{stp3}
Let $\phi \in \mathrm{Hol\,}(\mathrm{neigh\,}(z_0,{\bf C}^n))$, $\phi
'(z_0)=0$, $\det \phi ''(z_0)\ne 0$. Then there exist local
holomorphic coordinates $\widetilde{z}_1,...,\widetilde{z}_n$, centered at $z_0$ such that
$$
\phi (z)=\phi (z_0)+\frac{1}{2}(\widetilde{z}_1^2+...+\widetilde{z}_n^2).
$$
\end{lemma}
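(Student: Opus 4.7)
The plan is to follow the standard holomorphic version of the Morse lemma, reducing to a parametrized linear algebra problem. First I would translate and renormalize so that $z_0=0$ and $\phi(z_0)=0$; this costs nothing and simplifies notation. Next, I would use the fact that the complex symmetric non-degenerate bilinear form $\phi''(0)$ can be diagonalized by a complex linear change of variables to the standard form $\sum z_j^2$, so that after a preliminary linear change I may assume $\phi''(0)=I$ (the identity matrix).

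The next step is an integral form of Taylor's formula at the origin: since $\phi(0)=0$ and $\phi'(0)=0$,
\[
\phi(z) = \int_0^1 (1-t)\, \phi''(tz)\, dt\,(z,z) = \tfrac{1}{2}\, z^{T} Q(z)\, z,
\]
where
\[
Q(z) = 2\int_0^1 (1-t)\, \phi''(tz)\, dt
\]
is a holomorphic symmetric matrix-valued function with $Q(0) = \phi''(0) = I$.

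Now I would look for a holomorphic matrix-valued function $A(z)$ of complex symmetric matrices, with $A(0)=I$, such that $A(z)^{T} A(z) = Q(z)$. Since $Q(0)=I$, write $Q(z) = I + R(z)$ with $R(0)=0$ and define $A(z) = (I+R(z))^{1/2}$ using the convergent binomial series $(I+X)^{1/2} = \sum_{k\ge 0} \binom{1/2}{k} X^k$, valid in operator norm whenever $\|X\|<1$. This series produces a holomorphic matrix-valued function near $z=0$, and because $R(z)$ is symmetric, so is every power $R(z)^k$, hence $A(z)$ is symmetric. Therefore $A^{T}A = A^2 = Q$ holds on a neighborhood of the origin. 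Setting
\[
\widetilde{z} = A(z)\, z,
\]
the differential at the origin is $A(0) = I$, so by the holomorphic inverse function theorem $z \mapsto \widetilde{z}$ is a local biholomorphism. A direct computation gives $\tfrac{1}{2}\widetilde{z}\cdot\widetilde{z} = \tfrac{1}{2}\, z^{T} A(z)^{T} A(z)\, z = \tfrac{1}{2}\, z^{T} Q(z)\, z = \phi(z)$, which is the desired normal form.

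The main obstacle (really the only non-routine point) is the holomorphic construction of the square root $A(z)=Q(z)^{1/2}$; everything else is linear algebra and the holomorphic inverse function theorem. The binomial series argument handles this cleanly because $Q(0)=I$, but one should note that a global holomorphic square root on symmetric invertible matrices does not exist, so the reduction to $\phi''(0)=I$ at the start is essential to keep us in a neighborhood of $I$ where the principal branch is single-valued and holomorphic.
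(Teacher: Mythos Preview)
Your proof is correct. The paper does not give a detailed argument; it simply observes that the standard real Morse lemma proof carries over once one replaces the $C^\infty$ implicit function theorem by its holomorphic counterpart. Your write-up is one of the standard implementations of exactly this idea: the Taylor integral formula gives $\phi(z)=\tfrac12 z^T Q(z)z$ with $Q$ holomorphic and $Q(0)=I$, the binomial series produces a holomorphic symmetric square root $A(z)=Q(z)^{1/2}$, and the holomorphic inverse function theorem shows $\widetilde z = A(z)z$ is a local biholomorphism. The only minor stylistic difference is that the classical Milnor-style proof (implicitly referenced by the paper) proceeds by induction, peeling off one square at a time, whereas you do it in one shot via the matrix square root; your version is shorter and avoids the induction, at the small cost of the preliminary linear reduction to $\phi''(0)=I$ so that $Q(z)$ stays near the identity where the principal square root is single-valued.
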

The main ingredient in the standard proof of the Morse lemma in the
real smooth category is the implicit function theorem in the same
category. To get the proof of the holomorphic Morse lemma it suffices
to use the holomorphic implicit function theorem.
\begin{theo}
\label{stp4}
Let $0\in V\Subset U\subset {\bf C}^n$, $V,\,U$ open, $\phi \in
\mathrm{Hol\,}(U)$, $\phi (0)=0$, $\phi '(0)=0$, $\phi ''(0)$
non degenerate. Assume that ${\rm Re}\,\phi \ge 0$ on $V_{\bf R}:=V\cap {\bf
  R}^n$, ${\rm \Re}\,\phi >0$ on $\partial V_{\bf R}$, $\phi '(x)\ne 0$ on
$V_{\bf R}\setminus \{0 \}$. Then, for every $C>0$ large enough, there exists
a constant $\varepsilon >0$ such that for every $u\in \mathrm{Hol\,}(U)$,
$$
\int_{V_{\bf R}}e^{-\phi (x)/h}u(x)dx=\sum_{0\le k\le 1/(Ch)}
(2\pi h)^{\frac{n}{2}}\frac{h^k}{k!}\left(\frac{1}{2}\widetilde{\Delta
  }\right)^k\left(\frac{u}{J} \right)(0)+R(\lambda ),
$$
where
$$
|R(h)|\le \frac{1}{\varepsilon }e^{-\frac{\varepsilon }{h}}\sup_U |u(z)|,\
0<h\le 1.
$$
Here, $\widetilde{\Delta }$ denotes the Laplacian in the Morse
coordinates, $J=\det \frac{d\widetilde{z}}{dz}$, $J(0)=(\det \phi
''(0))^{\frac{1}{2}}$, with the choice of the branch of the square
root that tends to 1, when we deform $\phi ''(0)$ to $1$ in the space
of invertible symmetric matrices with real part $\ge 0$.
\end{theo}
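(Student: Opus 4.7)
The plan is to reduce to a quadratic phase via the holomorphic Morse Lemma (Lemma 3.3), then apply the quadratic stationary-phase result (Theorem 3.1) with the optimal truncation $N\sim 1/(Ch)$. The factorial growth of the remainder in Theorem 3.1, combined with this choice of $N$, is what converts the polynomial remainder there into an exponentially small $\mathcal{O}(e^{-\varepsilon/h})$. By Lemma 3.3, I choose holomorphic coordinates $\widetilde z(z)$ near $0$ such that $\phi(z) = \widetilde z^2/2$, and set $J(z) = \det(\partial\widetilde z/\partial z)$ with the stated branch. Pick $r>0$ small enough that the Morse chart covers a complex neighborhood of $\{|\widetilde z|\le 2r\}$ and split
$$\int_{V_{\mathbf{R}}} e^{-\phi/h}u\,dx = I_{\mathrm{int}} + I_{\mathrm{ext}},$$
integrating respectively over $\Omega_{\mathrm{int}}=\{x\in V_\mathbf{R}:|\widetilde z(x)|\le r\}$ and its complement in $V_\mathbf{R}$.

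For $I_{\mathrm{int}}$ I change variables to $\widetilde z$: $\Omega_{\mathrm{int}}$ maps to a totally real $n$-chain $\widetilde\Omega$ through $0$ on which $\Re\widetilde z^2 = 2\Re\phi\ge 0$. A Stokes deformation inside the half-space $\{\Re\widetilde z^2\ge 0\}$ moves $\widetilde\Omega$ to the flat ball $\{\widetilde z\in\mathbf{R}^n, |\widetilde z|\le r\}$, producing a lateral surface contribution on $\{|\widetilde z|=r\}$ where $\Re\widetilde z^2\gtrsim r^2$ gives an exponentially small integrand; this lateral piece will later cancel against the matching piece from the $I_{\mathrm{ext}}$ deformation. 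On the flat ball, Theorem 3.1 (rescaled by $r$) produces the asymptotic sum in the statement, truncated at $N$, with remainder bounded by $Ch^{n/2+N}(N+1)^{n/2}N!\,2^N\sup|u/J|$.

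For $I_{\mathrm{ext}}$ the integration sits in a compact set $K\subset V_\mathbf{R}\setminus\{0\}$, where $\phi'\ne 0$. I deform $K$ in $\mathbf{C}^n$ using the vector $y(x) = -t\,\Im\phi'(x)$: from the Taylor expansion $\Re\phi(x+iy) = \Re\phi(x) - y\cdot\Im\phi'(x) + \mathcal{O}(|y|^2)$ one finds $\Re\phi(x - it\,\Im\phi'(x)) = \Re\phi(x) + t|\Im\phi'(x)|^2 + \mathcal{O}(t^2)$. At points where $\Im\phi'(x)$ vanishes, $\phi'(x)$ is purely real and nonzero, equal to $\nabla(\Re\phi|_{V_\mathbf{R}})(x)$; the hypothesis $\Re\phi\ge 0$ together with this nonvanishing gradient forces $\Re\phi(x)>0$ (else $x$ would be an interior minimum with vanishing gradient). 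A partition of unity glues the local deformations into a single complex $n$-chain whose boundary matches $\partial V_\mathbf{R}$ (where $\Re\phi\ge\delta>0$ by hypothesis) and the shared $\{|\widetilde z|=r\}$ boundary with $I_{\mathrm{int}}$ (so that those lateral terms cancel); along this chain $\Re\phi\ge\varepsilon_0>0$, whence $I_{\mathrm{ext}} = \mathcal{O}(e^{-\varepsilon_0/h})\sup_U|u|$.

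To finish, take $N = \lfloor 1/(Ch)\rfloor$ with $C$ large. By Stirling, $h^N\,2^N\,N!\lesssim (2Nh/e)^N\sqrt{N} = (2/(eC))^N\sqrt{N}$, which is $\mathcal{O}(e^{-\varepsilon/h})$ as soon as $C>2/e$ and easily absorbs the polynomial factor $(N+1)^{n/2}h^{n/2}$. Combined with the exterior bound this gives the claimed $\mathcal{O}(e^{-\varepsilon/h})$ remainder. The main obstacle is the contour engineering: the Morse Stokes deformation (inner piece) and the $\Im\phi'$-based deformation (outer piece) must be joined along their common lateral boundary $\{|\widetilde z|=r\}$ so that their Stokes surface terms cancel, leaving only the $\partial V_\mathbf{R}$ contribution, which is exponentially small. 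The three hypotheses on $\phi$ — $\phi'\ne 0$ off $0$, $\Re\phi\ge 0$ on $V_\mathbf{R}$, and $\Re\phi > 0$ on $\partial V_\mathbf{R}$ — are exactly what make this global gluing possible, and carrying it out cleanly is the technical heart of the proof.
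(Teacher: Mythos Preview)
Your overall strategy is sound and lands on the same endgame as the paper: reduce to Morse coordinates near $0$, deform to the real ball, apply Theorem~\ref{stp1}, and truncate at $N\sim 1/(Ch)$ so that Stirling converts the factorial remainder into $e^{-\varepsilon/h}$. The paper, however, organizes the contour deformation differently and more economically.

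Instead of splitting into interior/exterior pieces with separate deformations and a lateral gluing, the paper introduces a single cutoff $\chi\in C_0^\infty(V_{\bf R})$ equal to $1$ away from $\partial V_{\bf R}$ (the boundary contribution is already exponentially small since $\Re\phi>0$ there) and then performs one global deformation $\Gamma_\delta:\,x\mapsto x+\delta\,\overline{\phi'(x)}$. Along $\Gamma_\delta$ one has $\Re\phi(z)\ge \Re\phi(x)+\delta|\phi'(x)|^2-{\cal O}(\delta^2|\phi'(x)|^2)\ge (\delta/C)|z|^2$, so the gain is $|\phi'|^2$ rather than your $|\Im\phi'|^2$. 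This is the crucial simplification: it is strictly positive everywhere on $V_{\bf R}\setminus\{0\}$ and gives the right quadratic behavior near $0$, so no separate argument is needed at points where $\Im\phi'=0$, and there is no interior/exterior boundary to match. Stokes' formula produces only an error supported where $\overline\partial\chi\ne 0$, i.e.\ near $\partial V_{\bf R}$, which is again exponentially small. After that, near $0$ one passes to Morse coordinates exactly as you do; the deformed contour is a graph $\widetilde y=k(\widetilde x)$ with $|k'|<1$, which is then flattened to $\widetilde y=0$ and Theorem~\ref{stp1} applies.

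Your route works, but the choice of deformation direction $\overline{\phi'}$ in place of $-i\,\Im\phi'$ is what lets the paper avoid both the partition-of-unity patching and the ``technical heart'' you identify. It is worth internalizing: when pushing a real contour into the complex to gain decay for $e^{-\phi/h}$ with $\phi$ holomorphic, the direction $\overline{\phi'}$ is the steepest-descent choice and yields the full $|\phi'|^2$.
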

\begin{proof}
Up to an exponentially small modification, we may replace the integral
by
\[\begin{split}
&I_\chi =\int_{{\bf R}^n}e^{-\phi (x)/h}u(x)\chi (x)dx,\ \chi \in
C_0^\infty (V_{\bf R}),\\ &\mathrm{supp\,}(1-\chi )\subset \hbox{ small
  neighborhood of }\partial V_{\bf R}.
\end{split}\]
Make a first contour deformation
$\Gamma _\delta :\, V_{\bf R}\ni x\mapsto x+\delta \overline{\phi
  '}(x)$, $0\le \delta \le \delta _0\ll 1$. Along $\Gamma _\delta $ we
have
$$
\phi (z)=\phi (x)+\delta |\phi '(x)|^2+{\cal O}(\delta ^2|\phi
'(x)|^2)\ge \frac{\delta }{C}|z|^2,
$$
when $\delta _0$ is small enough.

\medskip
\noindent
Let $G$ be the $(n+1)$-dimensional contour formed by the union of the $\Gamma _\delta $ for $0\le \delta
\le \delta _0$. Then Stokes' formula gives (with $\chi $ denoting also a suitable smooth extension to the complex domain),
$$
I_\chi =\int_{\Gamma _{\delta _0}}e^{-\phi (z)/h}u(z)\chi (z)dz-\int_G
d(e^{-\phi /h}u(z)\chi (z)dz).
$$
The last integral is equal to
$$\int_{G\cap \mathrm{neigh\,}(\partial V_{\bf R})} e^{-\phi (z)/h}u(z){\overline{\partial }\chi
  (z)\wedge dz}.$$
When estimating the integral over $\Gamma _{\delta _0}$, we can
restrict the attention to a small neighborhood of $0$ and then use
Morse coordinates for which $\phi =\frac{1}{2}\widetilde{z}^2$. Since
$\Re \phi \asymp |\widetilde{z}|^2$ along $\Gamma _{\delta _0}$, we
see that $\Gamma _{\delta _0}$ must be of the form
$\widetilde{y}=k(\widetilde{x})$
($\widetilde{z}=\widetilde{x}+i\widetilde{y}$), where $|k'|\le \theta
<1$, $k(0)=0$. (Use the implicit function theorem, to see that the
projection $\Gamma _{\delta _0}\ni \widetilde{z}\mapsto \widetilde{x}$
is a diffeomorphism near $0$.) The last step is then to deform the
contour $\widetilde{y}=k(\widetilde{x})$ to $\widetilde{y}=0$ in the
simplest possible way and to apply Theorem \ref{stp1}.
\end{proof}

\section{Contour integrals and Fourier transforms}
\setcounter{equation}{0}
\paragraph{a. Remarks about real quadratic forms on ${\bf C}^n$.} Let $q$ be a real quadratic form on ${\bf C}^n\simeq {\bf
  R}^{2n}$. Let $\mathrm{sign\,}(q)=(m_+,m_-)$ where $m_\pm =m_\pm (q)$ are given by
$$
q=\sum_1^{m_+}\xi _j^2-\sum_{m_++1}^{m_++m_-}\xi _j^2,
$$
for suitable real-linear coordinates on ${\bf C}^n$. We know
that $m_+$ ($m_-$) is the largest possible dimension of a real-linear subspace
on which $q$ is positive (negative) definite.

\medskip
\noindent
Using the complex structure, put $Jq(x)=q(ix)$, so that $J^2q=q$
(since $q$ is even).

\medskip
\noindent
Notice that $q$ is pluriharmonic iff $Jq=-q$.

\medskip
\noindent
We say that $q$ is Levi if $Jq=q$.

\medskip
\noindent
In general we have the decomposition $$q=h+\ell =2\Re (\sum a_{j,k}z_jz_k)+\sum b_{j,k}\overline{z}_jz_k,$$ where
$h=(1-J)q/2$ is pluri-harmonic and $\ell =(1+J)q/2$ is Levi.
\begin{prop}\label{cif1}
Let $q$ be a pluri-subharmonic quadratic form on ${\bf C}^n$. Then
\begin{itemize}
\item[(a)] $m_+(q)\ge m_-(q)$
\item[(b)] If $q$ is non-degenerate of signature $(n,n)$, then the same
  fact holds for every pluri-subharmonic quadratic form $\widetilde{q}\le q$.
\end{itemize}
\end{prop}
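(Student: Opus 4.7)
\vskip 2mm \noindent
\textbf{Proof proposal.} The plan is to decompose $q = h + \ell$ as in the discussion preceding the statement, where $h$ is pluri-harmonic and $\ell$ is Levi (Hermitian). The pluri-subharmonicity of $q$ is equivalent to $\partial_{\bar{z}}\partial_z q \geq 0$; since $h$ is pluri-harmonic, $\partial_{\bar z}\partial_z q = \partial_{\bar z}\partial_z \ell$ is the matrix $(b_{jk})$ representing $\ell$, so pluri-subharmonicity of $q$ is exactly $\ell \geq 0$ pointwise on $\comp^n$.

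For part (a), the idea is to use the complex structure $J: z \mapsto iz$ to swap positivity and negativity. A direct calculation gives the two identities $h(iz) = -h(z)$ (since $h$ is pluri-harmonic) and $\ell(iz) = \ell(z)$ (since $\ell$ is Hermitian). Let $V \subseteq \comp^n$ be a real subspace of maximal dimension $m_-(q)$ on which $q$ is negative definite. Then for any $v\in V\setminus\{0\}$,
\begin{equation*}
q(iv) = -h(v) + \ell(v) = -q(v) + 2\ell(v) \geq -q(v) > 0,
\end{equation*}
using $\ell \geq 0$ at the crucial step. Hence $q$ is positive definite on $iV$, a subspace of real dimension $\dim_{\mathbf{R}} V = m_-(q)$, giving $m_+(q) \geq m_-(q)$.

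For part (b), I would combine part (a) with the monotonicity of signature under the order $\widetilde q \leq q$. Since $q - \widetilde q \geq 0$, every real subspace on which $\widetilde q$ is positive definite is one on which $q$ is positive definite, so $m_+(\widetilde q) \leq m_+(q) = n$. Dually, every subspace on which $q$ is negative definite is one on which $\widetilde q$ is negative definite, so $m_-(\widetilde q) \geq m_-(q) = n$. Applying part (a) to $\widetilde q$ yields $m_+(\widetilde q) \geq m_-(\widetilde q) \geq n$, which forces $m_+(\widetilde q) = m_-(\widetilde q) = n$ and in particular $m_+(\widetilde q) + m_-(\widetilde q) = 2n$, so $\widetilde q$ is non-degenerate of signature $(n,n)$.

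The main step, and the only one that really uses pluri-subharmonicity in an essential way, is the single identity $q(iv) = -q(v) + 2\ell(v)$ together with $\ell \geq 0$; everything else is a comparison argument at the level of quadratic forms. I do not anticipate a serious obstacle, only the bookkeeping of writing out the decomposition $q = h+\ell$ and checking the symmetry $h \circ J = -h$, $\ell \circ J = \ell$.
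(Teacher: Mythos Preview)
Your proposal is correct and follows essentially the same route as the paper: decompose $q=h+\ell$, use $\ell\ge 0$ and the identities $h\circ J=-h$, $\ell\circ J=\ell$ to show $q$ is positive definite on $iV$ when it is negative definite on $V$, and for (b) use $\widetilde q\le q$ to get $m_-(\widetilde q)\ge n$ and then apply (a). The only cosmetic difference is that the paper argues via $h(x)<0\Rightarrow h(ix)>0\Rightarrow q(ix)>0$, while you package the same computation as $q(iv)=-q(v)+2\ell(v)$; your extra inequality $m_+(\widetilde q)\le n$ in (b) is true but not needed once $m_+(\widetilde q)\ge m_-(\widetilde q)\ge n$ and $m_++m_-\le 2n$.
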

\begin{proof} The pluri-subharmonicity of $q$ means that $\ell \ge 0$.
\medskip
\noindent
(a) Let $L\subset {\bf C}^n$ be a real-linear subspace of
dimension $m_-=m_-(q)$ such that ${{q}_\vert}_{L}<0$. Use the
decomposition $q=h+\ell$. Then $h(x)=q(x)-\ell (x)<0$ for $0\ne x\in
L$. Consequently, $h(ix)>0$, so $q(ix)=h(ix)+\ell (ix)>0$. Thus $q$ is
positive definite on the $m_-$-dimensional space $iL$, so $m_+\ge m_-$.
\medskip
\noindent
(b) Now assume that $m_+=m_-=n$. Let $\widetilde{q}\le q$ be pluri-subharmonic
and choose the subspace $L$ as in (a). Then $\widetilde{q}$ is
negative definite on $L$ so $m_-(\widetilde{q})\ge m_-(q)=n$ and from
the part (a) of the proposition we conclude that $\widetilde{q}$ has
signature $(n,n)$.\end{proof}
\paragraph{b. Fundamental lemma.}
\begin{lemma}\label{cif2}
Let $\phi \in C^\infty (\mathrm{neigh\,}((0,0),{\bf C}^{n+k});{\bf
  R})$ be pluri-subharmonic. Assume that $\nabla _y\phi (0,0)=0$ and that
$\nabla _y^2\phi(0,0)$ is non\-dege\-ne\-rate of signature $(k,k)$. For $x\in
\mathrm{neigh\,}(0,{\bf C}^n)$, let $y(x)\in \mathrm{neigh\,}(0,{\bf
  C}^k)$ be the unique critical point of $\phi (x,\cdot )$, so that
$y(x)$ is a smooth function of $x$ by the implicit function
theorem. Then the critical value of $y\mapsto \phi (x,y)$,
$$\Phi (x)=\phi (x,y(x))=\mathrm{vc}_y\phi (x,y)$$ is
pluri-subharmonic. If $\widetilde{\phi }\le \phi $ is pluri-subharmonic with $\widetilde{\phi
}(0,0)=\phi (0,0)$, then $\nabla _y^2\widetilde{\phi }(0,0)$ is also
non-degenerate of signature $(k,k)$ and
$$
\mathrm{vc}_{y}\widetilde{\phi } (x,y)\le \mathrm{vc}_{y}\phi  (x,y),
\hbox{ for }x\in \mathrm{neigh\,}(0,{\bf C}^n).
$$
\end{lemma}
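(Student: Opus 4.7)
I plan to treat the two assertions separately. For the second, the inequality $\widetilde\phi-\phi\le 0$ with equality at $(0,0)$ says that $(0,0)$ is a local maximum of $\widetilde\phi-\phi$, so its gradient vanishes and its real Hessian is $\le 0$ there; restricting to the $y$-block yields $\nabla_y\widetilde\phi(0,0)=0$ and $\nabla_y^2\widetilde\phi(0,0)\le\nabla_y^2\phi(0,0)$. The associated quadratic forms on ${\bf C}^k$ are both pluri-subharmonic, since the restriction to $\{x=0\}$ of a pluri-subharmonic function is pluri-subharmonic and the $2$-jet at a critical point of a pluri-subharmonic function is itself a pluri-subharmonic quadratic form. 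By Proposition~\ref{cif1}(b), domination by a form of signature $(k,k)$ preserves the signature; hence $\nabla_y^2\widetilde\phi(0,0)$ is non-degenerate of signature $(k,k)$, and the implicit function theorem produces a critical point $\widetilde y(x)$ of $\widetilde\phi(x,\cdot)$.

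For the inequality $\widetilde\Phi\le\Phi$, I would linearly interpolate $\phi_t:=(1-t)\phi+t\widetilde\phi$ for $t\in[0,1]$. Each $\phi_t$ is pluri-subharmonic with vanishing $y$-gradient at $(0,0)$, and $\nabla_y^2\phi_t(0,0)\le\nabla_y^2\phi(0,0)$ combined with Proposition~\ref{cif1}(b) again gives signature $(k,k)$; the implicit function theorem then yields a jointly smooth family $(t,x)\mapsto y_t(x)$ on a common neighborhood. The envelope theorem gives
\[
\partial_t\bigl[\phi_t(x,y_t(x))\bigr]=(\partial_t\phi_t)(x,y_t(x))=(\widetilde\phi-\phi)(x,y_t(x))\le 0,
\]
so integrating from $t=0$ to $t=1$ yields $\widetilde\Phi(x)\le\Phi(x)$.

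For the first assertion, pluri-subharmonicity of $\Phi$ is a pointwise condition on its $2$-jet, and differentiating $\partial_{x_j}\Phi=(\partial_{x_j}\phi)(x,y(x))$ together with implicit differentiation of $\nabla_Y\phi(x,y(x))=0$ (with $Y=(\Re y,\Im y)\in{\bf R}^{2k}$) yields a Schur-complement formula expressing the Levi form of $\Phi$ at any $x_0$ in terms of the $2$-jet of $\phi$ at $(x_0,y(x_0))$. After translation this reduces the problem to showing that, for a pluri-subharmonic quadratic form $\phi$ on ${\bf C}^{n+k}$ satisfying the hypotheses at $(0,0)$, the Hermitian matrix
\[
L_{xx}-P\,D^{-1}P^{*}
\]
is positive semi-definite, where $P_{j\alpha}=\partial_{Y_\alpha}\partial_{x_j}\phi$, $L_{xx}=(\partial_{\bar x_i}\partial_{x_j}\phi)$, and $D=\nabla_Y^2\phi(0,0)$. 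I would verify this by decomposing $\phi=h+\ell$ into pluri-harmonic and Hermitian parts (so that $D=D_h+D_\ell$ with $D_\ell\ge 0$ and $D_h$ non-degenerate of signature $(k,k)$) and combining the Hermitian positivity of the full Levi form of $\phi$ with the signature condition on $D$ to rewrite the Schur complement as a sum of a Cauchy--Schwarz-type non-negative Hermitian matrix and a completion-of-squares term. This algebraic positivity check is the main obstacle; in the scalar case $n=k=1$, writing $\phi=h_{11}|x|^2+2\Re(h_{12}x\bar y)+h_{22}|y|^2+\Re(p_{11}x^2+2p_{12}xy+p_{22}y^2)$ and solving the $2\times 2$ linear system for $y(x)$, the Schur complement rearranges using $h_{11}h_{22}\ge|h_{12}|^2$ (from $L_\phi\ge 0$) and $|p_{22}|>h_{22}$ (from the signature) into a manifest completion of squares of the form $|h_{12}p_{22}/\sqrt{h_{22}}-\sqrt{h_{22}}\,p_{12}|^2/(|p_{22}|^2-h_{22}^2)$ plus a non-negative remainder; the general block case follows the same pattern with matrix coefficients.
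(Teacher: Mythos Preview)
Your treatment of the second assertion is correct and genuinely different from the paper's. Where the paper uses a mini--max characterization
\[
\mathrm{vc}_y\phi(x,y)=\inf_{t\in iL}\ \sup_{y\in t+L}\phi(x,y)
\]
(for a $k$-dimensional real subspace $L\subset{\bf C}^k$ on which $\nabla_y^2\phi(0,0)<0$) and deduces the inequality $\mathrm{vc}_y\widetilde\phi\le\mathrm{vc}_y\phi$ directly from $\widetilde\phi\le\phi$, you obtain the same inequality via the linear homotopy $\phi_t=(1-t)\phi+t\widetilde\phi$ and the envelope identity $\partial_t\bigl[\phi_t(x,y_t(x))\bigr]=(\widetilde\phi-\phi)(x,y_t(x))\le 0$. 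Both routes use Proposition~\ref{cif1}(b) in the same way to secure the signature $(k,k)$ for $\nabla_y^2\widetilde\phi(0,0)$. The mini--max argument is more geometric and avoids tracking a one-parameter family of critical points; your envelope argument is perfectly valid and arguably more systematic.

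The gap is in the first assertion. You reduce plurisubharmonicity of $\Phi$ to positivity of a Schur-type expression $L_{xx}-PD^{-1}P^{*}$, verify this by hand when $n=k=1$, and then assert that ``the general block case follows the same pattern''. This last step is not carried out, and the mixture of the complex Levi block $L_{xx}$, the real Hessian $D$ in the $Y$-variables, and the mixed block $P=\partial_Y\partial_x\phi$ makes the matrix algebra genuinely delicate; it is not clear that the $1\times 1$ completion of squares extends without further ideas. More to the point, you have already done the work that makes this unnecessary. The paper's proof of plurisubharmonicity is a one-line application of the comparison you have just established: pass to the quadratic Taylor polynomial $\phi^{(2)}$ of $\phi$ at $(0,0)$ and take $\widetilde\phi^{(2)}$ to be its pluriharmonic part (so $\widetilde\phi^{(2)}\le\phi^{(2)}$ with equality at $0$, and $\nabla_y^2\widetilde\phi^{(2)}(0,0)$ is non-degenerate of signature $(k,k)$). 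Your comparison gives $\mathrm{vc}_y\widetilde\phi^{(2)}\le\mathrm{vc}_y\phi^{(2)}$. The left-hand side is pluriharmonic (the critical point of a pluriharmonic quadratic form is holomorphic in $x$, so the critical value is the real part of a holomorphic quadratic form), and the right-hand side is the quadratic Taylor polynomial of $\Phi$ at $0$; a quadratic form dominating a pluriharmonic one with equality at the origin has non-negative Levi form there. Repeat at every point. This replaces your Schur-complement computation entirely.
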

\begin{proof}
Let $L\subset {\bf C}^k$ be a subspace of real dimension $k$ such that
${{\nabla _y^2\phi (0,0)}_\vert}_{L}<0$. Then ${{\nabla _y^2\phi
    (0,0)}_\vert}_{iL}>0$. For $t\in \mathrm{neigh\,}(0,iL)$, put
$L_t=t+L$, so that the $\Gamma _t$ form a foliation of a neighborhood
of $0\in {\bf C}^k$. Then, it is well known that
$$
\phi (x,y(x))=\inf_t\sup_{y\in \Gamma _t}\phi (x,y),\ x\in
\mathrm{neigh\,}(0,{\bf C}^n).
$$
If $\widetilde{\phi }\le \phi $ is as in the statement of the lemma,
we have ${{\nabla _y^2\widetilde{\phi }(0,0)}_\vert}_{L}<0$, so
$\nabla _y^2\widetilde{\phi }(0,0)$ is non-degenerate of signature
$0$. Then $y\mapsto \widetilde{\phi }(x,y)$ has a non-degenerate
critical point $\widetilde{y}(x)$ and we have the same mini-max
formula as for $\phi $:
$$
\widetilde{\phi } (x,y(x))=\inf_t\sup_{y\in \Gamma _t}\widetilde{\phi } (x,y),\ x\in
\mathrm{neigh\,}(0,{\bf C}^n).
$$
It is then clear that $\widetilde{\phi }(x,\widetilde{y}(x))\le \phi
(x,y(x)).$
\medskip
\noindent
Replacing $\phi $, $\widetilde{\phi }$ by their quadratic Taylor polynomial $\phi
^{(2)}(x,y)$, $\widetilde{\phi }^{(2)}(x,y)$ at $(0,0)$, and the critical points by their linear
Taylor polynomials $y^{(1)}(x)$ and $\widetilde{y}^{(1)}(x)$, we see
that $\phi ^{(2)}(x,y^{(1)}(x))$, $\widetilde{\phi }
^{(2)}(x,\widetilde{y}^{(1)}(x))$ are the quadratic Taylor polynomials
of $\phi (x,y(x))$, $\widetilde{\phi }(x,\widetilde{y}(x))$. Taking
$\widetilde{\phi }^{(2)}$ pluri-harmonic it is clear that $\widetilde{\phi
}^{(2)}(x,\widetilde{y}^{(1)}(x))$ is pluri-harmonic and $\le \phi ^{(2)}(x,y
^{(1)}(x))$, so the latter is pluri-subharmonic. This shows that $\mathrm{vc}_y\phi
(x,y)$ has a positive semi-definite Levi form at $0$. The same argument
now works with $0$ replaced by any other point in
$\mathrm{neigh\,}(0,{\bf C}^n)$ and we get the desired plurisubharmonicity.
\end{proof}
\paragraph{c. Contour integration.} Let $\phi (y)\in C^\infty (\mathrm{neigh\,}(0,{\bf C}^k);{\bf R})$. Assume that $0$ is
a ``col'' for $\phi $ in the sense that $\nabla _y\phi (0)=0$ and $\nabla _y^2\phi (0)$ is non-degenerate of signature $(k,k)$. Consider
a smooth contour $\Gamma :\mathrm{neigh\,}(0,{\bf R}^k)\to \mathrm{neigh\,}(0,{\bf C}^k)$ with $\Gamma (0)=0$, $d\Gamma $
injective. We say that $\Gamma $ is a good contour if
$$
\phi (y)-\phi (0)\le -\frac{1}{C}|y|^2,\ y\in \Gamma .
$$

\medskip
\noindent
If $u\in H_{\phi ,0} $ i.e. an element of $H_\phi(\mathrm{neigh\,}(0,{\bf C}^k))$, then
$$
I_\Gamma (h)=e^{-\phi (0)/h}\int_{\Gamma }u(y;h)dy
$$
is well-defined up to an exponentially small ambiguity (and also up to
a factor $\pm$ depending on a choice of orientation, that we shall
simply forget). As we have seen, a second good contour passing through
$0$ can be deformed to $\Gamma $ within the set of such good contours.

\medskip
\noindent
Now take $\phi (x,y)\in C^\infty (\mathrm{neigh\,}((0,0),{\bf C}^{n+k});{\bf R})$ with $\phi (0,y)$ as above. If $\Gamma $ is a
good contour for the latter function and $u\in H_{\phi, (0,0) }$, then
by deforming $\Gamma $ into an $x$-dependent good contour for $\phi(x,\cdot )$, we see that
$$
U(x;h)=\int _\Gamma u(x,y;h)dy
$$
is a well defined element of $H_{\Phi ,0}$, where $\Phi(x)=\mathrm{vc}_y\phi (x,y)$.

\medskip
\noindent
When working with differential forms of other degrees, we may be
interested in other signatures than $(k,k)$. Also, for instance when
composing Fourier integral operators, one is frequently in the situation of integrating
along a good contour with respect to one group of variables and then
for the resulting integral we want a good contour for the last group
of variables. The following discussion (that we state only for quadratic
forms) shows that this will always work as well as one can possibly
hope for.

\medskip
\noindent
This has nothing to do with the complex structure, so we consider a
decomposition $x=(x',x'')\in {\bf R}^n$, $x'\in {\bf R}^{n-d}$,
$x''\in {\bf R}^d$. Let $q$ be a quadratic form on ${\bf R}^n$ such
that $q''(x''):=q(0,x'')$ is a non-degenerate quadratic form on ${\bf R}^d$. Then
$
x''\mapsto q(x',x'')
$
has a unique critical point $x''=x''(x')$ depending linearly on
$x'$. Consequently, the corresponding critical value
$q'(x')=q(x',x''(x'))$ is a quadratic form on ${\bf R}^{n-d}$. Let
$(m_+(q),m_-(q))$ be the signature of $q$ and denote the signatures of
$q'$ and $q''$ similarly. Then by assumption, $m_+(q'')+m_-(q'')=d$.
\begin{prop}\label{cif3}
Under the above assumptions we have
\begin{equation}\label{cif.1}
m_+(q)=m_+(q')+m_+(q''),\ m_-(q)=m_-(q')+m_-(q'').
\end{equation}
If $L_-'$, $L_-''$ are subspaces of ${\bf R}^n$ of dimension $m_-(q')$
and $m_-(q'')$ respectively such
that
${{q'}_\vert}_{L_-'}$,
${{q''}_\vert}_{L_-''}$ are negative definite, and we put
$L_-=\{ (x',x''(x')+x'';\, x'\in L_-',\, x''\in L_-'' \}$, then
${{q}_\vert}_{L_-}$ is negative definite.
\end{prop}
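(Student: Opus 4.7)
The plan is to exhibit an explicit orthogonal (with respect to $q$) decomposition $\mathbf{R}^n = V' \oplus V''$ in which $q$ decouples as $q' \oplus q''$. Once this is established, both (\ref{cif.1}) and the claim about $L_-$ follow immediately.

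First, I would write out the critical point. Polarizing, set
$$
q(x',x'') = A x'\cdot x' + 2 B x' \cdot x'' + C x''\cdot x'',
$$
with $A$, $C$ symmetric. The assumption that $q''(x'')=q(0,x'')$ is nondegenerate on $\mathbf{R}^d$ means exactly that $C$ is invertible. The equation $\partial_{x''} q(x',x'') = 0$ gives $x''(x') = -C^{-1}B^t x'$, which is linear in $x'$ as claimed, and a direct computation then yields $q'(x') = (A - BC^{-1}B^t)x'\cdot x'$.

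The key step is the identity
$$
q(x', x'' + x''(x')) = q'(x') + q''(x''), \qquad x'\in\mathbf{R}^{n-d},\ x''\in\mathbf{R}^d,
$$
obtained by expanding the left-hand side and observing that the cross term $2\bigl(B x' + C x''(x')\bigr)\cdot x''$ vanishes by the very definition of $x''(x')$ as a critical point. This amounts to a $q$-orthogonal decomposition $\mathbf{R}^n = V' \oplus V''$, where
$$
V' = \{(x', x''(x'));\ x'\in \mathbf{R}^{n-d}\}, \qquad V'' = \{(0,x'');\ x''\in \mathbf{R}^d\},
$$
with $q|_{V'}$ linearly equivalent to $q'$ and $q|_{V''}$ equal to $q''$.

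From this decomposition, (\ref{cif.1}) follows at once: if $L'_\pm \subset \mathbf{R}^{n-d}$ and $L''_\pm \subset \mathbf{R}^d$ are subspaces on which $q'$ respectively $q''$ are $\pm$-definite of maximal dimension, then $\{(x',x''(x'));\ x'\in L'_\pm\} \oplus \{(0,x'');\ x''\in L''_\pm\}$ is a subspace of $\mathbf{R}^n$ of dimension $m_\pm(q')+m_\pm(q'')$ on which $q$ is $\pm$-definite, giving the two inequalities $m_\pm(q) \ge m_\pm(q')+m_\pm(q'')$; adding them and using $m_+(q')+m_-(q') = n-d$ and $m_+(q'')+m_-(q'') = d$ (by nondegeneracy of $q''$ and hence of $q'$, which follows from the decomposition) forces equality. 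For the second assertion, any nonzero $(x', x''(x')+x'')\in L_-$ with $x'\in L'_-$, $x''\in L''_-$ satisfies
$$
q(x', x''(x')+x'') = q'(x') + q''(x'') < 0,
$$
since at least one of $x'$, $x''$ is nonzero and $q'|_{L'_-}$, $q''|_{L''_-}$ are negative definite; this also shows that the natural map $L'_- \times L''_- \to L_-$ is injective, so $L_-$ is a genuine subspace. The only real obstacle is to get the algebraic decomposition $q(x',x'') = q'(x')+q''(x''-x''(x'))$ cleanly; everything else is then bookkeeping.
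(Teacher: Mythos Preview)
Your approach is exactly the paper's: the change of variables $(x',x'')\mapsto(x',x''+x''(x'))$ reduces $q$ to the block form $q'(x')+q''(x'')$, after which both assertions are immediate. One small slip: you claim that nondegeneracy of $q'$ ``follows from the decomposition,'' but nothing in the hypotheses forces $q$ (and hence $q'$) to be nondegenerate---take $q(x_1,x_2)=x_2^2$ with $d=1$. This does not matter, since once $q$ is block-diagonal the identity $m_\pm(q)=m_\pm(q')+m_\pm(q'')$ is just Sylvester's law of inertia applied blockwise, with no need for your inequality-and-dimension-count detour.
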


\begin{proof} After
the change of variables $x'=\widetilde{x}'$,
$x''=x''(\widetilde{x}')+\widetilde{x}''$, we are reduced to the case
when $x''(x')\equiv 0$. This means (after dropping the tildes on the
new variables) that
$$q(x)=q'(x')+q''(x'')$$
and the conclusion follows.
\end{proof}

\paragraph{d. Application to Fourier transforms.} Let $\phi \in C^\infty (\mathrm{neigh\,}(x_0,{\bf C}^n);{\bf R})$ be pluri-subharmonic with
$\phi''(x_0)$ non-degenerate of signature $(n,n)$. Let $\xi_0=\frac{2}{i}\frac{\partial \phi }{\partial x}(x_0)$.
For $\xi \in \mathrm{neigh\,}(\xi _0,{\bf C}^n)$, we put
$$
\phi ^*(\xi )=\mathrm{vc}_x (\phi (x)+\Im (x\cdot \xi )),
$$
where the critical point $x=x(\xi )$ is given by
$$
\xi =\frac{2}{i}\partial _x\phi (x),\ x(\xi _0)=x_0.
$$
Guided by the Fourier inversion formula (that we shall study below), we look at
$$
(y,\xi )\mapsto -\Im (x\cdot \xi )+\Im (y\cdot \xi )+\phi (y)
$$
which is pluri-subharmonic with the critical point $y=x$,
$\xi =\frac{2}{i}\partial_x\phi (x)$ and the corresponding critical value $\phi (x)$. The
critical point is non-degenerate of signature $(2n,2n)$ since we have the good contour
$$
\Gamma _R(x):\ \xi =\frac{2}{i}\partial _x\phi (x)+iR\overline{(x-y)},\ |x-y|<r,
$$
parametrized by $y\in B_{{\bf C}^n}(x,r)$. Indeed by Taylor expanding, we get:
$$
-\Im ((x-y)\cdot \xi )+\phi (y)=\phi (x)-(R-{\cal O}(1))|x-y|^2,\
(y,\xi )\in \Gamma _R(x).
$$
with the ``${\cal O}(1)$'' uniform in $R$. Hence $\Gamma _R$ is a good contour for $R$ large enough and $r>0$ small enough.

\medskip
\noindent
Applying Proposition \ref{cif3}, we now see that
$$
\xi \mapsto -\Im (x\cdot \xi )+\phi ^*(\xi )
$$
has a non-degenerate critical point $\xi =\xi (x)$ of signature $(n,n)$ at $\xi (x)=\frac{2}{i}\partial _x\phi (x)$ and
$$
\phi (x)=\mathrm{vc}_\xi (-\Im (x\cdot \xi )+\phi ^*(\xi )).
$$
This is a standard inversion formula for Legendre transforms when viewing $\phi ^*$ as the Legendre transform of $\phi $.

\medskip
\noindent
Using a good contour, we can define the Fourier transform
$$
{\cal F}u(\xi ;h)=\int_{\Gamma _\xi  }\underbrace{e^{-ix\cdot \xi /h
  }u(x;h)}_{\in H_{\phi (\cdot )+{\rm Im}\,((\cdot )\cdot \xi ) }}dx\in H_{\phi^*,\xi _0^*}.
$$

\medskip
\noindent
For $v\in H_{\phi ^*,\xi _0^*}$, we put
$$
{\cal G}v(x;h)=\frac{1}{(2\pi h)^n}\int_{\Gamma _x^*}e^{ix\cdot \xi /h
}v(\xi )d\xi ,
$$
where $\Gamma _x^*$ is a good contour such that
$$
\phi ^*(\xi )-\Im (x\cdot \xi )-\phi (x)\le -\frac{1}{C}|\xi -\xi
(x)|^2,\ \xi (x)=\frac{2}{i}\partial _x\phi (x).
$$
\begin{prop}\label{cif4}
For $u\in H_{\phi ,x_0}$, we have $u={\cal G}{\cal F}u$ in $H_{\phi
  _0,x_0}$ (up to equivalence).
\end{prop}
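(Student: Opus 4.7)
The plan is to follow the strategy of Proposition 1.3.4 of Chapter 1: reduce $\mathcal{G}\mathcal{F}u$ to an integral over a single $2n$-dimensional good contour, deform that contour to the Gaussian-like $\Gamma_R(x)$, and conclude by the mean value property for holomorphic functions.

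First, I would write
\[
\mathcal{G}\mathcal{F}u(x;h)=\frac{1}{(2\pi h)^n}\int_{\Gamma_x^*}\!\!\int_{\Gamma_\xi}e^{i(x-y)\cdot\xi/h}u(y;h)\,dy\,d\xi,
\]
and justify that the iterated integral equals a single integral over a combined $2n$-dimensional contour in $\mathbb{C}^{2n}_{y,\xi}$. The point of view of paragraph (d) of this section gives the right bookkeeping here: the full phase/weight function
\[
(y,\xi)\mapsto -\mathrm{Im}\,((x-y)\cdot\xi)+\phi(y)-\phi(x)
\]
is pluri-subharmonic in a neighborhood of its critical point $y=x$, $\xi=\tfrac{2}{i}\partial_x\phi(x)$, with critical value $0$, and the Hessian is non-degenerate of signature $(2n,2n)$. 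By Proposition \ref{cif3} (applied with $x''=\xi$, $x'=y$), the composition of the two good contours $\Gamma_x^*$ and $\Gamma_\xi$ is itself a good contour for the combined phase, so Fubini plus truncation arguments put the integral in the combined form above, up to an exponentially small error in $H_{\phi_0,x_0}$.

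Next, within the class of good contours, I would deform the combined contour to the explicit contour $\Gamma_R(x)$ of paragraph (d), namely
\[
\xi=\frac{2}{i}\partial_x\phi(x)+iR\,\overline{(x-y)},\qquad y\in B_{\mathbb{C}^n}(x,r),
\]
for $R$ large and $r$ small. The Taylor calculation reproduced in paragraph (d) shows that along $\Gamma_R(x)$,
\[
-\mathrm{Im}\,((x-y)\cdot\xi)+\phi(y)-\phi(x)\le -|x-y|^2,
\]
so $\Gamma_R(x)$ is good, and the homotopy through good contours is covered by Stokes' theorem modulo $H_{\phi_0,x_0}$. On $\Gamma_R(x)$, parametrized by $y$, one has $dy\wedge d\xi=c_n R^n\,L(dy)$ for a positive constant $c_n$ once the orientation is fixed as in (\ref{eq2.10.1}), and
\[
e^{i(x-y)\cdot\xi/h}\big|_{\Gamma_R(x)}=\exp\!\left(\tfrac{2}{h}(x-y)\cdot\partial_x\phi(x)-\tfrac{R}{h}|x-y|^2\right).
\]

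Finally, the integral becomes a Gaussian convolution against the holomorphic function $y\mapsto e^{-2y\cdot\partial_x\phi(x)/h}u(y;h)$. Precisely as in the end of the proof of Proposition 1.3.4 (invoking the mean value theorem for holomorphic functions, cf.\ Lemma 7.3.11 in~\cite{HormI}), the spherically symmetric normalized Gaussian reproduces the value of the holomorphic integrand at $y=x$, giving
\[
\mathcal{G}\mathcal{F}u(x;h)=u(x;h)\quad\text{in }H_{\phi_0,x_0}.
\]
The main obstacle is the bookkeeping in the first step: one must verify that the two successively chosen good contours can be assembled into a single good contour for the combined phase, and that all deformations stay within the class of good contours so as to produce only exponentially small errors. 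This is exactly what Proposition \ref{cif3} on the additivity of signatures guarantees, so once that is in hand the rest of the argument is a controlled transcription of the Chapter 1 Fourier-inversion proof.
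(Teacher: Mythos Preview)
Your proposal is correct and follows essentially the same route as the paper: write $\mathcal{G}\mathcal{F}u$ as an iterated integral, observe that the composed contour is a good contour for $(y,\xi)\mapsto -\Im((x-y)\cdot\xi)+\phi(y)$, deform to $\Gamma_R(x)$, and conclude by the spherical mean-value property. The only stylistic difference is that where you invoke Proposition~\ref{cif3} abstractly to justify that the composed contour is good, the paper simply adds the two defining inequalities for $\Gamma_x^*$ and $\Gamma_\xi$ directly to obtain $-\Im((x-y)\cdot\xi)+\phi(y)\le \phi(x)-\tfrac{1}{C}(|\xi-\xi(x)|^2+|y-x(\xi)|^2)$, which is the same conclusion reached more concretely.
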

\begin{proof} We have
$$
{\cal G}{\cal F}u(x)=\frac{1}{(2\pi h)^n}\int_{\Gamma
  _x^*}\int_{\Gamma _\xi }e^{i(x-y)\cdot \xi /h}u(y)dyd\xi \hbox{
  (iterated integral).}
$$
Along the composed contour we have (cf Proposition \ref{cif3})
\[\begin{split}
-\Im (x\cdot \xi )+\phi ^*(\xi )&\le \phi (x)-\frac{1}{C}|\xi -\xi
(x)|^2,\ \xi \in \Gamma _x^*,\\
\Im (y\cdot \xi )+\phi (y )&\le \phi^* (\xi )-\frac{1}{C}|y -x(\xi
)|^2,\
y \in \Gamma _\xi ,
\end{split}\]
so
$$
-\Im ((x-y)\cdot \xi )+\phi (y)\le \phi (x)-\frac{1}{C}(|\xi -\xi
(x)|^2+|y-x(\xi )|^2).
$$
\medskip
\noindent
The composed contour is a good contour like $\Gamma _R$.

\medskip
\noindent
Thus, up an exponentially small error, we can replace the
composed contour by $\Gamma _R$ for $R$ large enough and get
\[
\begin{split}
&\frac{1}{(2\pi h)^n}\iint_{\Gamma _R(x)}e^{i(x-y)\cdot \xi
  /h}u(y)dyd\xi =\\
&\left(\frac{R}{i2\pi h}
\right)^n\iint_{|x-y|<r}e^{\frac{2}{h}(x-y)\cdot \partial
  _x\phi (x)-\frac{R}{h}|x-y|^2}u(y)dy\wedge d\overline{y}\\
&=(1+{\cal O}(e^{-Rr^2/h}) )u(x)
\end{split}
\]
by the spherical mean-value property for holomorphic functions.\end{proof}
\section{Pseudodifferential operators and Fourier integral operators}
\label{pf}
\setcounter{equation}{0}
Let $a(x,y,\theta ;h)$ be an analytic symbol defined near
$(x_0,x_0,\xi _0)\in {\bf C}^{3n}$, so that $a\in H_{0,(x_0,x_0,\xi_0)}$. Let $\phi \in C^\infty (\mathrm{neigh\,}(x_0,{\bf C}^n);{\bf
  R})$ with $(2/i)\partial _x\phi (x_0)=\xi _0$. For $u\in H_{\phi
  ,x_0}$, we define $Au\in H_{\phi ,x_0}$ by
$$
Au(x;h)=\frac{1}{(2\pi h)^n}\iint_{\Gamma (x)}e^{i(x-y)\cdot \theta
  /h}a(x,y,\theta ;h)u(y;h)dyd\theta ,
$$
where $\Gamma (x)=\Gamma_R(x)$ is the good contour introduced at the
end of the preceding section so that (for $R$ large enough)
$$
e^{-\phi (x)/h}\left| e^{i(x-y)\cdot \theta /h} \right| e^{\phi
  (y)/h}\le e^{-\frac{1}{h}(R-{\cal O}(1))|x-y|^2}
$$
along $\Gamma (x)$. It follows that
$$Au(x;h)=A_\Gamma u(x;h)=\int k_\Gamma (x,y;h)u(y)L(dy),$$
where
$$
|k_\Gamma (x,y;h)|e^{(-\phi (x)+\phi (y))/h}\le C_\Gamma
h^{-n}e^{-\frac{1}{h}(R-{\cal O}(1))|x-y|^2}.
$$
$A_\Gamma $ is uniformly bounded $L^2_{\phi ,x_0}\to L^2_{\phi,x_0}$. Here, we assume for simplicity that $|a(x,y,\theta ;h)|\le
{\cal O}(1)$. Without that assumption we would need to insert a factor $C_\epsilon e^{\epsilon /h}$ to the right in the last estimate and the
boundedness statement about $A_\Gamma $ has to be modified accordingly.

\medskip
\noindent
We define the symbol of $A$ by
$$
\sigma _A(x,\xi ;h)=e^{-ix\cdot \xi /h}A(e^{i(\cdot )\cdot \xi /h}),\
(x,\xi )\in \mathrm{neigh\,}((x_0,\xi _0),{\bf C}^{2n}).
$$
The method of stationary phase gives
$$\sigma _A(x,\xi ;h)\equiv \sum_{|\alpha |\le 1/(Ch)}\frac{1}{\alpha!}(\partial _\xi ^\alpha D_x^\alpha a)(x,x,\xi ;h)$$
and this is (a realization of) a classical analytic symbol when $a$ is a classical analytic
symbol. Clearly $\sigma _A\equiv a$ when $a$ does not depend on $y$.
\begin{lemma}
\label{pf1}
Assume that $\sigma _A=0$ in $H_{0,(x_0,\xi _0)}$. Then
$\exists b\in H_{0,(x_0,x_0,\xi _0)}$ with values in the $(n-1)$-forms
in $\theta $ such that
$$
e^{i(x-y)\cdot \theta /h}a(x,y,\theta )d\theta \equiv ihd_\theta\left(e^{{i}(x-y)\cdot \theta/h}b\right),
\hbox{ in }H_{-{\rm Im}\,((x-y)\cdot \theta ),(x_0,x_0,\xi _0)}.
$$
Applying the Stokes formula along the good contour, it then follows
that $A=0$ as an operator in $H_{\phi ,x_0}$.
\end{lemma}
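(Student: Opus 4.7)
First I would reduce the stated form-valued identity to a scalar one. Writing $b = \sum_{j=1}^n (-1)^{j-1} b_j\, d\theta_1 \wedge \cdots \widehat{d\theta_j} \cdots \wedge d\theta_n$ with scalar $b_j \in H_{0,(x_0,x_0,\xi_0)}$, and using $\partial_{\theta_j} e^{i(x-y)\cdot \theta/h} = (i/h)(x_j - y_j) e^{i(x-y)\cdot\theta/h}$, a direct computation of $d_\theta(e^{i(x-y)\cdot\theta/h} b)$ (in which the $(-1)^{j-1}$ signs exactly compensate the rearrangements $d\theta_j \wedge d\theta_1 \wedge \cdots \widehat{d\theta_j} \cdots \wedge d\theta_n = (-1)^{j-1} d\theta$) shows that the claimed equivalence is equivalent to finding scalar analytic symbols $b_j$ satisfying
\[
a(x,y,\theta) \equiv \sum_{j=1}^n (y_j - x_j)\, b_j(x,y,\theta) + ih \sum_{j=1}^n \partial_{\theta_j} b_j(x,y,\theta) \quad \text{in } H_{0,(x_0,x_0,\xi_0)}. \quad (\star)
\]

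Next I would construct the $b_j$ by the iteration that realizes $\sigma_A$ from $a$. Set $a^{(0)}:=a$ and, recursively,
\[
c_j^{(k)}(x,y,\theta) := \int_0^1 (\partial_{y_j} a^{(k)})\bigl(x,\, x+t(y-x),\, \theta\bigr)\, dt, \qquad a^{(k+1)} := a^{(k)}(x,x,\theta) - ih \sum_{j=1}^n \partial_{\theta_j} c_j^{(k)}.
\]
By Taylor's formula in $y$ around $y=x$, $a^{(k)} = a^{(k)}(x,x,\theta) + \sum_j (y_j - x_j) c_j^{(k)}$, so telescoping gives
\[
a = \sum_{l=0}^{k-1} \left[ \sum_j (y_j - x_j) c_j^{(l)} + ih \sum_j \partial_{\theta_j} c_j^{(l)} \right] + a^{(k)}.
\]
A short induction on $k$ (matching the stationary-phase coefficients $\frac{1}{\alpha!}(hD_y)^\alpha \partial_\theta^\alpha a|_{y=x}$) shows that $a^{(k)}(x,x,\theta)$ is precisely the $k$-th partial sum of the formal symbol expansion of $A$. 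The whole $y$-dependence of $a^{(k)}$ is contained in the term $-ih \sum_j \partial_{\theta_j} c_j^{(k-1)}$, which carries an extra factor of $h$ at each step.

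Finally I would perform the analytic-symbol truncation. Each iteration inserts one $h\partial_\theta$ together with one Taylor-averaged $\partial_y$; hence by the Cauchy inequalities on a fixed polydisc about $(x_0,x_0,\xi_0)$, the iterates satisfy classical-analytic-symbol bounds of the form $|c_j^{(l)}| \le C^{l+1} h^l l^{2l}$ on any slightly smaller polydisc -- this is exactly the framework of the quasi-norm calculus of Section 2.2. Setting
\[
b_j := \sum_{l=0}^{N(h)-1} c_j^{(l)}, \qquad N(h) \sim \frac{1}{Ch},
\]
one checks that $b_j \in H_{0,(x_0,x_0,\xi_0)}$ is a classical analytic symbol, while both the scalar remainder $a^{(N(h))}(x,x,\theta)$ -- which coincides with $\sigma_A$ up to exponentially small error, hence $\equiv 0$ by hypothesis -- and the $y$-dependent piece $-ih \sum_j \partial_{\theta_j} c_j^{(N(h)-1)}$ are bounded by $e^{-1/(C'h)}$. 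This yields $(\star)$. The main obstacle is precisely this uniform analytic control on the $c_j^{(l)}$, i.e.\ the amplitude analogue of Lemmas \ref{cas1}--\ref{cas2}, handled by the same $\|\cdot\|_\rho$ machinery with the extra $y$-variable absorbed through the Taylor-remainder representation of $c_j^{(k)}$.
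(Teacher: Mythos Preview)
Your reduction to the scalar identity $(\star)$ is correct, and the iterative amplitude-reduction scheme you set up is a legitimate alternative strategy. However there is a genuine gap in the analytic estimates. The bound you write, $|c_j^{(l)}|\le C^{l+1}h^l l^{2l}$, is incompatible with the truncation $N(h)\sim 1/(Ch)$: at $l\approx 1/(Ch)$ the quantity $h^l l^{2l}=(l^2h)^l$ blows up rather than becoming $\le e^{-1/(C'h)}$. What you need is $|c_j^{(l)}|\le C^{l+1}h^l l^{l}$, and this does \emph{not} follow from Cauchy's inequalities or from the $\|\cdot\|_\rho$ machinery of Section~\ref{cas} applied naively, since each step of your iteration introduces \emph{two} derivatives (one $\partial_y$ and one $\partial_\theta$), so after $l$ steps you have $2l+1$ derivatives of $a$. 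The missing ingredient is the factorial gain hidden in the nested Taylor integrals: each successive $\partial_{y}$ hits a function of the form $f(x+t_1\cdots t_k(y-x))$ and produces an extra factor $t_1\cdots t_k$, so that the iterated integral over $[0,1]^{l+1}$ carries a weight whose total mass is $1/(l+1)!$. Combined with the Cauchy bound $(2l+1)!$ on the derivatives, this yields $(2l+1)!/(l+1)!\le (2l+1)^l$, hence the required $l^l$ growth. Once you make this point explicit your argument goes through, but as written the key estimate is not justified.

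By contrast, the paper bypasses all of this. After the translation $y\mapsto x-y$ one recognises $(2\pi h)^n\sigma_A(x,\eta)$ as the value at $\theta^*=0$ of the Fourier transform (in the sense of Section~2.4) of $u(x,y,\theta;h)=a(x,x-y,\theta;h)e^{iy\cdot\theta/h}\in H_{-\Im(y\cdot\theta)}$ with respect to $(y,\theta)$. The hypothesis $\sigma_A=0$ then says that $\widehat u(x,\eta,\theta^*)$ vanishes on $\theta^*=0$, so by Taylor's formula $\widehat u=\sum_j\theta_j^*\,\widehat v_j$ with $\widehat v_j\in H_{\Im(\eta\cdot\theta^*)}$; Fourier inversion gives $u=\sum_j hD_{\theta_j}v_j$ in $H_{-\Im(y\cdot\theta)}$, and writing $v_j=b_j e^{iy\cdot\theta/h}$ produces $(\star)$ in one stroke. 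The contour-Fourier machinery absorbs all the analytic-symbol bookkeeping that your iteration has to do by hand.
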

\begin{proof} By a simple change of variables,
\[
\begin{split}
(2\pi h)^n\sigma _A(x,\eta )=\iint e^{-iy\cdot \eta
  /h}\underbrace{a(x,x-y,\theta ;h)e^{iy\cdot \theta
    /h}}_{u(x,y,\theta ;h)}dyd\theta \\
={\cal F}_{(y,\theta )\to (\eta ,\theta ^*)}(u)(\eta ,0;h)=v(x,\eta ,0;h),
\end{split}
\]
where $x$ is treated as a parameter and $v:={\cal F}_{(y,\theta )\to
  (\eta ,\theta ^*)}(u)$.

\medskip
\noindent
We have $u\in H_\phi $, $v\in H_{\phi ^*}$,
$\phi =-\Im (y\cdot \theta ) $, $\phi ^*=\Im (\eta \cdot \theta ^*)$ and we
observe that $\phi $ and $\phi ^*$ are pluri-harmonic. Now $v(x,\eta ,0;h)=0$ and
Taylor's formula gives
$$
v(x,\eta ,\theta ^*;h)=\sum_1^n \widehat{v}_j(x,\eta ,\theta
^*;h)\theta _j^*,\ \widehat{v}_j\in H_{\phi ^*},
$$
and $\widehat{v}_j$ depend holomorphically on $x$. By Fourier
inversion
$$
u(x,y,\theta ;h)=\sum_1^n hD_{\theta _j}v_j \hbox{ in }H_\phi ,\
v_j\in H_\phi ,
$$
so $v_j=b_j(x,y,\theta ;h)e^{iy\cdot \theta /h}$, $b_j\in H_0$. Going
back to the original variables, we get the identity in the
lemma.\end{proof}

\paragraph{General remarks about Fourier integral operators.} Let
$$
\phi (z,y,\theta)\in C^2(\mathrm{neigh\,}((z_0,y_0,\theta _0),{\bf C}^{n_z+n_y+n_\theta });{\bf R}),\quad f\in C^2(\mathrm{neigh\,}(y_0,{\bf C}^{n_y});{\bf R})
$$
be pluri-subharmonic and assume that $(y,\theta )\mapsto \phi (z,y,\theta )+f(y)$ has a col at
$(y_0,\theta _0)$. If $a\in H_{\phi ,(z_0,y_0,\theta _0)}$, we can
define $A:H_{f,y_0}\to H_{g,z_0}$ by
$$
Au(z;h)=\int_{\Gamma _1(z)}a(z,y,\theta ;h)u(y)dyd\theta ,
$$
where $g(z)=\mathrm{vc}_{y,\theta }(\phi (z,y,\theta )+f(y)$ and
$\Gamma _1(z)$ is a good contour.

\medskip
\noindent
Let $b(x,z,w;h)\in H_{\psi ,(x_0,z_0,w_0)}$, $x\in {\bf C}^{n_x}$ and
assume that $\psi ,g$ fulfill the same assumptions as $\phi ,f$. Then
for $v\in H_{g,z_0}$, we define $Bv\in H_{k,x_0}$ by
$$
Bv(x;h)=\int_{\Gamma _2(x)}b(x,z,w ;h)v(z)dzdw ,
$$
where $\Gamma _2(x)$ and $k(x)$ denote a good contour and the
critical value respectively, for $(z,w)\mapsto \psi (x,z,w)+g(z)$.

\medskip
\noindent
We can then define $B\circ A:H_{f,y_0}\to H_{k,x_0}$ by
$$
B\circ Au(x;h)=\iiiint_{\Gamma (x)}b(x,z,w)a(z,y,\theta )u(y)dyd\theta dzdw,
$$
where $\Gamma (x)$ is the composed contour given by $(z,w)\in \Gamma
_2(x)$, $(y,\theta )\in \Gamma _1(z)$. It is a good contour for
$$
(z,w,y,\theta )\mapsto \psi (x,z,w)+\phi (z,y,\theta )+f(y).
$$

\medskip
\noindent
Now assume that
\begin{equation}\label{pf.1}
(z,w)\mapsto \psi (x_0,z,w)+\phi (z,y_0,\theta _0)
\end{equation}
has a col at $(z_0,w_0)$. Let $F(x,y,\theta )$ be the critical value
when $(z,y,\theta )$ varies near $(x_0,y_0,\theta _0)$. Then $F$ is
pluri-subharmonic, and knowing that $(z,w,y,\theta )\mapsto \psi +\phi +f$ has col,
we see that
\begin{equation}\label{pf.2}
(y,\theta )\mapsto F(x,y,\theta )+f(y)
\end{equation}
has a col.
Hence, if $\Gamma _3(x,y,\theta )$ is a good contour for
(\ref{pf.1}) and $\Gamma _4(x)$ a good contour for (\ref{pf.2}), the
composed contour
$$
\widetilde{\Gamma }(x):\ (y,\theta )\in \Gamma _4(x),\ (z,w)\in \Gamma
_3(x,y,\theta )
$$
is good for
$$
(z,w,y,\theta )\mapsto \psi (x,z,w)+\phi (z,y,\theta )+f(y).
$$

\medskip
\noindent
By Stokes, we can replace $\Gamma (x)$ in the formula for $B\circ
Au(x)$ by $\widetilde{\Gamma }(x)$ and write
\[\begin{split}
B\circ Au(x;h)&=\iiiint_{\widetilde{\Gamma } (x)}b(x,z,w)a(z,y,\theta
)u(y)dyd\theta dzdw\\
&=\iint_{\Gamma _4(x)}
\underbrace{\left(\iint_{\Gamma _3(x,y,\theta
      )}b(x,z,w)a(z,y,\theta )dzdw \right)}_{=:c(x,y,\theta )\in
  H_{F,(x_0,y_0,\theta _0)}} u(y)dyd\theta
\end{split}
\]
This remark can be applied to the case when $A$, $B$ are pseudodifferential operators and when
combining it with the stationary phase, we get
\begin{theo}\label{pf2}
Let $A,B:H_{\phi ,x_0}\to H_{\phi ,x_0}$ be two pseudodifferential operators. Then $B\circ A$
is a pseudodifferential operator with symbol
$$
\sigma _{B\circ A}(x,\xi ;h)=\sum_{|\alpha |\le
  \frac{1}{Ch}}\frac{1}{\alpha !}h^{|\alpha |}\partial _\xi ^\alpha
\sigma _B(x,\xi ;h)D_x^\alpha \sigma _A(x,\xi ;h).
$$
\end{theo}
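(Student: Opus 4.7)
The plan is to reduce the composition of two pseudodifferential operators to a single contour-integral representation, and then identify the resulting symbol by holomorphic stationary phase.

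First, using Lemma \ref{pf1}, I would normalize the situation by replacing the amplitudes of $A$ and $B$ by $\sigma_A(x,\theta;h)$ and $\sigma_B(x,\theta;h)$ respectively, i.e.\ assume the amplitudes depend only on the left variable and the frequency variable, up to equivalence in $H_{0,(x_0,x_0,\xi_0)}$. This is legitimate because the lemma says any amplitude whose symbol vanishes gives the zero operator modulo exponentially small terms, so one can always iterate the stationary-phase reduction $a(x,y,\theta)\mapsto \sigma_A(x,\theta)$ and pick up only an equivalent operator.

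Next, I would write out $B\circ A u(x)$ as a four-fold iterated contour integral
\[
\frac{1}{(2\pi h)^{2n}}\iiiint e^{i(x-z)\cdot w/h}\,e^{i(z-y)\cdot \theta/h}\,\sigma_B(x,w)\,\sigma_A(z,\theta)\,u(y)\,dy\,d\theta\,dz\,dw,
\]
where the outer contour is $(y,\theta)\in \Gamma_R(x)$ (the good contour at the end of Section 2.4 / top of Section 2.5) and the inner one is $(z,w)\in \Gamma_R(x,y,\theta)$. Applying the general composition of FIOs carried out just before the theorem (with $\psi$ coming from $B$ and $\phi$ coming from $A$), both contours are good for the pluri-subharmonic total phase and can be grouped so that after Stokes one may integrate first in $(z,w)$ and then in $(y,\theta)$. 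Factoring $e^{i(x-y)\cdot\theta/h}$ out of the total exponent, the problem reduces to
\[
B\circ Au(x;h)=\frac{1}{(2\pi h)^n}\iint_{\Gamma_R(x)}e^{i(x-y)\cdot\theta/h}\,c(x,\theta;h)\,u(y)\,dy\,d\theta,
\]
with
\[
c(x,\theta;h)=\frac{1}{(2\pi h)^n}\iint e^{i(x-z)\cdot(w-\theta)/h}\sigma_B(x,w)\,\sigma_A(z,\theta)\,dz\,dw,
\]
after the harmless substitution $w\mapsto w-\theta$ on a good contour passing through the critical point $(z,w)=(x,\theta)$.

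The core of the argument is then the evaluation of $c(x,\theta;h)$ by holomorphic stationary phase, exactly in the form of Example 3.2: the phase $(x-z)\cdot(w-\theta)$ has the unique nondegenerate critical point $(z,w)=(x,\theta)$ with signature $(n,n)$ and critical value $0$, and the mixed Hessian is $-\mathrm{Id}$ on the relevant totally real subspace. Applying Theorem \ref{stp4} with remainder to the analytic amplitude $\sigma_B(x,w)\sigma_A(z,\theta)$ yields
\[
c(x,\theta;h)\equiv \sum_{|\alpha|\le 1/(Ch)}\frac{1}{\alpha!}\left(\frac{h}{i}\right)^{|\alpha|}\partial_w^\alpha\partial_z^\alpha\bigl(\sigma_B(x,w)\sigma_A(z,\theta)\bigr)\bigg|_{(z,w)=(x,\theta)},
\]
which, upon rewriting $(h/i)^{|\alpha|}\partial_z^\alpha = h^{|\alpha|}D_x^\alpha$ and $\partial_w^\alpha\sigma_B(x,w)|_{w=\theta}=\partial_\xi^\alpha\sigma_B(x,\theta)$, becomes exactly the formula in the statement. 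The remainder of Theorem \ref{stp4} gives an exponentially small error, which is absorbed in the equivalence class of analytic symbols.

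The main obstacle will be the bookkeeping in the middle step: verifying cleanly that the naive composed $4n$-dimensional contour can be split as an inner contour of dimension $2n$ nested inside an outer contour of dimension $2n$, both good for their respective pluri-subharmonic phases, with the right Hessian signatures so that the signature-additivity of Proposition \ref{cif3} and the good-contour inversion formula of Proposition \ref{cif4} apply. Once this deformation is justified, the remainder of the proof is a direct application of analytic stationary phase together with the quasi-norm bounds of Section 2.2, which guarantee that the formal series on the right is a classical analytic symbol with respect to $\|\cdot\|_\rho$.
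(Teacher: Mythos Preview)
Your proposal is correct and follows exactly the route the paper intends: the terse proof in the text just says to apply the general composition remark for Fourier integral operators (the paragraph immediately preceding the theorem) together with stationary phase, and you have spelled this out in detail, including the contour reorganization via Proposition~\ref{cif3} and the evaluation of the inner $(z,w)$-integral by Example~3.2/Theorem~\ref{stp4}. Your preliminary reduction of the amplitudes to $\sigma_A(x,\theta)$, $\sigma_B(x,\theta)$ via Lemma~\ref{pf1} is a legitimate simplification that the paper does not bother to make explicit but which streamlines the stationary-phase step.
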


\section{FBI-transforms and analytic wavefront sets}
\label{fbi}
\setcounter{equation}{0}
Let $\phi \in \mathrm{Hol\,}(\mathrm{neigh\,}((x_0,y_0),{\bf C}^{2n}))$, $y_0\in {\bf R}^n$ and assume that
\begin{equation}\label{fbi.1}
\begin{split}
\phi '_y(x_0,y_0)&=-\eta _0\in {\bf R}^n,\ \Im \phi
''_{yy}(x_0,y_0)>0,\\ &\det \phi ''_{xy}(x_0,y_0)\ne 0.
\end{split}
\end{equation}
Let $a(x,y;h)$ be an elliptic classical analytic symbol defined near
$(x_0,y_0)$ and let $\chi \in C_0^\infty (\mathrm{neigh\,}(y_0,{\bf R}^n))$ be equal to one near $y_0$. If $u\in {\cal D}'({\bf R}^n)$
(or just defined in a neighborhood of the support of $\chi $), we put
\begin{equation}
\label{fbi.1.5}
Tu(x;h)=\int e^{i\phi (x,y)/h}a(x,y;h) \chi (y)u(y)dy,\ x\in
\mathrm{neigh\,}(x_0,{\bf C}^n).
\end{equation}
\begin{prop}\label{fbi1}
$Tu\in H_\Phi (\mathrm{neigh\,}(x_0))$, where
$$
\Phi =\sup_{y\in \mathrm{neigh\,}(y_0,{\bf R}^n)}-\Im \phi
(x,y)\in C^\infty (\mathrm{neigh\,}(x_0,{\bf C}^n);{\bf R}).
$$
\end{prop}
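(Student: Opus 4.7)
The plan is to split the proof into three parts: (a) holomorphy of $Tu$ in $x$, (b) well-definedness and smoothness of $\Phi$, and (c) the exponential bound $|Tu(x;h)| \leq C_\varepsilon e^{(\Phi(x)+\varepsilon)/h}$. Throughout I read $Tu(x;h)$ as the distributional pairing $\langle u, e^{i\phi(x,\cdot)/h} a(x,\cdot;h)\chi(\cdot)\rangle$; since $\chi u$ is compactly supported of some finite order $N$, the pairing makes sense as soon as the test function is $C^N$, which it is. Holomorphy in $x$ then follows by checking that $\overline{\partial}_x$ of the test function vanishes pointwise in $y$ and applying the pairing, since $\phi(x,y)$ and $a(x,y;h)$ are holomorphic in $x$ for each $y$ near $\operatorname{supp}\chi$.

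For (b), I would apply the real implicit function theorem to the critical point equation $\nabla_y(-\operatorname{Im}\phi(x,y)) = -\operatorname{Im}\phi'_y(x,y) = 0$, viewed as $n$ real equations in the real variables $y$ with $x\in \mathbf{C}^n \simeq \mathbf{R}^{2n}$ as parameters. At $(x_0,y_0)$ the hypothesis $\phi'_y(x_0,y_0) = -\eta_0\in\mathbf{R}^n$ makes $y_0$ a solution, and the Jacobian in $y$ equals $\operatorname{Im}\phi''_{yy}(x_0,y_0)$, positive definite by assumption. This gives a unique smooth solution $y=y(x)$ near $y_0$ for $x$ near $x_0$, and since the real Hessian of $-\operatorname{Im}\phi(x,\cdot)$ at $y(x)$ equals $-\operatorname{Im}\phi''_{yy}(x,y(x))<0$, the critical point is a strict nondegenerate local maximum. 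Shrinking the neighborhood of $y_0$ so that it contains no other critical point, one concludes $\Phi(x) = -\operatorname{Im}\phi(x,y(x))$ is a $C^\infty$ function of $x$.

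For (c), choose $\chi$ supported in this small neighborhood. Taylor expansion of $-\operatorname{Im}\phi(x,y)$ around $y(x)$, using that the linear term vanishes and the quadratic term is bounded above by $-c|y-y(x)|^2$, yields
$$-\operatorname{Im}\phi(x,y) \leq \Phi(x) - c|y-y(x)|^2, \qquad y\in \operatorname{supp}\chi,$$
so $|e^{i\phi(x,y)/h}|\leq e^{\Phi(x)/h}$ on $\operatorname{supp}\chi$. By Leibniz and induction, $|\partial_y^\alpha e^{i\phi(x,y)/h}| \leq C_\alpha h^{-|\alpha|} e^{-\operatorname{Im}\phi(x,y)/h}$, while $a$ and $\chi$ contribute only bounded factors (for a suitable realization of the classical analytic symbol $a$, plus at worst a tame power of $h^{-1}$ if one allows finite order). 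Estimating the pairing against $\chi u$ by its $C^N$ norm gives
$$|Tu(x;h)| \leq Ch^{-N-m} e^{\Phi(x)/h} \leq C_\varepsilon e^{(\Phi(x)+\varepsilon)/h}$$
for every $\varepsilon>0$ and $h$ small, which is precisely the defining bound of $H_\Phi(\operatorname{neigh}(x_0))$.

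The main obstacle is really step (b): one must be careful that the implicit function theorem is applied to a \emph{real} system (the critical equation is not holomorphic in $y$), but with a \emph{complex} parameter $x$; once the hypothesis $\operatorname{Im}\phi''_{yy}>0$ is used to invert the Jacobian, the rest is standard calculus and a distributional estimate. Steps (a) and (c) amount to careful bookkeeping; in particular the bound in (c) is weaker than a true pointwise Gaussian domination, which is fine since the definition of $H_\Phi^{\mathrm{loc}}$ already tolerates the subexponential factor $h^{-N-m}$.
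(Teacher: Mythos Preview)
Your argument is correct and follows the same idea as the paper: the key point is that $\mathbf{R}^n\ni y\mapsto -\operatorname{Im}\phi(x,y)$ has a nondegenerate maximum at $y=y(x)$, which the paper states in a single sentence and declares the result evident. You have simply written out the details (implicit function theorem for $y(x)$, Taylor bound below the maximum, distributional estimate absorbing the polynomial loss in $h$), so there is no substantive difference in approach.
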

This is evident since ${\bf R}^n\ni y\mapsto -\Im \phi (x,y)$ has a
non-degenerate maximum at $y=y(x)\in \mathrm{neigh\,}(y_0, {\bf R}^n)$.

\medskip
\noindent
Introduce
$$
\Lambda _\Phi =\{ (x,\frac{2}{i}\partial _x\Phi (x));\; x\in
\mathrm{neigh\,}(x_0,{\bf C}^n) \}
$$
Then (and here we only use that $\Phi $ is real and smooth), the
restriction to $\Lambda _\Phi $ of the complex symplectic 2-form
$\sigma =\sum d\xi _j\wedge dx_j$ is real, so $\Lambda _\Phi $ is an
I-Lagrangian manifold, i.e. a Lagrangian manifold for the real
symplectic form $\Im \sigma $.
\begin{prop}\label{fbi2}
$\Lambda _\Phi =\kappa _T({\bf R}^{2n})$, where
$$\kappa _T:\,
\mathrm{neigh\,}((y_0,\eta _0))\ni (y,-\phi
'_y(x,y))\mapsto (x,\phi '_x(x,y)) \in \mathrm{neigh\,}((x_0,\xi
_0))
$$
is the complex canonical transformation associated to $T$, when
viewed as a Fourier integral operator. Here $(x_0,\xi _0)=\kappa _T(y_0,\eta
_0)=(x_0,(2/i)\partial _x\Phi (x_0))$. In particular ${{\sigma
  }_\vert}_{\Lambda _\Phi }$ is real and non-degenerate. ($\Lambda
_\Phi $ is I-Lagrangian and R-symplectic.) Further, $\Phi $ is strictly pluri-subharmonic.
\end{prop}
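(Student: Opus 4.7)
The plan is to follow the same line of argument as in the proof of Proposition 1.3.2 of the first chapter, with the only adaptation being that we now work in local (non-quadratic) smooth categories and apply the implicit function theorem instead of explicit formulas.

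First I would show that for every $x$ in a small complex neighborhood of $x_0$ there is a unique real point $(y(x),\eta(x))\in \mathbf{R}^{2n}$ mapped to $(x,\xi(x))$ by $\kappa_T$ for some $\xi(x)\in\mathbf{C}^n$. The defining equation is $\phi'_y(x,y(x)) \in \mathbf{R}^n$, i.e.\ $\operatorname{Im}\phi'_y(x,y(x))=0$, and the condition $\operatorname{Im}\phi''_{yy}(x_0,y_0)>0$ together with the implicit function theorem produces a smooth solution $y(x)$, real-valued, with $y(x_0)=y_0$. This $y(x)$ coincides with the unique critical point of $y\mapsto -\operatorname{Im}\phi(x,y)$ that realizes the supremum in the definition of $\Phi$. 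Setting $\eta(x):=-\phi'_y(x,y(x))\in\mathbf{R}^n$ and $\xi(x):=\phi'_x(x,y(x))$, we then have $\kappa_T(y(x),\eta(x))=(x,\xi(x))$, and $\kappa_T$ being a local complex canonical transformation (the nondegeneracy $\det\phi''_{xy}\neq 0$ guaranteeing local bijectivity) gives $\kappa_T(\mathbf{R}^{2n})=\{(x,\xi(x))\}$.

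The key computation is then to identify $\xi(x)$ with $(2/i)\partial_x\Phi(x)$. Writing $\Phi(x)=(i/2)(\phi(x,y(x))-\overline{\phi(x,y(x))})$ and differentiating with respect to $x$, using that $\phi$ is holomorphic and $y(x)$ is real (so that $\partial_{\bar x}y(x)=\overline{\partial_x y(x)}$), one obtains
\[
2\partial_x\Phi(x)=i\phi'_x(x,y(x))-2\operatorname{Im}\phi'_y(x,y(x))\cdot\partial_x y(x).
\]
The second term vanishes because $y(x)$ is the critical point, and so $\xi(x)=(2/i)\partial_x\Phi(x)$, which is exactly the statement $\kappa_T(\mathbf{R}^{2n})=\Lambda_\Phi$. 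I expect this chain-rule bookkeeping to be the only spot requiring genuine care; everything else is structural.

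For the remaining assertions I would argue as follows. Since $\kappa_T$ is a complex canonical transformation, $\kappa_T^*\sigma=\sigma$; restricting to $\mathbf{R}^{2n}$, where $\sigma$ coincides with the real symplectic form $d\xi\wedge dx$, we conclude that $\sigma|_{\Lambda_\Phi}$ is real and nondegenerate, so $\Lambda_\Phi$ is $\mathbf{R}$-symplectic (and automatically I-Lagrangian as already noted). Finally, the pluri-subharmonicity of $\Phi$ is already known (it is the supremum of the pluri-harmonic family $-\operatorname{Im}\phi(\cdot,y)$, $y\in\mathbf{R}^n$, so its Levi form is nonnegative). The computation (\ref{eq1.6}) shows that the nondegeneracy of $\sigma|_{\Lambda_\Phi}$ is equivalent to nondegeneracy of $\partial_{\bar x}\partial_x\Phi$; combined with $\partial_{\bar x}\partial_x\Phi\geq 0$ this forces $\partial_{\bar x}\partial_x\Phi>0$, i.e.\ $\Phi$ is strictly pluri-subharmonic.
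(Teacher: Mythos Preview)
Your proposal is correct and follows essentially the same approach as the paper's proof: identify the real critical point $y(x)$ of $-\Im\phi(x,\cdot)$ as the unique point where $\phi'_y$ is real, compute $\frac{2}{i}\partial_x\Phi(x)=\phi'_x(x,y(x))$ via the chain rule (with the $\Im\phi'_y$ term dropping out), and then deduce strict pluri-subharmonicity from the nondegeneracy of $\sigma|_{\Lambda_\Phi}$ together with the a priori pluri-subharmonicity of $\Phi$ as a supremum of pluri-harmonic functions. The only difference is that you spell out the chain-rule bookkeeping in slightly more detail than the paper does.
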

\begin{proof} The real critical point of $-\Im \phi
(x,\cdot )$ is characterized by the property that $\eta (x):=-\phi
'_y(x,y(x))$ is real. Further,
$$
\frac{2}{i}\partial _x\Phi (x)=\frac{2}{i}(\partial _x(-\Im \phi
))(x,y(x))=\phi '_x(x,y(x)).
$$
Hence $\Lambda _\Phi $ is contained in $\kappa _T({\bf R}^{2n})$ and
the two manifolds have the same dimension so they have to coincide
(near $(x_0,\xi _0)$).

\medskip
\noindent
We then know that
$$
{{\sigma }_\vert}_{\Lambda _\Phi }=\sum_1^n d\left(\frac{2}{i}\partial
_{x_j}\Phi (x)\right) \wedge
dx_j=\frac{2}{i}\sum_{k}\sum_j\partial _{\overline{x}_k}\partial
_{x_j}\Phi\, d\overline{x}_k\wedge dx_j
$$
is non-degenerate, so the Levi-form of $\Phi $ is non-degenerate. Since
$\Phi $ by definition is the supremum of the family of pluri-harmonic functions
$x\mapsto -\Im \phi (x,y)$ we know that $\Phi $ is pluri-subharmonic and hence
strictly pluri-subharmonic.
\end{proof}

\medskip
\noindent
For $y\in {\bf R}^n$ (close to $y_0$) let
$$\Gamma _y=\{ x\in {\bf
  C}^n; y(x)=y \}=\pi _x\kappa _T(T_y^*{\bf R}^n),$$
where $\pi _x:{\bf C}_{x,\xi }^{2n}\to {\bf C}_x^n$ is the natural projection,
 so that
$\Gamma _y$ is of real dimension $n$ and the $\Gamma _y$ form a
foliation of $\mathrm{neigh\,}(x_0,{\bf C}^n)$.
$\Gamma _y$ is totally real: $T_x\Gamma _y\cap iT_x\Gamma _y=0$,
$\forall x\in \Gamma _y$. In fact, $T_x\Gamma _y=\{ t_x\in {\bf
  C}^n;\, \phi ''_{yx}t_x\in {\bf R}^n \}$.

\medskip
\noindent
For every fixed real $y$:
\begin{equation}
\label{fbi.2}
\Phi (x)+\Im \phi (x,y)=-\Im \phi (x,y(x))+\Im \phi (x,y)\asymp
\mathrm{dist\,}(x,\Gamma _y)^2.
\end{equation}
Since $x\mapsto -\Im \phi (x,y)$ is pluri-harmonic, this gives another proof of
the fact that $\Phi (x)$ is strictly pluri-subharmonic.

\medskip
\noindent
{\bf Exercise} Explore the standard case of Bargmann transforms with $\phi (x,y)=i(x-y)^2/2$.

\medskip
\noindent
{\bf Exercise} Let $f(y)$ be analytic near $y_0$,
real valued on the real domain and with $f'(y_0)=\eta _0$. Show that
$$
T(e^{if/h})=h^{n/2}c(x;h)e^{ig(x)/h},
$$
where $c(x;h)$ is a classical analytic symbol of order $0$ and
$$g(x)=\mathrm{vc}_{y\in \mathrm{neigh\,}(y_0,{\bf C}^n)}(\phi
(x,y)+f(y))$$
is holomorphic, $\Lambda _g:=\{ (x,g'(x)) \}=\kappa
_T(\Lambda _f)$ where $\Lambda _f$ is defined as $\Lambda _g$.

\medskip
\noindent
Let $(\Lambda _f)_{\bf R}=\Lambda _f\cap {\bf R}^{2n}$. Show that
$-\Im g\le \Phi $ and that more precisely,
\begin{equation}
\label{fbi.29}
\Phi (x)+\Im g(x)\asymp \mathrm{dist\,}(x,\pi _x(\kappa _T((\Lambda_f)_{\bf R})))^2.
\end{equation}
Observe also that $\pi _x(\kappa _T((\Lambda _f)_{\bf R}))$ is transversal to $\Gamma _y$.

\medskip
\noindent
Assume that $\eta _0\ne 0$. For $x\in \mathrm{neigh\,}(x_0)$, write
$$
(y(x),\eta (x))=(y(x),-\partial _y\phi (x,y(x)))\in T^*{\bf
  R}^n\setminus 0,
$$
where $y(x)$ is the local real maximum of $-\Im \phi (x,\cdot
)$. Also, we have
$$
(y(x),\eta (x))=\kappa _T^{-1}(x,\frac{2}{i}\partial _x\Phi (x)).
$$
\begin{dref}\label{fbi3} Let $u$ be a distribution defined near
  $y_0$, independent of $h$. We say that $(y(x),\eta (x))\not\in
  \mathrm{WF}_a(u)$ if $Tu=0$ in $H_{\Phi ,x}$.
\end{dref}
We shall see that this defines a closed conic subset
$\mathrm{WF}_a(u)$ of $T^*(\mathrm{neigh\,}(y_0,{\bf R}^n))\setminus
0$, {\it independent } of the choice of $T$.

\medskip
\noindent
In order to prove that the definition does not depend on the
choice of $T$ we would
like to construct ``the inverse $T^{-1}$''. However, this can never
succeed completely since $Tu$ only carries microlocal information
about $u$ near $(y_0,\eta _0)$. We can however give meaning to this
inverse on certain smaller spaces and that will suffice to be able to
describe a second FBI-transform $\widetilde{T}u$ in terms of $Tu$.

\medskip
\noindent
Put
\begin{equation}\label{fbi.3}Sv(x;h)=h^{-n}\int e^{-i\phi (z,x)/h}b(z,x;h)v(z)dz,\end{equation}
where $b$ is an elliptic classical analytic symbol of order 0, defined
near $(x_0,y_0)$. Formally,
\begin{equation}\label{fbi.4}
STu(x;h)=h^{-n}\iint e^{i(-\phi (z,x)+\phi (z,y))/h}b(z,x;h)a(z,y;h)
u(y)dydz
\end{equation}
and we can apply the Kuranishi trick (change of variables in $z$) to
see that formally
\begin{equation}\label{fbi.5}
STu(x;h)=\frac{1}{(2\pi h)^n}\iint e^{\frac{i}{h}(x-y)\cdot \theta
}c(x,y,\theta ;h) u(y)dyd\theta ,
\end{equation}
where $c$ is an elliptic classical analytic symbol of order $0$,
defined near $(y_0,y_0,\eta _0)$. According to Lemma \ref{pf1} and the
previously given definition of the symbol of a pseudodifferential operator, we can replace
$c$ by $\widetilde{c}(x,\theta ;h)$, independent of $y$ and still elliptic
to get a new pseudodifferential operator which has the same action on expressions as in the
last exercise above.

\medskip
\noindent
Let $\widetilde{d}$ satisfy $\widetilde{d}\#
\widetilde{c}=1$. Then
$$
\widetilde{d}(x,hD_x;h)\circ ST =1
$$
when acting on functions as in the exercise. On the other hand we can
apply stationary phase to get formally
$$
\widetilde{d}(x,hD;h)Sv=h^{-n}\int e^{-i\phi /h }\widetilde{b}v(z)dz=:\widetilde{S}v(x;h)
$$
Our compositions are well defined and hence associative when restricted
to expressions as in the exercise and we therefore get
$$\widetilde{S}T=1.$$
Dropping the tildes, we have shown that we can find $S$ of
the form (\ref{fbi.3}) such that
$$ST=1$$
when acting on expressions as in the exercise.

\medskip
\noindent
When trying to define $Sv(x;h)$ for $v\in H_\Phi$, we would like to have a contour $\Gamma$ in $z$ space such that
$$
\Im \phi (z,x)+\Phi (z)\le 0,\ z\in \Gamma ,
$$
with strict inequality near the boundary. In view of (\ref{fbi.2}) the best possible choice in general is $\Gamma =\Gamma _x$ and we then just achieve equality.

\medskip
\noindent
If however $v\in H_\Psi $, where $\Psi -\Phi \asymp -\mathrm{dist\,}(z,\widetilde{\Gamma })^2$ and $\widetilde{\Gamma }$
is a real manifold of dimension $n$ transversal to $\Gamma _x$, then $Sv$ is well-defined. In particular if $u$ is as in the exercise, $v=Tu$, this
is the case with $\Psi =-\Im g$, so $Sv$ is well-defined up to an exponentially small ambiguity, and  we get $Sv\equiv u$ in $H_{-{\rm Im}\, f}$.

\medskip
\noindent
Let
$$\widetilde{T}u(x;h)=\int e^{i\widetilde{\phi
  }(x,y)/h}\widetilde{a}(x,y;h)u(y)dy$$ be a second FBI-transform with
$\widetilde{\phi }$, $\widetilde{a}$ defined near
$(\widetilde{x}_0,y_0)$ and with $-\widetilde{\phi }'_y(\widetilde{\xi
}_0,y_0)=\eta _0$. Then formally
\begin{equation}\label{fbi.6}
\widetilde{T}Sv(x;h)=h^{-n}\iint e^{\frac{i}{h}(\widetilde{\phi
  }(x,y)-\phi (z,y))}\widetilde{a}(x,y;h)a(z,y;h)u(y)dy dz.
\end{equation}
This is a Fourier integral operator\footnote{A general local theory
  for Fourier integral operators can be developed in the spirit of
  Section \ref{pf}. See \cite{Sj82}, Chapter 11.} with associated canonical transformation $\kappa _{\widetilde{T}}\circ \kappa
_T^{-1}$, mapping $\Lambda _\Phi $ to $\Lambda _{\widetilde{\Phi }}$
and it follows from this observation, or by direct verification, that
$$
(y,z)\mapsto -\Im \widetilde{\phi }(x,y)+\Im \phi (z,y)+\Phi (z)=:F
$$
has a non-degenerate critical point, given by the conditions
$$
(z,\frac{2}{i}\partial _z\Phi (z))=\kappa _T(y,\eta ),\
(x,\frac{2}{i}\partial _x\widetilde{\Phi } (x))=\kappa _{\widetilde{T}}(y,\eta ),
$$
where $(y,\eta )$ is real ($y=y(z)=\widetilde{y}(x)$, $\eta =\eta
(z)=\widetilde{\eta }(z) $).

\medskip
\noindent
Next, we show that there is a good contour for (\ref{fbi.6}):
As a first attempt, we take $y\in {\bf R}^n$, $z\in \Gamma _y$. Along
that contour we have
$$
F(y,z)-\widetilde{\Phi }(x)=-(\widetilde{\Phi }(x)+\Im \widetilde{\phi
}(x,y))\asymp -|y-\widetilde{y}(x)|^2.
$$
Thus the contour is ``almost good''. Since our critical point is
non-degenerate, it is then clear that we can make a small deformation
and find a good contour. In conclusion
$$\widetilde{T}S\hbox{ is a well-defined Fourier integral operator }H_{\Phi ,x_0}\to
H_{\widetilde{\Phi },\widetilde{x}_0}.$$
\begin{prop}\label{fbi4}
For $x\in \mathrm{neigh\,}(x_0)$,
$\widetilde{x}\in\mathrm{neigh\,}(\widetilde{x}_0)$ related by $$
\widetilde{\kappa }
_{\widetilde{ T}}^{-1}(\widetilde{ x},(2/i)\partial_{\widetilde{ x}}
\widetilde{\Phi } (\widetilde{ x}))=
\kappa
_T^{-1}(x,(2/i)\partial_x \Phi (x)),$$ the following two statements are equivalent:
\begin{itemize}
\item[1)] $\widetilde{T}u=0$ in $H_{\widetilde{\Phi },\widetilde{x}}$.
\item[2)] ${T}u=0$ in $H_{{\Phi },{x}}$.
\end{itemize}
\end{prop}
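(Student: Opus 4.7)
The plan is to deduce the invariance of the definition of $\mathrm{WF}_a$ from the ``almost inverse'' identity $ST=1$ constructed just above the proposition, together with the observation already made in the text that $\widetilde T S$ is a well-defined Fourier integral operator $H_{\Phi,x_0}\to H_{\widetilde\Phi,\widetilde x_0}$ with associated canonical transformation $\kappa_{\widetilde T}\circ\kappa_T^{-1}$. By the symmetric roles of $T$ and $\widetilde T$, it suffices to prove $1)\Rightarrow 2)$, so assume $Tu=0$ in $H_{\Phi,x}$ at the distinguished point $x$ and let $\widetilde x$ be the corresponding point in the hypothesis of the proposition.

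The first step is to prove a germ-preservation property for the FIO $B:=\widetilde T S$: if $v\in H_{\Phi,x_0}$ vanishes in $H_{\Phi,x}$, then $Bv$ vanishes in $H_{\widetilde\Phi,\widetilde x}$. Starting from the composed oscillatory representation of the kernel of $B$, I would apply stationary phase in the intermediate $y$-variable (non-degenerate thanks to $\Im\phi''_{yy},\Im\widetilde\phi''_{yy}>0$) and deform to a good contour passing through the unique critical point coming from $\kappa_{\widetilde T}\circ\kappa_T^{-1}$. Taylor-expanding the phase about this critical point, as in the derivation of (\ref{fbi.2}) and in the Fourier-inversion contour analysis of Section~2.4, yields a Gaussian estimate
\[
\bigl|e^{-\widetilde\Phi(\widetilde x)/h}\,B(\widetilde x,z)\,e^{\Phi(z)/h}\bigr|\le Ch^{-M}\exp\!\left(-\frac{|z-z(\widetilde x)|^2}{Ch}\right),
\]
where $z(\widetilde x)=\pi_x\circ\kappa_T\kappa_{\widetilde T}^{-1}(\widetilde x,(2/i)\partial_{\widetilde x}\widetilde\Phi(\widetilde x))$ is exactly the point $x$ of the proposition. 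A Schur-lemma argument on an arbitrarily small neighborhood of $\widetilde x$ then gives the desired localization property of $B$.

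The second step is to compare $\widetilde T u$ with $B(Tu)=(\widetilde T S)Tu$. By the associativity of FIO composition justified through the composed-contour argument preceding Theorem~\ref{pf2}, one has $(\widetilde T S)T=\widetilde T(ST)$. The construction of $S$ (via the Kuranishi reduction and the symbolic identity $\widetilde d\#\widetilde c=1$) shows that $ST$ is a classical analytic pseudodifferential operator whose total symbol is identically $1$ in a neighborhood of $(y_0,\eta_0)$, so that $\widetilde T\circ(ST-I)$ is a Fourier integral operator whose symbol vanishes identically near the relevant base point and therefore acts as $\mathcal O(e^{-1/(Ch)})$ on the distribution $u$, with values in $H_{\widetilde\Phi,\widetilde x_0}$. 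Combining the two steps,
\[
\widetilde T u \;=\; B(Tu)+\mathcal O(e^{-1/(Ch)}) \quad\text{in }H_{\widetilde\Phi,\widetilde x_0},
\]
and the hypothesis $Tu=0$ in $H_{\Phi,x}$ together with the first step forces $B(Tu)=0$ in $H_{\widetilde\Phi,\widetilde x}$, yielding $\widetilde T u=0$ in $H_{\widetilde\Phi,\widetilde x}$.

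The main obstacle is the symbolic-calculus step in promoting the equality $ST=1$, established only on the analytic exponentials $e^{if/h}$, to an equality of \emph{full} classical analytic symbols --- so that the error $ST-I$ is genuinely exponentially small rather than merely $\mathcal O(h^\infty)$. This rests on the fact that a classical analytic pseudodifferential operator is determined by its action on such exponentials, combined with the Borel-summation and quasi-norm bounds of Section~\ref{cas} which convert a symbol vanishing in a fixed neighborhood into an exponentially decaying remainder. Once this point is secured, the remainder of the argument is bookkeeping with the contour deformations and Gaussian estimates already present in the text.
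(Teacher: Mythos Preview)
Your overall architecture is right and matches the paper: one wants the identity $\widetilde Tu=(\widetilde TS)(Tu)$ in $H_{\widetilde\Phi,\widetilde x_0}$, after which the mapping property of the Fourier integral operator $\widetilde TS:H_{\Phi,x_0}\to H_{\widetilde\Phi,\widetilde x_0}$ finishes the proof. (Your ``first step'' is more than is needed; the germ-level mapping property already established just before the proposition suffices. Also, a minor slip: assuming $Tu=0$ is assuming 2), so you are proving $2)\Rightarrow 1)$.)

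The genuine gap is in your second step. You invoke associativity to write $(\widetilde TS)T=\widetilde T(ST)$ and then treat $\widetilde T(ST-I)$ as an FIO with vanishing amplitude acting on the distribution $u$. But the text has been explicit that $S$ is \emph{not} well-defined on $H_\Phi$: along the best possible contour $\Gamma_x$ one only gets $\Im\phi(z,x)+\Phi(z)=0$, not strict negativity. Hence $S(Tu)$, and therefore $STu$, has no meaning for a general distribution $u$, and the composed-contour associativity argument of Section~\ref{pf} does not apply because one of the intermediate contours fails to be good. Your final paragraph correctly identifies this as the crux, but the proposed fix --- that $ST$ has full analytic symbol equal to $1$ and so $ST-I$ is exponentially small --- still presupposes a realization of $ST$ acting on $u$, which is precisely what is missing.

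The paper closes this gap by a device you do not use: it replaces $u$ by $\chi(hD_y)u$ with $\chi\in C_0^\infty(\mathrm{neigh}(\eta_0))$ equal to $1$ near $\eta_0$, observing via Fourier inversion and contour deformation that $Tu=T\chi(hD_y)u$ and $\widetilde Tu=\widetilde T\chi(hD_y)u$ in the relevant germ spaces. The point is that $\chi(hD_y)u$ is a superposition of plane waves $e^{iy\cdot\eta/h}$, which are exactly the states on which $ST=1$ has already been established rigorously. For each such plane wave one can then legitimately pass from $\widetilde T\circ S\circ T$ to the composed FIO $(\widetilde TS)\circ T$ by deforming to the good contour constructed for $\widetilde TS$, and integrating over $\eta$ gives $\widetilde Tu=(\widetilde TS)(Tu)$ in $H_{\widetilde\Phi,\widetilde x_0}$. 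This Fourier localization is the missing idea in your argument.
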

\begin{proof}
Take $x=x_0$, $\widetilde{x}=\widetilde{x}_0$ for simplicity. Let
$\chi \in C_0^\infty (\mathrm{neigh\,}(\eta _0,{\bf R}^n))$ be equal
to one near $\eta _0$. Without loss of generality, we may assume that
the distribution $u$ has compact support in a neighborhood of
$y_0$. Then from the (classical!) Fourier inversion formula,
$$
u(x)=\frac{1}{(2\pi h)^n }\int e^{ix\cdot \eta /h}{\cal F}u(\eta )d\eta ,
$$
and contour deformations, we see that
$$Tu=T\chi (hD_y)u\hbox{ in }H_{\Phi ,x_0},\quad
\widetilde{T}u=\widetilde{T}\chi (hD_y)u\hbox{ in }H_{\widetilde{\Phi } ,\widetilde{x}_0}.$$

\medskip
\noindent
On the other hand $v=\chi (hD_y)u$ is a superposition of plane waves
(special cases of states as in the last exercise), so
$$
\chi (hD_y)u=ST\chi (hD_y)u+{\cal O}(e^{-1/Ch}),
$$
where now
$$
Sv(y)=\int_{\Gamma _y}e^{-i\phi (x,y)/h}b(x,y;h)v(x)dx.
$$
Consequently,
$$
\widetilde{T}\chi (hD_y)u=\widetilde{T}\circ ST\chi (hD_y)u\hbox{ in
}H_{\widetilde{\Phi },\widetilde{x}_0}.
$$
Here, for each plane wave in $\chi (hD_y)u$, we can make a contour
deformation to the good contour  discussed above for the Fourier integral operator
$\widetilde{T}S$ and putting everything together, we get
$$
\widetilde{T}u=(\widetilde{T}S)(Tu)\hbox{ in }H_{\widetilde{\Phi },\widetilde{x}_0}.
$$
Since the Fourier integral operator $\widetilde{T}S$ maps $H_{\Phi ,x_0}\to
H_{\widetilde{\Phi },\widetilde{x}_0}$, we see that $\widetilde{T}u=0$
in $H_{\widetilde{\Phi },\widetilde{x}_0}$ if $Tu=0$ in
$H_{\widetilde{\Phi },\widetilde{x}_0}$. The converse
implication also holds.\end{proof}

\medskip
\noindent
This shows that the definition of $\mathrm{WF}_a(u)$ does not depend
on the choice of $T$. By a simple dilation in $h$ we then see that it
is a conic subset of $T^*X\setminus 0$ (if $X\subset {\bf R}^n$ is the open set where
$u$ is defined).
Another basic property of the analytic wavefront set is given by
\begin{prop}
\label{fbi5}
We have
$$
\pi _y(\mathrm{WF}_a(u))=\mathrm{Sing\, Supp}_a(u),
$$
where the right hand side denotes the analytic singular support,
i.e. the complement in $X$ of the largest open subset where $u$ is
real analytic.
\end{prop}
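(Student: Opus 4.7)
I would prove the equality by establishing both inclusions separately. Throughout, I would fix an FBI transform $T$ as in Definition 2.6.3 and exploit the independence of $\mathrm{WF}_a(u)$ from this choice (Proposition 2.6.4).

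For $\pi_y(\mathrm{WF}_a(u)) \subseteq \mathrm{Sing\, Supp}_a(u)$, I would argue contrapositively: assume $u$ is real-analytic in a real neighborhood of $y_0$, so that $u$ extends holomorphically to a complex neighborhood $\widetilde V$ of $y_0$, and show that $(y_0,\eta_0) \notin \mathrm{WF}_a(u)$ for every $\eta_0 \ne 0$. Given $\eta_0$, choose $T$ with $-\phi'_y(x_0,y_0)=\eta_0$ and arrange $\mathrm{supp}(\chi) \subset \widetilde V \cap {\bf R}^n$. The strategy is to deform the ${\bf R}^n$-contour in the $y$-integration of (\ref{fbi.1.5}) into the complex domain in a direction that strictly decreases $-\Im\phi(x,\cdot)$ at the critical point $y(x)$. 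After replacing $a$ by a holomorphic realization (with $O(e^{-1/Ch})$ error) and $\chi$ by an almost-analytic extension $\widetilde\chi$ satisfying $\bar\partial\widetilde\chi = {\cal O}(|\Im y|^\infty)$, I would deform ${\bf R}^n$ to the contour
$$
\Gamma(x):\; y=y'-it\rho(y')\eta_0,\quad y'\in{\bf R}^n,
$$
where $\rho\in C_0^\infty({\bf R}^n)$ equals $1$ near $y_0$ and $t>0$ is small. A direct Taylor expansion using $\phi'_y(x_0,y_0)=-\eta_0$ gives $-\Im\phi(x,y)\le \Phi(x)-t|\eta_0|^2/2$ on the deformed part of $\Gamma(x)$ near $y_0$, while $-\Im\phi(x,y)\le\Phi(x)-c$ away from $y_0$ follows from the quadratic maximum property of $y(x)$. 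Stokes' formula, together with the exponentially small contribution from $\bar\partial\widetilde\chi$, then yields $|Tu(x)|\le Ce^{(\Phi(x)-\delta)/h}$ for $x$ near $x_0$ and some $\delta>0$, i.e. $Tu\in H_{\Phi-\delta,x_0}$.

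For $\mathrm{Sing\, Supp}_a(u)\subseteq \pi_y(\mathrm{WF}_a(u))$, I would again argue contrapositively: assume $(y_0,\eta)\notin\mathrm{WF}_a(u)$ for every $\eta\in{\bf R}^n\setminus 0$ and show that $u$ extends to a holomorphic function on a complex neighborhood of $y_0$. By the conic invariance of $\mathrm{WF}_a$ and compactness of $S^{n-1}$ in $\eta$, the decay of $Tu$ is uniform on a neighborhood of the image $\pi_x\kappa_T(\{y_0\}\times S^{n-1})$. Combined with a dyadic partition of unity in $\eta$, the localizations $T_ju$ to narrow $\eta$-cones satisfy $T_ju\in H_{\Phi_j-\delta_j,x_j}$ for some $\delta_j>0$. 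Using the approximate inverses $S_j$ from (\ref{fbi.3}) and the identity $u\equiv \sum_j S_jT_ju$ modulo ${\cal O}(e^{-1/Ch})$, the task reduces to showing that each $S_jT_ju$ extends holomorphically to a common complex neighborhood of $y_0$. This is accomplished by deforming the $x$-contour $\Gamma_y$ defining $S_j$ into the complex domain as $y$ is continued to complex values, keeping $\Gamma_y$ inside the region where $T_ju$ enjoys strict exponential decay, while preserving the good-contour condition $-\Im\phi_j(x,y)+\Phi_j(x)\le -c|x-x_j(y)|^2$.

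The main obstacle will be this last deformation step: one must coordinate, uniformly in the direction $\eta_j$, the deformation of $\Gamma_y$ for complex $y$ so that it simultaneously lies inside the decay region of $T_ju$ and preserves the good-contour inequality for the phase $-\Im\phi_j(x,y)+\Phi_j(x)$, so that the Cauchy-type integral defining $S_jT_ju$ converges absolutely and depends holomorphically on $y$ near $y_0\in{\bf C}^n$. The first inclusion is more routine, but even there one must use the almost-analytic cutoff trick to apply Stokes' formula through the region where $\chi$ is only smooth; this is where the analytic-cutoff machinery alluded to in the introduction (Ehrenpreis/H\"ormander sequences) is genuinely needed.
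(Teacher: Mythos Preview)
Your plan for the inclusion $\pi_y(\mathrm{WF}_a(u))\subseteq\mathrm{Sing\,Supp}_a(u)$ is the standard contour-deformation argument and is correct. One small simplification: if you choose $\rho$ with $\mathrm{supp}\,\rho\subset\{\chi=1\}$, then on the deformed region the integrand is genuinely holomorphic in $y$ (since $u$ extends and $\chi\equiv 1$ there), so Stokes' formula gives no $\overline{\partial}$-term at all and you do not need an almost-analytic extension of $\chi$. The paper does not treat this inclusion explicitly; it is taken as the routine direction.

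For the reverse inclusion your approach diverges from the paper's, and there is a real gap. The identity $u\equiv\sum_j S_jT_ju$ modulo ${\cal O}(e^{-1/Ch})$ that you invoke is not established anywhere in the text: the relation $ST=1$ is only proved on WKB states $e^{if/h}$, and in the proof of Proposition~\ref{fbi4} it is applied after first writing $\chi(hD_y)u$ as a superposition of plane waves. To make your scheme work you would have to insert frequency cutoffs $\chi_j(hD_y)$ with $\sum_j\chi_j=1$, obtain $\sum_j S_jT_j\chi_j(hD_y)u\approx u$, and then analytically continue each term. But the right-hand side now depends on $h$, and to conclude that the $h$-independent distribution $u$ is analytic you need uniform holomorphic bounds as $h\to 0$ on a fixed complex neighborhood, which requires additional argument beyond what you have sketched. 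The contour deformation you flag as the ``main obstacle'' is a genuine difficulty too, but this reconstruction step is the more fundamental missing ingredient.

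The paper's route avoids this entirely. It uses a \emph{continuous} Gaussian resolution of the identity $1=\int_{T^*{\bf R}^n}\pi_\alpha\,d\alpha$, where $\pi_\alpha$ is a rank-one coherent-state projector concentrated at $\alpha=(\alpha_y,\alpha_\eta)$. The hypothesis $y_0\notin\pi_y(\mathrm{WF}_a(u))$ together with conic invariance and compactness of the $\eta$-sphere gives, by an adaptation of the argument in Proposition~\ref{fbi4}, that $\pi_\alpha u$ decays like $e^{-|\alpha_\eta|/C}$ uniformly for $\alpha_y$ near $y_0$. Since each Gaussian in the resolution extends holomorphically in $y$ with controlled growth, the integral $u=\int\pi_\alpha u\,d\alpha$ then continues holomorphically to a fixed complex neighborhood of $y_0$ directly. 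The advantage of this approach is that the resolution is exact, so there is no $h$-dependent approximation to unwind.
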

\medskip
\noindent
{\bf Idea of the proof.} We start by using a resolution of
the identity of the form $1=\int_{T^*{\bf R}^n} \pi _\alpha d\alpha $
where $\pi _\alpha $ is a Gaussian Fourier integral operator ``concentrated at $\alpha
$''. If $y_0\not\in \pi _y(\mathrm{WF}_a(u))$, then a simple
adaptation of the proof above shows that $\pi _\alpha u$ decays
exponentially when $\alpha_\eta  $ tends to infinity while $\alpha _y$
is confined to a small neighborhood of $y_0$. (Here we write $\alpha
=(\alpha _y,\alpha _\eta ) $.)

\section{Egorov's theorem and elliptic regularity.}
\setcounter{equation}{0}
Let $\widetilde{P}(y,D_y)=\sum_{|\alpha |\le m}a_\alpha (y)D_y^\alpha
$ be a differential operator with analytic coefficients, defined on an open set $X\subset
{\bf R}^n$. Let $T$ be an FBI-transform as
above. Then we have the Egorov theorem  which states that there exists
a pseudodifferential operator with classical analytic symbol, $P(x,hD_h;h):H_{\Phi,x_0}\to
H_{\Phi ,x_0}$ such that
$$
PTu=Th^m\widetilde{P}u \hbox{ in }H_{\Phi ,x_0}
$$
when $u\in {\cal D}'(X)$ is independent of $h$. Indeed, we can take
$P=Th^m\widetilde{P}S$. For the leading symbols, we have the relation
\begin{equation}\label{eg.1}
p\circ \kappa _T=\widetilde{p}.
\end{equation}
\begin{theo}\label{eg1}
In the above situation, let $u\in {\cal D}'(X)$ be independent of $h$
and assume that $\widetilde{P}u$ is analytic on $X$. Then
$\mathrm{WF}_a(u)\subset \widetilde{p}^{-1}(0)$.
\end{theo}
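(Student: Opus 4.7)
The plan is to take a point $(y_0,\eta_0)\in T^*X\setminus 0$ with $\widetilde{p}(y_0,\eta_0)\neq 0$ and show $(y_0,\eta_0)\not\in \mathrm{WF}_a(u)$ by reducing, via Egorov and an elliptic parametrix, to the observation that analyticity of $\widetilde P u$ kills $T\widetilde P u$ in the germ space $H_{\Phi,x_0}$.

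First, I would choose an FBI-transform $T$ of the form (\ref{fbi.1.5}) adapted to $(y_0,\eta_0)$ so that $\kappa_T(y_0,\eta_0)=(x_0,\xi_0)$ with $\xi_0=(2/i)\partial_x\Phi(x_0)$. By the Egorov theorem established at the start of this section there is an $h$-pseudodifferential operator $P=\mathrm{Op}_h^w(p)$ with classical analytic symbol acting on $H_{\Phi,x_0}$ such that
$$
P\,Tu \equiv h^m\,T\widetilde{P}u\quad\text{in }H_{\Phi,x_0},
$$
and whose principal symbol obeys (\ref{eg.1}), in particular $p(x_0,\xi_0)=\widetilde p(y_0,\eta_0)\neq 0$. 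Hence $p$ is elliptic at $(x_0,\xi_0)\in\Lambda_\Phi$.

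Next, the hypothesis that $\widetilde{P}u$ is analytic on $X$ combined with Proposition \ref{fbi5} gives $\mathrm{WF}_a(\widetilde{P}u)=\emptyset$, so by Definition \ref{fbi3} applied to $\widetilde Pu$ at every point, $T\widetilde{P}u\equiv 0$ in $H_{\Phi,x_0}$. The Egorov identity then reads $PTu\equiv 0$ in $H_{\Phi,x_0}$. To finish, I would invoke Theorem \ref{cas3} (Boutet de Monvel--Kr\'ee) to produce a classical analytic symbol $q$ on a neighborhood of $(x_0,\xi_0)$ with $q\#p=1$; setting $Q=\mathrm{Op}_h^w(q)$ and using the composition calculus of Theorem \ref{pf2}, one has $Q\circ P=I$ modulo equivalence on $H_{\Phi,x_0}$. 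Applying $Q$ to $PTu\equiv 0$ yields $Tu\equiv 0$ in $H_{\Phi,x_0}$, which by Definition \ref{fbi3} is precisely the statement that $(y_0,\eta_0)\notin\mathrm{WF}_a(u)$.

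The main obstacle I anticipate is not conceptual but bookkeeping: one must justify the Egorov identity $PTu\equiv Th^m\widetilde Pu$ up to equivalence in $H_{\Phi,x_0}$ (which uses that $STu\equiv u$ microlocally near $(y_0,\eta_0)$ together with the fact that the differential operator $\widetilde P$ cannot enlarge the analytic wavefront set), and track that the exponentially small errors arising from the Egorov reduction, from representing $T\widetilde Pu$ by $0$ via analyticity, and from the parametrix composition all lie in the ambient equivalence class of $H_{\Phi,x_0}$ and so are preserved when $Q$ is applied. Apart from this, every ingredient is already in place in the preceding sections.
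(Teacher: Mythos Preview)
Your proposal is correct and follows essentially the same route as the paper: pick $(y_0,\eta_0)$ with $\widetilde p(y_0,\eta_0)\neq 0$, use Egorov to get $PTu\equiv 0$ in $H_{\Phi,x_0}$ with $p$ elliptic at $(x_0,\xi_0)$, then apply a Boutet de Monvel--Kr\'ee parametrix $Q$ with $Q\#P=1$ to conclude $Tu\equiv 0$. The only difference is that the paper works directly from the weaker hypothesis $(y_0,\eta_0)\notin\mathrm{WF}_a(\widetilde Pu)$ (bypassing your detour through Proposition~\ref{fbi5}) and thereby records the sharper inclusion $\mathrm{WF}_a(u)\subset\mathrm{WF}_a(\widetilde Pu)\cup\widetilde p^{-1}(0)$.
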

\begin{proof} Let $(y_0,\eta _0)\in T^*X\setminus 0$ be a
point where $\widetilde{p}(y_0,\eta _0)\ne 0$ and assume that
$(y_0,\eta _0)\not\in \mathrm{WF}_a(\widetilde{P}u)$ (which is a weaker assumption than in the
theorem). We choose $T$ adapted to the point $(y_0,\eta _0)$. Then
$$
PTu=0\hbox{ in }H_{\Phi ,x_0}\hbox{ and }p(x_0,\frac{2}{i}\partial
_x\Phi (x_0))\ne 0.
$$
Let $Q(x,\xi ;h)$ be a classical analytic symbol $Q\sim \sum_0^\infty
h^k q_k(x,\xi )$ such that
$$
Q\# P=1\hbox{ near }(x_0,\xi _0).
$$
Correspondingly, we have $Q(x,hD;h):H_{\Phi ,x_0}\to H_{\Phi ,x_0} $
so that
$$
Q(x,hD;h)\circ P(x,hD;h)=1: H_{\Phi ,x_0}\to H_{\Phi,x_0}.
$$
Apply this to $Tu$:
$$
Tu=QPTu=0\hbox{ in }H_{\Phi ,x_0}.
$$
Hence $(y_0,\eta _0)\not\in \mathrm{WF}_a(u)$. We have thus shown that
$\mathrm{WF}_a(u)\subset \mathrm{WF}_a(\widetilde{P}u)\cup
\widetilde{p}^{-1}(0)$ which is a stronger statement than in the
theorem.\end{proof}

\medskip
\noindent
For the notes of a course of more than 3 hours, it would here be the natural place to discuss the method of non-characteristic deformations and the
Kawai-Kashiwara theorem about propagation of analytic regularity for micro-hyperbolic operators. See \cite{Sj82}, Chapter 10.

\section{Analytic WKB and quasi-modes}
\setcounter{equation}{0}
\medskip
\noindent
Let $P(x,hD;h)$ be a classical analytic pseudodifferential operator of order $0$, defined
near $(0,\xi _0)\in {\bf C}^{2n}$, such that the leading symbol satisfies
$$
p(0,\xi _0)=0,\ \partial _{\xi _n}p(0,\xi _0)\ne 0.
$$
Let $\phi \in \mathrm{Hol\,}(\mathrm{neigh\,}(0,{\bf C}^n))$ solve the
eikonal problem
\begin{equation}\label{wkb.1}
p(x,\phi '(x))=0,\ \phi '(0)=\xi _0.
\end{equation}
Let $H$ be the hypersurface $x_n=0$. We use the standard notation
$x=(x',x_n)\in {\bf C}^n$.
\begin{theo}\label{wkb1}
Let $v(x;h)$, $w(x';h)$ be classical analytic symbols of order $0$
defined near $0$ in ${\bf C}^n$ and ${\bf C}^{n-1}$ respectively. Then
there exists a classical analytic symbol $u(x;h)$ defined near $0\in {\bf C}^n$ such that
\begin{equation}\label{wkb.2}
e^{-i\phi (x)/h}\circ P\circ e^{i\phi /h}u=hv,\ {{u}_\vert}_{H}=w.
\end{equation}
\end{theo}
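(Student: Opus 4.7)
The strategy is to conjugate $P$ by $e^{i\phi /h}$, reduce (\ref{wkb.2}) to a transport cascade for the coefficients of $u$ in its $h$-expansion, solve each equation by the holomorphic Cauchy-Kowalevski theorem, and finally invoke the quasi-norm framework of Section \ref{cas} to show that the resulting formal series is indeed a classical analytic symbol on a fixed neighborhood of $0$.

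First I would use the symbol calculus from Section \ref{cas} to identify
$$
L:=e^{-i\phi /h}\circ P\circ e^{i\phi /h}
$$
with a classical analytic pseudodifferential operator whose leading symbol, after stationary phase around $\xi =\phi '(x)$, is $p_0(x,\phi '(x))$. By the eikonal equation (\ref{wkb.1}) this symbol vanishes identically near $0$, so $L=hQ$ with $Q=\sum_{k\ge 0}h^kQ_k$ a classical analytic pseudodifferential operator, and an explicit computation yields
$$
Q_0=\frac{1}{i}\sum_{j=1}^n(\partial _{\xi _j}p_0)(x,\phi '(x))\partial _{x_j}+c(x),
$$
for some $c\in \mathrm{Hol\,}(\mathrm{neigh\,}(0))$. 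The hypothesis $\partial _{\xi _n}p_0(0,\xi _0)\ne 0$ ensures that the coefficient of $\partial _{x_n}$ in $Q_0$ is nonzero at the origin, so $H=\{ x_n=0 \}$ is non-characteristic for the analytic vector field $Q_0$, and (\ref{wkb.2}) is equivalent to $Qu=v$, $u|_H=w$. Writing $u\sim \sum h^k u_k$, $v=\sum h^kv_k$, $w=\sum h^kw_k$ and matching powers of $h$, I obtain the cascade
$$
Q_0u_k=v_k-\sum_{j=1}^k Q_j u_{k-j},\qquad u_k|_H=w_k,\quad k=0,1,2,\dots ,
$$
where each right-hand side is holomorphic near $0$ and involves only previously constructed $u_0,\ldots ,u_{k-1}$. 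Since $Q_0$ is a first-order analytic differential operator non-characteristic on $H$, the holomorphic Cauchy-Kowalevski theorem produces a unique holomorphic solution $u_k$ in some neighborhood of $0$, giving the formal symbol $u$.

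The main obstacle is to establish the analytic-symbol growth bound $|u_k|\le C^{k+1}k^k$ on a \emph{common} neighborhood of $0$. I would work in the nested scale $(\Omega _t)_{0\le t\le t_0}$ of Section \ref{cas} and track the quasi-norms $f_k$ along the induction. The quasi-norms of $Q=\sum h^kQ_k$ are finite, i.e.\ (\ref{cas.4}) holds for each $Q_k$, and the Cauchy-Kowalevski solution operator $Q_0^{-1}$---integration along the holomorphic characteristics of $Q_0$ followed by imposing the datum on $H$---satisfies a Cauchy-inequality type bound on the scale, losing only a factor $1/(t-s)$ per step. Combining Lemmas \ref{cas1}--\ref{cas2} with this estimate yields a recursive inequality of the schematic form
$$
\|u\|_\rho \le \|Q_0^{-1}\|_\rho \bigl(\|v\|_\rho +\|w\|_\rho +\|hR\|_\rho \|u\|_\rho \bigr),\qquad R:=\sum_{k\ge 1}h^{k-1}Q_k.
$$
For $\rho >0$ small enough, $\|hR\|_\rho $ is small and a geometric-series argument, in the spirit of the proof of Theorem \ref{cas3}, gives $\|u\|_\rho <\infty $; the estimate (\ref{cas.5}) then supplies the required classical analytic symbol bound for $u$ on $\Omega _{t_0/2}$. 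The hardest step is precisely this quantitative bookkeeping: coupling the $k^k$ growth from the pseudodifferential expansion of $Q$ with that coming from iterated inversion of $Q_0$ in the nested scale, and incorporating the Cauchy data $w_k$---measured in a suitable quasi-norm on the hypersurface $H$---into the same recursion.
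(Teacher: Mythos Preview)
Your overall plan is correct and matches the paper's approach: conjugate, reduce to a transport equation, solve the cascade, and close via quasi-norms. However, there is one concrete error in your description of the key estimate that would prevent the argument from closing as written.

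You say that the Cauchy--Kowalevski solution operator $Q_0^{-1}$ ``loses only a factor $1/(t-s)$ per step''. This is backwards: $Q_0^{-1}$ is an \emph{integration} along characteristics, not a differentiation, and the crucial point is that it \emph{gains} regularity in the scale. Concretely, if you normalize so that $Q_0=\partial_{x_n}$ and take the specific nested family
\[
\Omega_t=\Bigl\{\,\frac{|x'|}{R-\frac{Rt}{r}}+\frac{|x_n|}{r-t}<1\,\Bigr\},
\]
then $\sup_{\Omega_t}|a|\le Ct^{-k}$ implies $\sup_{\Omega_t}|\partial_{x_n}^{-1}a|\le C\,((k-1)t^{k-1})^{-1}$, i.e.\ one power of $t$ is \emph{recovered}. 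At the level of quasi-norms this translates into $\|(h\partial_{x_n})^{-1}a\|_\rho\le (2e/\rho)\|a\|_\rho$, and since the remainder $A$ starts at order $h^2$ one has $\|A\|_\rho=\mathcal O(\rho^2)$, so the composite $(h\partial_{x_n})^{-1}A$ has quasi-norm $\mathcal O(\rho)$---this is what yields the contraction. If $Q_0^{-1}$ genuinely \emph{lost} $1/(t-s)$, it would behave like a derivative in the scale, $\|Q_0^{-1}hR\|_\rho$ would be $\mathcal O(1)$ rather than $\mathcal O(\rho)$, and the Neumann series would not converge.

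The paper handles this by first reducing explicitly: it sets $w=0$, absorbs $\phi$ into the symbol so that $\phi=0$ and $p(x,0)=0$, then changes variables to get $p(x,\xi)=i\xi_n+\mathcal O(\xi^2)$, and finally conjugates away the zeroth-order term so that $Q_0$ becomes exactly $\partial_{x_n}$. The equation is then rewritten as $u+(h\partial_{x_n})^{-1}Au=\widetilde v$ with $A=\sum_{k\ge 2}h^kA_k$, and the gain estimate for $\partial_{x_n}^{-1}$ on the explicit domains $\Omega_t$ above closes the argument immediately. Your more abstract route (keeping a general $Q_0$ and general $\Omega_t$ from Section~\ref{cas}) can be made to work, but you would need to straighten the characteristics anyway to see the gain, and you must choose the nested domains adapted to the $x_n$-direction---the generic scale of Section~\ref{cas} is not enough.
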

\begin{proof} We may assume that $w=0$. Also $e^{-i\phi
  (x)/h}\circ P\circ e^{i\phi /h}$ is a classical analytic pseudodifferential operator of
order $0$ with leading symbol $p(x,\phi '_x(x)+\xi )$, so we may
assume that $\phi =0$, $p(x,0)=0$. After a change of variables, which
does not modify $H$, we may also assume that $\partial _{\xi
  '}p(x,0)=0$, $\partial _{\xi _n}p=i$, or in other words, $p(x,\xi
)=i\xi _n+{\cal O}(\xi ^2)$.

\medskip
\noindent
Writing $P=\sum_0^\infty h^kp_k(x,\xi )$, $p_0=p$, the first
equation in (\ref{wkb.2}) becomes
\begin{equation}\begin{split}\label{wkb.3}&\partial
    _{x_n}u+p_1(x,0)u(x;h)+\frac{1}{h}Au=v\\
&A=\sum_{k+|\alpha |\ge 2}\frac{h^k}{\alpha !}(\partial _\xi ^\alpha
p_k)(x,0)(hD_x)^\alpha =\sum_{k=2}^\infty h^kA_k,
\end{split}\end{equation}
where $A$ has the same general properties as in Section \ref{cas}. Assume for
simplicity that $p_1(x,0)=0$ (which otherwise can be achieved by conjugation).

\medskip
\noindent
Let $\Omega =\{ x\in {\bf C}^n;\, \frac{|x'|}{R}+\frac{|x_n|}{r}<1
\}$, where $R,r>0$ are small enough so that we stay in the domains of
definition of the various symbols and operators. For $0\le t\le r$, we
define $\Omega _t\subset {\bf C}^n$ by
$$
\frac{|x'|}{R-\frac{Rt}{r}}+\frac{|x_n|}{r-t}<1.
$$
Let $a\in \mathrm{Hol\,}(\Omega _0)$ have the property that for some
$k>1$:
$$
\sup_{\Omega _t}|a|\le C(a,k)t^{-k},\ 0<t\le r.
$$
Put
$$
\partial _{x_n}^{-1}a(x)=\int_0^{x_n}a(x',y_n)dy_n.
$$
Then
$$
\sup_{\Omega _t}|a|\le C(a,k)\int_t^{+\infty }s^{-k}ds = \frac{C(a,k)}{(k-1)t^{k-1}}.
$$

\medskip
\noindent
Let $a=\sum_2^\infty a_kh^k$ be a classical analytic symbol of order $-2$ such
that
\begin{equation}\label{wkb.4}
\sup_{\Omega _t}|a_k|\le \frac{f(a,k)k^k}{t^k},\ 0<t\le r,
\end{equation}
where $k\mapsto f(a,k)$ grows at most exponentially. Then,
$$b:=(h\partial _{x_n})^{-1}a=\sum_1^\infty b_kh^k,\ b_k=\partial
_{x_n}^{-1}a_{k+1},$$
$$
\sup_{\Omega _t}|b_k|\le \frac{f(a,k+1)(k+1)^{k+1}}{kt^k}\le 2ef(a,k+1)\frac{k^k}{t^k}.
$$
Hence, $f(b,k)\le 2e f(a,k+1)$, when defining $f(b,k)$ as in
(\ref{wkb.4}).

\medskip
\noindent
Put
$$
\| a\|_\rho=\sum_2^\infty f(a,k)\rho ^k,\  \| b\|_\rho=\sum_1^\infty
f(b,k)\rho ^k .
$$
Then
\begin{equation}\label{wkb.5}
\| b\|_\rho \le \frac{2e}{\rho }\| a\|_\rho .
\end{equation}
  The problem (\ref{wkb.2}), (\ref{wkb.3}), with $w=0$ and
  $p_1(x,0)=0$, can be written
\begin{equation}\label{wkb.6}
u+(h\partial _{x_n})^{-1}Au=h(h\partial _{x_n})^{-1}v=:\widetilde{v},
\end{equation}
where $\widetilde{v}$ is a classical analytic symbol of order
$0$. Defining $\| A\|_\rho $ as in Section \ref{cas} with respect to the
family $\Omega _t$, we have
$$
\| Au\|_\rho \le \| A \|_\rho \| u\|_\rho \le {\cal O}(\rho
^2)\| u\|_\rho ,
$$
when $\rho $ is small enough. Hence by (\ref{wkb.5}),
$$
\| (h\partial _{x_n})^{-1}Au\|_\rho \le {\cal O}(1)\rho \| u\|_\rho .
$$
We then see from (\ref{wkb.6}) that $\| u\|_\rho <\infty $ when $\rho
>0$ is small enough and we conclude that $u$ is an analytic symbol in
$\Omega _0$.\end{proof}

\medskip
\noindent
We next discuss {\it quasimodes for non-self-adjoint differential operators} in the semi-classical limit. Let
$$
P=P(x,hD_x;h)=\sum_{|\alpha |\le m}a_\alpha (x;h)(hD_x)^\alpha
$$
be a semi-classical differential operator defined on an open set
$\Omega \subset {\bf R}^n$. Assume that
\begin{equation}\label{wkb.7}
a_\alpha (x;h)\sim \sum_0^\infty a_\alpha ^k(x)h^k
\end{equation}
are (realizations of) classical analytic symbols. The semi-classical
principal symbol of $P$ is then
\begin{equation}\label{wkb.8}
p(x,\xi )=\sum_{|\alpha |\le m}a_\alpha (x)\xi ^\alpha .
\end{equation}
Let $(x_0,\xi _0)\in T^*\Omega $ be a point where
\begin{equation}\label{wkb.9}p(x_0,\xi _0)=0,\ \
  \frac{1}{2i}\{p,\overline{p} \} (x_0,\xi _0)>0.\end{equation}
Here, $\{ a,b \} =a_\xi '\cdot b'_x-a'_x\cdot b'_\xi $ denotes the
Poisson bracket of two sufficiently smooth functions $a(x,\xi )$,
$b(x,\xi )$. The following result, in a different non-semi-classical
formulation is due to H\"ormander \cite{Ho60a, Ho60b} in the smooth
setting and goes back to Sato-Kawai-Kashiwara \cite{SaKaKa71} in the analytic case. See \cite{DeSjZw04} for references and direct
proofs in the semi-classical formalism.
\begin{theo}\label{wkb2}
There exist an analytic function $\phi (x)$ and a classical analytic
symbol $b(x;h)$ of order 0, defined in a neighborhood of $x_0$ such
that
\begin{equation}\label{wkb.10}
\phi (x_0)=0,\ \ \phi '(x_0)=\xi _0,
\end{equation}
\begin{equation}\label{wkb.11}
p(x,\phi '(x))=0,\ x\in \mathrm{neigh\,}(x_0,\Omega ),
\end{equation}
\begin{equation}\label{wkb.12}
\Im \phi ''(x_0)>0,
\end{equation}
\begin{equation}\label{wkb.13}
P(\chi (x)b(x;h)e^{i\phi (x)/h})={\cal O}(1)e^{-\frac{1}{Ch}},\
C=C_\chi >0,
\end{equation}
if $\chi \in C_0^\infty (\mathrm{neigh\,}(x_0,\Omega ))$ is equal to
$1$ near $x_0$ and has its support sufficiently close to $x_0$,
\begin{equation}\label{wkb.14}
\| \chi b e^{i\phi /h}\|_{L^2}=h^{n/4}(1+{\cal O}(e^{-1/(Ch)})).
\end{equation}
\end{theo}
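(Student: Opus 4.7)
The strategy has three parts: (i) solve the eikonal $p(x,\phi')=0$ with $\Im\phi''(x_0)>0$, using the hypothesis $\frac{1}{2i}\{p,\bar p\}(x_0,\xi_0)>0$; (ii) construct an amplitude $b(x;h)$ as a classical analytic symbol via the analytic WKB theorem (Theorem \ref{wkb1}); (iii) localize by $\chi$ and verify (\ref{wkb.13}) and (\ref{wkb.14}), the latter via the analytic Laplace method.

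\textbf{Step (i).} The positivity implies $\nabla_\xi p(x_0,\xi_0)\neq 0$, so after a real linear canonical change of variables I may assume $\partial_{\xi_n}p(x_0,\xi_0)\neq 0$. Solving $p=0$ holomorphically as $\xi_n=-\lambda(x,\xi')$, the eikonal reduces to the non-characteristic Hamilton--Jacobi equation
\[
\partial_{x_n}\phi + \lambda(x,\partial_{x'}\phi) = 0,\qquad \phi|_{x_n=0} = \phi_0(x'),
\]
with Cauchy datum a holomorphic quadratic polynomial $\phi_0$ satisfying $\phi_0(x_0')=0$, $\phi_0'(x_0')=\xi_0'$, and $\Im\phi_0''(x_0')$ sufficiently positive definite. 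Holomorphic Cauchy--Kovalevskaya then produces $\phi$ near $x_0$. Geometrically this realizes a complex Lagrangian $\Lambda_\phi\subset p^{-1}(0)\subset\comp^{2n}$ through $(x_0,\xi_0)$, and the sign condition $\frac{1}{2i}\{p,\bar p\}(x_0,\xi_0)>0$ on the characteristic hypersurface is exactly the Levi-type positivity ensuring that the Hessian induced in the transverse $x_n$-direction remains positive once $\Im\phi_0''(x_0')$ is chosen large enough, making $\Lambda_\phi$ a $\real^{2n}$-positive Lagrangian in the sense of Chapter 1. Proposition 1.2.5 then yields $\Im\phi''(x_0)>0$, establishing (\ref{wkb.10})--(\ref{wkb.12}).

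\textbf{Steps (ii)--(iii).} The conjugated operator $Q:=e^{-i\phi/h}Pe^{i\phi/h}$ is a classical analytic pseudodifferential operator whose leading symbol $p(x,\phi'(x))$ vanishes identically by (\ref{wkb.11}). Theorem \ref{wkb1}, applied with forcing $v=0$ and Cauchy data $w(x';h)$ a classical analytic symbol of order $0$ with $w(x_0';0)>0$ to be fixed below, produces a classical analytic symbol $b(x;h)$ with $Qb\equiv 0$ as an analytic symbol; realizing the series at $k\le 1/(C'h)$ gives $Qb=\mathcal{O}(e^{-1/(Ch)})$. Choose $\chi\in C_0^\infty$ equal to $1$ near $x_0$ and supported so close to $x_0$ that $\Im\phi(x)\ge c|x-x_0|^2$ on $\mathrm{supp}\,\chi$ (possible by (\ref{wkb.12})). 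Write
\[
P(\chi b e^{i\phi/h}) = \chi\, e^{i\phi/h} Qb + [P,\chi]\bigl(b e^{i\phi/h}\bigr).
\]
On $\mathrm{supp}\,\chi$ one has $|e^{i\phi/h}|\le 1$, so the first term is $\mathcal{O}(e^{-1/(Ch)})$; the coefficients of the differential operator $[P,\chi]$ are supported on $\{|x-x_0|\ge c_0>0\}$, where $\Im\phi\ge cc_0^2>0$, so the commutator contribution is bounded by $\mathcal{O}(e^{-cc_0^2/h})$. This gives (\ref{wkb.13}). For the norm,
\[
\|\chi b e^{i\phi/h}\|_{L^2}^2 = \int \chi^2 |b|^2 e^{-2\Im\phi/h}\, dx = h^{n/2} F(h) + \mathcal{O}(e^{-1/(Ch)})
\]
by the analytic Laplace method (Theorem \ref{stp4}) at the non-degenerate minimum $x_0$ of $\Im\phi$, with $F(h)$ a realized classical analytic symbol of order $0$ and $F(0)>0$ (proportional to $|w(x_0';0)|^2/\sqrt{\det\Im\phi''(x_0)}$). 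Dividing $b$ by the $x$-independent factor $F(h)^{1/2}$ (bounded above and below for small $h$, hence preserving (\ref{wkb.13})) yields (\ref{wkb.14}).

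\textbf{Main obstacle.} The delicate step is (i): translating the scalar positivity $\frac{1}{2i}\{p,\bar p\}(x_0,\xi_0)>0$ into strict positive definiteness of the \emph{full} complex Hessian $\Im\phi''(x_0)$, not merely its $x'x'$-block. The remaining transport, cutoff, and normalization arguments are essentially routine once this geometric input from Chapter 1 has been secured and Theorems \ref{wkb1} and \ref{stp4} are invoked.
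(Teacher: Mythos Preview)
Your overall plan matches the paper's: solve the eikonal with $\Im\phi''(x_0)>0$, apply Theorem~\ref{wkb1} for the amplitude, cut off, and normalize. Steps (ii)--(iii) are essentially the paper's argument, including the final rescaling; the paper adds only the remark (via the quasinorms of Section~\ref{cas}) that the normalizing factor $c(h)^{-1/2}$ is itself a classical analytic symbol, so the renormalized $b$ remains one.

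The gap is exactly where you flag it, in Step~(i). Your assertion that ``choosing $\Im\phi_0''(x_0')$ large enough'' forces $\Im\phi''(x_0)>0$ is not justified: the off-diagonal block $\Im\phi''_{x'x_n}$ and the scalar $\Im\phi''_{x_nx_n}$ are determined by the Hamilton--Jacobi equation and depend on $\phi_0''$ in a way you have not analyzed, so positivity of the Schur complement is not evident. The appeal to Proposition~1.2.5 is then circular: that proposition says $\Lambda_\phi$ is $\real^{2n}$-positive iff $\Im\phi''>0$, so it only helps once you have independently established positivity of $\Lambda_\phi$.

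The paper closes this gap by choosing a different transversal. Rather than prescribing Cauchy data on $\{x_n=0\}$, one views the sought Lagrangian $\Lambda\subset\Gamma=p^{-1}(0)$ as the $H_p$-flowout of a complex Lagrangian $\Lambda'$ inside the complexified real characteristic set $\Sigma^{\bf C}=\{p=0,\ p^*=0\}$, which is a complex symplectic submanifold of codimension~2 transversal to $H_p$ precisely because $\{p,\overline{p}\}\neq 0$. A general tangent vector to $\Lambda$ at $(x_0,\xi_0)$ is $t=t'+zH_p$ with $t'\in T_{(x_0,\xi_0)}\Lambda'$, $z\in{\bf C}$; since $t'$ is tangent to $\Sigma^{\bf C}$ and $(x_0,\xi_0)$ is real, one has $\sigma(t',H_p)=\sigma(t',\overline{H_p})=0$, so the cross terms vanish and
\[
\frac{1}{2i}\sigma(t,\overline{t})=\frac{1}{2i}\sigma(t',\overline{t'})+|z|^2\,\frac{1}{2i}\{p,\overline{p}\}(x_0,\xi_0).
\]
Thus $\Lambda$ is strictly positive iff $\Lambda'$ is, and \emph{any} strictly positive Lagrangian $\Lambda'\subset\Sigma^{\bf C}\simeq{\bf C}^{2(n-1)}$ will do---no largeness of the initial Hessian is required. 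With this decomposition in hand, H\"ormander's characterization (equivalently Proposition~1.2.5) gives $\Im\phi''(x_0)>0$, and the rest of your argument goes through.
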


\medskip
\noindent
As usual, it follows from the proof that the conclusion remains
uniformly valid if we replace $P$ by $P-z$ for $z\in
\mathrm{neigh\,}(0,{\bf C})$. More generally the conclusion is valid
for $P-z$ for $z\in \mathrm{neigh\,}(z_0,{\bf C})$, if we replace the
condition $p(x_0,\xi _0)=0$ by $p(x_0,\xi _0)=z_0$ in (\ref{wkb.9}).

\medskip
\noindent
When $P$ can be realized as a closed operator on $L^2(\Omega )$
or on $L^2(M)$ for some manifold containing $\Omega $, then we
conclude that $\| (P-z)^{-1}\| \ge e^{1/(Ch)}/C$ for some $C>0$ and
for $z\in \mathrm{neigh\,}(z_0,{\bf C})\setminus \sigma (P)$, where
$\sigma (P)$ denotes the spectrum of $P$. Notice that $i^{-1}\{
p,\overline{p} \}$ is the semi-classical principal symbol of the
commutator $h^{-1}[P,P^*]$, so $P$ is non-normal.

\medskip
\noindent
When $P$ is a fixed elliptic operator in the classical sense, with analytic $h$-indepen\-dent coefficients, the result with some
obvious modifications applies to $P-z$ when $z$ tends to infinity in a narrow sector.

\medskip
\noindent
We refer to \cite{DeSjZw04} for a fuller discussion of the spectral aspects.

\medskip
\noindent
{\bf Proof of Theorem \ref{wkb2}.} The assumption (\ref{wkb.9}) implies that
$p'_\xi (x_0,\xi _0)\ne 0$. The existence of analytic solutions to (\ref{wkb.10}), (\ref{wkb.11}) then follows from complex Hamilton-Jacobi theory
or simply from the Cauchy-Kowalevska theorem. More precisely, if $H$ is a complex hypersurface in $x$-space that passes through
$x_0$ transversally to $p'_\xi (x_0,\xi _0)\cdot \partial _x$ and $\psi$ is holomorphic on $\mathrm{neigh\,}(x_0,H)$ with
$d\psi ={{\xi_0\cdot dx }_\vert}_{H}$ at $x_0$, then (\ref{wkb.10}), (\ref{wkb.11}) has a solution $\phi $ such that
${{\phi}_\vert}_{H}=\psi $, unique near $x_0$.

\medskip
\noindent
For (\ref{wkb.12}) we recall a geometric characterization by
H\"ormander \cite{Ho71b}. Let $\Lambda _\phi $ be the complex
Lagrangian manifold defined near $(x_0,\xi _0)$ by $\xi =\phi '(x)$
where $\phi (x)$ is holomorphic near $x_0$ and $\phi '(x_0)=\xi_0$. Then,
\begin{itemize}
\item (\ref{wkb.12}) $\Longrightarrow$
\begin{equation}\label{wkb.15}
\frac{1}{i}\sigma (t,\overline{t})>0,\ \forall t\in T_{x_0,\xi
  _0}(\Lambda _\phi )\setminus \{ 0 \},
\end{equation}
where we view the symplectic form $\sigma $ as an alternate bilinear
form.
\item If $\Lambda $ is a complex Lagrangian manifold containing
  $(x_0,\xi _0)$ such that (\ref{wkb.15}) holds, then after
  restricting $\Lambda $ to a small neighborhood of $(x_0,\xi _0)$, we
  get $\Lambda =\Lambda _\phi $, where $\phi $ is holomorphic near
  $x_0$ and satisfies (\ref{wkb.10}), (\ref{wkb.12}).
\end{itemize}

\medskip
\noindent
The geometric formulation of the problem  (\ref{wkb.10})--(\ref{wkb.12}) is then to find a complex Lagrangian manifold
$\Lambda \subset \Gamma:=p^{-1}(0)$ which contains $(x_0,\xi _0)$ and is strictly positive in the sense of (\ref{wkb.15}). Notice that the strict positivity of
$\Lambda $ at $(x_0,\xi _0)$ implies that $\Lambda $ intersects $T^*\Omega $ transversally at $(x_0,\xi _0)$.

\medskip
\noindent
Here $\Gamma =p^{-1}(0)$ denotes the {\it complex} hypersurface and we recall that $H_p$ is tangent to $\Gamma $. We also know by
elementary symplectic geometry that $H_p$ is everywhere tangent to $\Lambda $.

\medskip
\noindent
Let $\Sigma =p^{-1}(0)\cap \mathrm{neigh\,}((x_0,\xi
_0),T^*\Omega )$ be the real characteristic manifold. It is symplectic
and of codimension 2. Let $\Sigma ^{\bf C}\subset
\mathrm{neigh\,}((x_0,\xi _0),{\bf C}^{2n})$ denote its
complexification. It is a complex symplectic manifold of codimension 2
in ${\bf C}^{2n}$, given by the equations $p(\rho )=0$, $p^*(\rho
)=0$, where $p^*(\rho )=\overline{p(\overline{\rho })}$. The assumption
(\ref{wkb.9}) implies that $\Sigma ^{\bf C}$ is a complex hypersurface
in $\Gamma $, given there by the equation $p^*(\rho )=0$, transversal
to $H_p$ since $H_pp^*=\{ p ,\overline{p} \}\ne 0$.

\medskip
\noindent
It is now clear that the complex Lagrangian manifolds $\Lambda$ with
$(x_0,\xi _0)\in \Lambda \subset \mathrm{neigh\,}((x_0,\xi_0),\Gamma )$ coincide near that point with the ones of the form
$$
\{ \exp (zH_p)(\rho ');\, \rho'\in \Lambda ',\ z\in D(0,\varepsilon )\},
$$
where $\varepsilon >0$ is small and $\Lambda '$ is a complex Lagrangian submanifold of
$\Sigma ^{\bf C}$ containing $(x_0,\xi _0)$. By the Darboux theorem, $\Sigma $, $\Sigma ^{\bf C}$ can locally be identified with
${\bf R}^{2(n-1)}$, ${\bf C}^{2(n-1)}$, and we see that $\Lambda $ is strictly positive at $(x_0,\xi _0)$ iff $\Lambda '$ is. Indeed, a general
$t\in T_{(x_0,\xi_0)}\Lambda $ is of the form $t=t'+zH_p(x_0,\xi _0)$, for $t'\in
T_{(x_0,\xi _0)}\Lambda '$, $z\in {\bf C}$ and since $\sigma
(t',H_p)=\sigma (t',\overline{H_p})=0$, we get
\[
\begin{split}
&\frac{1}{2i}\sigma (t,\overline{t})= \frac{1}{2i}\sigma
(t',\overline{t'})+ \frac{|z|^2}{2i}\sigma (H_p,\overline{H_p})\\
&=\frac{1}{2i}\sigma
(t',\overline{t'})+ \frac{|z|^2}{2i}\{p,\overline{p} \}\asymp
|t'|^2+|z|^2\asymp |t|^2. \end{split}
\]

\medskip
\noindent
Now there are plenty of strictly positive Lagrange manifolds $\Lambda '\subset \Sigma ^{\bf C}$ passing through $(x_0,\xi _0)$ and
hence there are plenty of strictly positive Lagrange manifolds $\Lambda \subset \Gamma $ containing that point. This means that
we have plenty of solutions to the problem (\ref{wkb.10})--(\ref{wkb.12}).

\medskip
\noindent
We choose one such solution $\phi (x)$ and apply Theorem \ref{wkb1} to conclude that there
exists an elliptic classical analytic symbol $b(x;h)\sim \sum_0^\infty
b_k(x)h^k$ such that formally,
$$
P(x,hD;h)(b(x;h)e^{i\phi (x)/h})=0,\ x\in \mathrm{neigh\,}(x_0,\Omega ).
$$
This means that (if $b$ also denotes a realization as in Theorem \ref{wkb2})
$$
P(x,hD_x;h)(be^{i\phi /h})={\cal O}(e^{-1/(Ch)})e^{i\phi /h}.
$$
From (\ref{wkb.12}) we see that $e^{i\phi (x)/h}$ is exponentially decaying on the real domain away from any fixed neighborhood of
$x_0$. Thus, if $\chi $ is a cutoff as in the statement of the theorem,
$$
P(\chi be^{i\phi /h})={\cal O}(e^{-1/(Ch)}).
$$
By analytic stationary phase,
$$
\| \chi be^{i\phi /h}\|^2_{L^2}=h^{\frac{n}{2}}c(h),
$$
where $c(h)\sim c_0+c_1h+...$ is a positive elliptic analytic symbol. Applying the quasinorms of Section \ref{cas} (that simplify a
lot since the family $\Omega _t$ is absent), we see that $c^{-1/2}$ is a classical analytic symbol. Replacing $b$ with $c^{-1/2}b$, we get
(\ref{wkb.13}), (\ref{wkb.14}).

\section{Propagation of regularity along a real bi\-cha\-ra\-cteris\-tic strip}
\label{pr}
\setcounter{equation}{0}
Let $P$ be a differential operator with analytic coefficients on an open set $X\subset {\bf R}^n$. Let $p$ be the principal symbol. The following theorem is
due to N.~Hanges \cite{Ha81}. It improves the classical propagation theorem of L.~H\"ormander \cite{Ho71c} and Sato, Kawai and Kashiwara
\cite{SaKaKa71} for operators of real principal type in that it only requires one real bicharacteristic strip. See also \cite{HaSj82}.
\begin{theo}
\label{pr1}
Assume that $H_p=p'_\xi \cdot \partial _x-p'_x\cdot \partial _\xi $
has a real integral curve $\gamma :[a,b]\to p^{-1}(0)\cap
T^*X\setminus 0$, $a<b$. If $u\in {\cal D}'(X)$,
$\mathrm{WF}_a(Pu)\cap \gamma ([a,b])=\emptyset $, then $\gamma
([a,b])$ is either contained in, or disjoint from $\mathrm{WF}_a(u)$.
\end{theo}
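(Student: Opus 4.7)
\medskip
\noindent
\textbf{Proof plan.} The overall strategy is a continuation argument: we show that the set
\[
I=\{t\in[a,b];\ \gamma(t)\notin\mathrm{WF}_a(u)\}
\]
is both open and closed in $[a,b]$. Openness is immediate since $\mathrm{WF}_a(u)$ is closed. For closedness it suffices, by a standard connectedness argument combined with backward propagation (obtained by applying the same argument to the transpose, which reverses the sign of $H_p$), to prove the following one-sided local statement: if $\gamma(t_*)\notin\mathrm{WF}_a(u)$ and $\mathrm{WF}_a(Pu)\cap\gamma([t_*,t_*+\delta])=\emptyset$, then $\gamma(t)\notin\mathrm{WF}_a(u)$ for all $t$ in some one-sided neighborhood $[t_*,t_*+\delta']$.

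\medskip
\noindent
Next, I would pass to the FBI transform side. Fix $t_*\in[a,b]$ and choose an FBI transform $T$ of the type (\ref{fbi.1.5}) adapted to $\rho_0:=\gamma(t_*)$, so that $\kappa_T(\rho_0)=(x_0,\xi_0)$ with $\xi_0=(2/i)\partial_x\Phi(x_0)$. The Egorov correspondence from Section 2.7 produces a classical analytic $h$-pseudodifferential operator $\widetilde P$ on $H_{\Phi,x_0}$ with leading symbol $\widetilde p=p\circ\kappa_T^{-1}$, and $\widetilde P\,Tu\equiv T(h^m Pu)$ in $H_\Phi$. The image curve $\widetilde\gamma(t):=\kappa_T(\gamma(t))\subset\Lambda_\Phi$ is a real integral curve of the Hamilton field $H_{\widetilde p}$ tangent to $\Lambda_\Phi$, and I set $\alpha(t)=\pi_x\widetilde\gamma(t)\subset\mathbf{C}^n$. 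The hypotheses become: $Tu$ is exponentially small in $H_\Phi$ near $\alpha(t_*)=x_0$, while $\widetilde P\,Tu$ is exponentially small near the whole curve $\alpha([t_*,t_*+\delta])$.

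\medskip
\noindent
The core of the proof is a weight deformation. For small $\epsilon,s>0$ I construct a nonnegative function $\psi_s\in C^{1,1}_0(\mathbf{C}^n)$ with $\|\nabla\psi_s\|_{L^\infty}$ and $\|\nabla^2\psi_s\|_{L^\infty}$ as small as required by Theorem 2.5.3, supported in a thin tube around $\alpha([t_*,t_*+s])$, vanishing at $\alpha(t_*)$ and strictly positive on $\alpha((t_*,t_*+s])$. Set $\Phi_s=\Phi-\epsilon\psi_s$. By Theorem 2.5.3, $\widetilde P$ is uniformly bounded $H_{\Phi_s}\to H_{\Phi_s}$, and by Proposition 2.5.4 its symbol there is $\widetilde p$ restricted to the IR-manifold $\Lambda_{\Phi_s}$. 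The key is to choose $\psi_s$ so that the pullback to $\Lambda_{\Phi_s}$ of the real part of $H_{\widetilde p}$ makes $\psi_s$ strictly increasing along the deformed lift of $\widetilde\gamma$. Combined with the quantization-multiplication formula applied to $(\widetilde P Tu,Tu)_{H_{\Phi_s}}$ and an integration by parts in the transport direction, this yields an a priori estimate of Carleman type: $Tu\equiv 0$ in $H_\Phi$ near $x_0$ and $\widetilde P Tu$ exponentially small near $\alpha([t_*,t_*+s])$ together force $Tu\equiv 0$ in $H_{\Phi_s}$ near $\alpha([t_*,t_*+s])$. Since $\Phi_s<\Phi$ strictly on $\alpha((t_*,t_*+s])$, the equivalence class of $Tu$ in $H_\Phi$ vanishes along the projected curve, which, by Definition 2.6.3 and the $T$-independence of $\mathrm{WF}_a$ from Proposition 2.6.4, gives $\gamma(t)\notin\mathrm{WF}_a(u)$ for $t\in(t_*,t_*+s]$.

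\medskip
\noindent
\emph{Main obstacle.} The principal difficulty is the construction of $\psi_s$ and the resulting Carleman-type estimate. On $\Lambda_\Phi$ the symbol $\widetilde p$ vanishes along $\widetilde\gamma$, so $\widetilde P$ is not elliptic on the reference manifold and we cannot simply invert it as in the proof of Theorem 2.7.1. Instead, the deformation $\Lambda_{\Phi_s}$ must be engineered so that (i) the deformation direction is transverse to $H_{\widetilde p}$, making $\widetilde p|_{\Lambda_{\Phi_s}}$ nonzero away from the lifted curve, and (ii) along the lifted curve the sign of the Poisson bracket $\{\widetilde p,\psi_s\circ\pi_x\}|_{\Lambda_{\Phi_s}}$ (equivalently, the derivative of $\psi_s$ along the projected bicharacteristic) is correct so that Sj\"ostrand's maximum-principle mechanism of \cite{Sj82}, Chapter 9, applies. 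The reality of the bicharacteristic is exactly what permits such a choice: if $\widetilde\gamma$ were not real, it would leave $\Lambda_\Phi$ under the real flow and no single-valued weight deformation could track it. Working out the one-sidedness carefully yields forward propagation along $\gamma$, and applying the same argument to the adjoint gives the backward direction, completing the proof.
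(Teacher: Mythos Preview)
Your strategy---Egorov plus a weight deformation $\Phi_s=\Phi-\epsilon\psi_s$ and a Carleman-type a priori estimate---is essentially the route of \cite{Sj82}, Chapter~9, and is a legitimate approach in principle. The paper, however, takes a quite different and more direct path. Rather than using a generic FBI transform followed by Egorov (which only matches the principal symbol and leaves a full analytic pseudodifferential operator $\widetilde P$ to deal with), the paper constructs a \emph{special} FBI transform $T$ whose phase solves $\partial_{x_n}\phi=p(y,-\partial_y\phi)$ and whose amplitude is obtained by the analytic WKB construction of Theorem~\ref{wkb1}, so that the intertwining is \emph{exact}: $hD_{x_n}Tu=Th^mPu$ in $H_{\Phi,x_0}$ (Theorem~\ref{pr2}). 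On the FBI side the projected bicharacteristic then becomes a segment in the $x_n$-direction along which $\Phi$ depends only on $x'$; the hypothesis on $Pu$ becomes $\partial_{x_n}Tu=0$ in $H_\Phi$ near that segment, and a one-line integration shows that $Tu$ is, up to an exponentially small error, a function of $x'$ alone. Vanishing of $Tu$ in $H_\Phi$ at one point of the segment therefore propagates to all points, with no weight deformation, no a priori estimate, and no separate forward/backward argument.

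What your route buys is robustness: the deformation mechanism extends to settings where no exact reduction to $hD_{x_n}$ exists. What the paper's route buys is that the core of the proof collapses to integrating an ODE. Two concrete gaps in your sketch: the results you invoke as ``Theorem 2.5.3'' and ``Proposition 2.5.4'' are presumably Theorem~1.4.3 and Proposition~1.4.4, which are stated for the global quadratic-weight spaces of Chapter~1 and do not apply as written to the local germs $H_{\Phi,x_0}$ of Chapter~2; and the Carleman-type estimate---which is where all the work lies in your approach---is asserted but not derived, and cannot be obtained from the quantization--multiplication formula alone since $\widetilde p$ vanishes on $\Lambda_\Phi$ along the whole curve.
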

The proof uses a WKB-construction and the variant we give here is slightly different from the one in Chapter 9 in \cite{Sj82}.

\medskip
\noindent
If $dp$ vanishes at some point of $\gamma $, then $\gamma$ is reduced to a point and the statement in the theorem becomes
trivial. Hence, we may assume that $dp\ne 0$ along $\gamma $.
\begin{theo}
\label{pr2}
Assume that $p(y_0,\eta _0)=0$, $dp(y_0,\eta _0)\ne 0$. Then we can
find an FBI-transform $T$ defined near $(y_0,\eta _0)$ such that
$hD_{x_n}Tu=Th^mPu$ in $H_{\Phi ,x_0}$, for $u\in {\cal D}'(X)$
independent of $h$.
\end{theo}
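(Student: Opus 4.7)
The plan is to apply Egorov's theorem in reverse: construct $T$ so that the pseudodifferential operator intertwined with $h^mP$ is exactly $hD_{x_n}$. The scheme has three parts: choose the canonical transformation, realize it as the one associated to an FBI-transform, and eliminate subprincipal corrections.

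\textbf{Step 1 (canonical transformation).} Extend $p$ holomorphically to a neighborhood of $(y_0,\eta_0)\in\comp^{2n}$. Since $p(y_0,\eta_0)=0$ and $dp(y_0,\eta_0)\ne 0$, the complex Darboux theorem produces a local holomorphic canonical transformation $\kappa$ near $(y_0,\eta_0)$ sending it to a point $(x_0,\xi_0)$ with $\xi_{0,n}=0$, and satisfying $\xi_n\circ \kappa=p$. Concretely, take $\xi_n=p$ as one new coordinate, complete it to a Poisson-commuting system $\xi_1,\dots,\xi_n$ with independent differentials at the point, and construct conjugate coordinates $x_1,\dots,x_n$ by the standard procedure. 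There remains substantial freedom in this construction, corresponding to a choice of canonical coordinates transverse to the Hamilton field $H_p$.

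\textbf{Step 2 (realization as an FBI-transform).} We must arrange that $\kappa(\real^{2n})=\Lambda_\Phi$ for some strictly pluri-subharmonic quadratic leading part $\Phi$, and that $\kappa=\kappa_T$ comes from a phase $\phi(x,y)$ verifying (\ref{fbi.1}). Using the freedom left in Step 1, compose the Darboux straightening with an auxiliary standard Bargmann-type transform in the transverse variables $(y',\eta')$ to inject the required positivity: the Gaussian phase contributes to the imaginary part of $\phi''_{yy}$, while $\det\phi''_{xy}\ne 0$ follows from non-degeneracy of the composed canonical transformation. By Proposition \ref{fbi2} the resulting map $\kappa_T$ sends $\real^{2n}$ bijectively onto a strictly positive $\Lambda_\Phi$, and by construction $\xi_n\circ\kappa_T=p$.

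\textbf{Step 3 (Egorov and transport equation).} By the Egorov theorem preceding (\ref{eg.1}), there is a classical analytic pseudodifferential operator $A=a(x,hD_x;h)$ on $H_{\Phi,x_0}$ with $A\,Tu=Th^mPu$, and its leading symbol satisfies $a_0=p\circ\kappa_T^{-1}=\xi_n$. Thus $A=hD_{x_n}+hR$ with $R$ a classical analytic pseudodifferential operator. To replace $A$ by $hD_{x_n}$ itself, look for an elliptic classical analytic symbol $b(x;h)\sim\sum_k b_k(x)h^k$ with $b_0\equiv 1$ such that $A\#b=b\# \xi_n$; this is equivalent to the scalar transport equation
\[
hD_{x_n}b=-hR\,b,\qquad b|_{x_n=0}=1,
\]
which is solvable in the class of classical analytic symbols by exactly the Cauchy-problem argument of Theorem \ref{wkb1}. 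Setting $\widetilde T:=b\cdot T$ (a new FBI-transform with the same canonical transformation) gives $hD_{x_n}\widetilde Tu=\widetilde T h^mPu$, as required.

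\textbf{Main obstacle.} The delicate step is Step 2. Complex Darboux straightens $p$, but nothing in the statement guarantees that the image of $\real^{2n}$ under the straightening transformation is I-Lagrangian, R-symplectic, and \emph{strictly positive}, nor that it arises from a phase function of the form required by (\ref{fbi.1}). One must carefully spend the freedom in the choice of transverse canonical coordinates, and a Bargmann-type composition, to ensure all three conditions simultaneously. Once positivity is secured, Egorov and the transport-equation cleanup are routine.
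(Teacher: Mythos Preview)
Your strategy differs from the paper's and contains a genuine gap in Step~3. The intertwining relation $A\# b=b\#\xi_n$ with $b=b(x;h)$ a \emph{multiplication} operator is in general impossible: expanding the composition, one needs $(hD_{x_n}b)+h\,r(x,hD_x;h)\circ b(x)=0$ as an \emph{operator} identity, and the second term is a genuine pseudodifferential operator whose symbol depends on $\xi$ unless $R$ happens to be a multiplication. The equation you wrote, $hD_{x_n}b=-hRb$, is not equivalent to $A\# b=b\#\xi_n$; it ignores all the terms $\tfrac{h^{|\alpha|}}{\alpha!}\partial_\xi^\alpha r\,D_x^\alpha b$ with $|\alpha|\ge 1$ in the composition formula of Theorem~\ref{pf2}. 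The repair is to let $b=b(x,\xi;h)$ be an elliptic classical analytic pseudodifferential symbol and set $\widetilde T=b(x,hD_x;h)\,T$; then $\widetilde T$ is still an FBI transform (same phase, modified amplitude via FIO composition), and the conjugation becomes a transport hierarchy in $x_n$ at the level of full symbols, which can indeed be solved. But as written, Step~3 does not go through.

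The paper avoids both your delicate Step~2 and this cleanup issue by constructing the phase and amplitude of $T$ directly. It solves the eikonal equation $\partial_{x_n}\phi=p(y,-\partial_y\phi)$ as a Cauchy problem from $x_n=0$, with explicit Bargmann-type initial data
\[
\phi(x',0,y)=\tfrac{i}{2}(x'-y')^2-\eta_{0,n}\,y_n+iC\,(y_n-y_{0,n})^2,\qquad \Re C>0,
\]
and $x_0=(y_0'-i\eta_0',0)$. The FBI conditions~(\ref{fbi.1}) are then checked by hand: $\Im\phi''_{yy}>0$ is immediate from the initial data, and the nonvanishing of $\det\phi''_{xy}$ reduces (after a coordinate choice making either $\partial_{\eta_n}p\ne 0$ or $\partial_\eta p=0,\ \partial_{y_n}p\ne 0$) to $\partial_{y_n}\bigl(p(y,-\partial_y\phi)\bigr)\ne 0$ at $(x_0,y_0)$, which can be arranged by choosing $C$. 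The amplitude $a(x,y;h)$ is determined by the full transport equation $(hD_{x_n}-h^mP^{\mathrm t}(y,D_y))(e^{i\phi/h}a)=0$ in both $x$ and $y$, handled by Theorem~\ref{wkb1}. This buys two things over your route: the canonical transformation, the strict positivity, and the FBI structure all come out of one concrete PDE construction (no abstract Darboux, no separate positivity argument to ``secure''), and the intertwining $hD_{x_n}T=Th^mP$ holds \emph{exactly} by design, with no post-hoc conjugation needed.
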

\begin{proof} We start with the phase.
\begin{lemma}\label{pr3}
There exists an FBI-phase $\phi (x,y)$, defined near $(x_0,y_0)$ such
that
\begin{equation}\label{pr.1}
\partial _{x_n}\phi =p(y,-\partial _y\phi (y)).
\end{equation}
\end{lemma}
\begin{proof} We put
$$
\phi (x',0,y)=\frac{i}{2}(x'-y')^2-\eta _{0,n}y_n+iC(y_n-y_{0})^2,
$$
and choose $x_0=(y_0'-i\eta _0',0)$. Here $C$ will be chosen with $\Re
C>0$. Then $\phi _y'((x_0',0),y_0)=-\eta _0$ and we let $\phi (x,y)$
be the corresponding local solution of (\ref{pr.1}).
Then $\phi $ fulfills the first two conditions in (\ref{fbi.1}). In
order to have $\det \phi ''_{xy}(x_0,y_0)\ne 0$, we may assume, after
a change of coordinates in $y$, that
$$
\partial _{\eta _n}p(y_0,\eta _0)\ne 0, \hbox{ or }[ \partial _\eta
p(y_0,\eta _0)=0 \hbox{ and }\partial _{y_n}p(y_0,\eta _0)\ne 0.]
$$
Then we can find $C$ with $\Re C>0$ such that
\begin{equation}\label{pr.2}
\partial _{y_n}(p(y,-\partial _y\phi ))\ne 0\hbox{ at }(x_0,y_0).
\end{equation}
Now the following statements are equivalent:
\begin{itemize}
\item $\det \phi ''_{xy}(x_0,y_0)\ne 0$,
\item $y\mapsto \partial _x\phi $ has bijective differential at
  $x=x_0$, $y=y_0$,
\item $y\mapsto (\partial _{x'}\phi ,p(y,-\partial _\eta \phi ))$ has bijective differential at
  $x=x_0$, $y=y_0$,
\item (\ref{pr.2}).
\end{itemize}
The last equivalence follows from
$$
\det \phi ''_{x',y'}\ne 0,\ \phi ''_{y_n,x'}=0\hbox{ at }(x_0,y_0).
$$
Thus $\phi $ is an FBI-phase.\end{proof}

\medskip
\noindent
We can now finish the proof of the last theorem. Take $\phi $ as in
the lemma. It suffices to choose $a$ in (\ref{fbi.1.5}) such that
$$
(hD_{x_n}-h^mP^{\mathrm{t}}(y,D_y))\left( e^{i\phi (x,y)/h} a(x,y;h)
\right) =0,
$$
which we can solve locally as in the preceding section with a
prescribed $a(x',0,y;h)$.\end{proof}

\medskip
\noindent
{\bf Proof of Hanges' theorem}: We may
decompose $[a,b]$ into finitely many short intervals, each being
covered by one FBI transform. Thus we may assume that $\gamma ([a,b])$ is
contained in a small neighborhood of $(y_0,\eta _0)$. Let $T$ be a
corresponding FBI transform as in the last theorem. Then $\kappa
_T\circ \gamma $ is an integral curve in $\Lambda _\Phi $ of $H_{\xi
  _n}=\partial _{x_n}$ on which $\xi _n$ vanishes. Assume for
simplicity that $x_0=0$. Then we know that
$$\frac{2}{i}\partial _x\Phi (0,t)=\xi _0=(\xi _0',0)$$
and consequently $\Phi (x)=-\Im (x'\cdot \xi _0')+{\cal O}(x'^2)$.

\medskip
\noindent
By the intertwining property and the fact that $\gamma ([a,b])$ is disjoint from $\mathrm{WF}_a(Pu)$, we know that
$$
hD_{x_n}Tu=0 \hbox{ in } H_\Phi (\mathrm{neigh}(\{0 \}\times [a,b],{\bf C}^n)),
$$
so by integration,
$$
Tu=v(x')+{\cal O}(e^{-{\rm Im}\, (x'\cdot \xi _0')/h-\epsilon /h}) \hbox{
  near }\{0 \}\times [a,b].
$$
Consequently, if $Tu=0$ in $H_{\Phi ,\gamma (t)}$ for some $t\in
[a,b]$ we have the same fact for all $t\in [a,b]$. In other words, if
$\gamma (t)\not\in \mathrm{WF}_a(u)$ for some $t\in [a,b]$, the same
must hold for all $t\in [a,b]$.

\section{Related results and developments}
\label{rel}
\setcounter{equation}{0}
The work \cite{Sj82} was the natural continuation of a series of works
on the propagation of singularities for solutions of boundary value
problems of order 2 and higher in the analytic category, \cite{Sj80a,
  Sj80b, Sj80c, Sj81a, Sj81, RaSj81} In the case of second order
operators, the main result here is that the analytic wavefront set for
solutions to homogenous problems is a union of maximally extended
analytic rays (and a more general microhyperbolic propagation theorem
for operators of higher order). This is analogous to the corresponding
result in the $C^\infty $ by M.~Taylor, R.~Melrose, G.~Eskin,
V.~Ivrii, culminating in \cite{MeSj78, MeSj82}, stating that the
ordinary $C^\infty $ wavefront set of solutions to the homogeneous
problem is a union of maximally extended $C^\infty $-rays. Such rays
have (with the exception of some slightly pathological cases) unique
extensions while analytic rays have non-unique extensions from points
where they are tangential to the boundary and the domain is concave in
the ray direction so that the complement, that we may call ``the
obstacle'', is convex in the same direction. Roughly, analytic rays
may glide along the boundary into the $C^\infty $ shadow region.

\medskip
\noindent
The methods used another kind of FBI-transforms, closely related to Gaussian resolutions of the identity. In \cite{Sj82} such resolutions
still play a role, while in the present text, we have eliminated them completely. It would have been nice if there had been time and energy
to revisit the boundary propagation in \cite{Sj82} with the improved methods there.

\medskip
\noindent
G.~Lebeau \cite{Leb84} explored the propagation of singularities for the wave equation outside a strictly convex obstacle in the whole
scale of Gevrey spaces $G^s$ that interpolate between the smooth and the analytic functions and found that the essential difference between
the two types of propagations appears at the value $s=3$. See also \cite{LaLa91}.

\medskip
\noindent
A related area is that of analytic hypoellipticity for non-elliptic operators. Here F.~Treves \cite{Tr78} and later
D.~Tartakoff \cite{Ta80} established analytic hypoellipticity for operators of the type $\Box_b$ that degenerate to order 2 on a
symplectic submanifold of the real cotangent space. The approach of Treves is based on a full fledged
machinery of analytic pseudodifferential operators and reductions to model-like cases while the one of Tartakoff is restricted to a more
special class of operators and uses very sophisticated iterated a priori-estimates to gain control of high order derivatives
directly. G.~M\'etivier \cite{Met81} in a still very long paper generalized the results to operators with multiple characteristics
following the general approach of Treves.

\medskip
\noindent
In \cite{Sj83} the second author gave a short proof of M\'etivier's result as well as some generalizations. We refer to \cite{GrScSj81,
  GrSj85} for some related results. The method of \cite{Sj83} is that of subelliptic deformations: After an FBI-transform we work in a space
$H_{\Phi _0}^\mathrm{loc}$ for some strictly plurisubharmonic weight $\Phi _0$ and the given subelliptic operator satisfies an
a priori-estimate in that space. We then look for a small deformation $\Phi \approx \Phi _0$ such that $P$ satisfies a nice a priori
estimate also in $H_\Phi ^\mathrm{loc}$ and such that $\Phi <\Phi _0$ where we want to obtain analytic regularity and $\Phi \ge \Phi _0$
near the boundary of a neighborhood of those points. A variant of the method used when we have micro-hyperbolicity, is to make deformations
such that the operator on the FBI-side is elliptic on $\Lambda _\Phi$, $\Phi >\Phi _0$ in a region where we want to gain analytic
regularity and such that on the boundary of a slightly larger region we have that $\Phi >\Phi _0$ only at points where already have
analytic regularity by assumption. The deformation of weights on the FBI-side corresponds to a local deformation $\kappa _T^{-1}(\Lambda
_\Phi )$ of the real phase space $T^*\Omega $ (locally equal to $\kappa _T^{-1}(\Lambda _{\Phi _0})$). See \cite{Sj82, Sj80c}.

\medskip
\noindent
In the theory of scattering poles (resonances) and other branches of spectral theory for non-self-adjoint (pseudo-)differential operators,
many works rely on phase space deformations which are now global. Since this activity started later we simply refer to some of
the works which also include some of those devoted to other global questions: \cite{44}--\cite{BoFuRaZe11}.

\end{document}